\DeclareFontFamily{OT1}{rsfs}{}
\DeclareFontShape{OT1}{rsfs}{n}{it}{<-> rsfs10}{}
\DeclareMathAlphabet{\mathscr}{OT1}{rsfs}{n}{it}
\newcommand{\C}{\mathbb{C}}
\newcommand{\R}{\mathbb{R}}
\newcommand{\Q}{\mathbb{Q}}
\newcommand{\bR}{{\mathbb R}}
\newcommand{\cO}{\mathcal{O}}
\newcommand{\cG}{\mathcal{G}}
\newcommand{\fX}{\mathfrak{X}}
\newcommand{\Ga}{\mathrm{Gal}}
\newtheorem{thm}{Theorem}[section]
\newtheorem{lemma}[thm]{Lemma}
\newtheorem{prop}[thm]{Proposition}
\newtheorem{cor}[thm]{Corollary}
\newcommand{\sG}{\mathscr{G}}
\newcommand{\Z}{\mathbb{Z}}
\newcommand{\sC}{\mathscr{C}}
\begin{document}
\title[Isospectral locally symmetric spaces]{Generic elements in Zariski-dense subgroups and isospectral locally symmetric spaces}

\author[Prasad]{Gopal Prasad}
\author[Rapinchuk]{Andrei S. Rapinchuk}

\address{Department of Mathematics, University of Michigan, Ann
Arbor, MI 48109}

\email{gprasad@umich.edu}

\address{Department of Mathematics, University of Virginia,
Charlottesville, VA 22904}

\email{asr3x@virginia.edu}

\begin{abstract}
The article contains a survey of our results on length-commensurable and
isospectral locally symmetric spaces and of related problems in the
theory of semi-simple algebraic groups. We discuss some of the techniques
involved in this work (in particular, the existence of generic tori
in semi-simple algebraic groups over finitely generated fields and of generic
elements in finitely generated Zariski-dense subgroups) as well as
some open problems. The article is an expanded version of the talk
given at the workshop by the first-named author.
\end{abstract}

\maketitle

\section{Introduction}\label{S:Intro}
\vskip2mm

The object of the talk delivered at the workshop by the first-named
author was, and of this article is, to give an exposition of recent
results on isospectral and length-commensurable locally symmetric
spaces associated with simple real algebraic groups (\cite{PR-IHES},
\cite{PR-Fields}) and related problems in the theory of semi-simple
algebraic groups (\cite{Gar}, \cite{GarR}, and \cite{PR-invol}). One
of the goals of our paper \cite{PR-IHES} was to study the problem
beautifully formulated by Mark Kac in \cite{Kac} as ``{\it Can one
hear the shape of a drum?}''  for the quotients of symmetric spaces
of the groups of real points of absolutely simple real algebraic
groups by cocompact arithmetic subgroups. A precise mathematical
formulation of Kac's question is {\it whether two compact Riemannian
manifolds which are isospectral} (i.e., have equal spectra --
eigenvalues and multiplicities -- for the Laplace-Beltrami operator)
{\it are necessarily isometric}. In general, the answer to this
question is in the negative as was shown by John Milnor \cite{Milnor}
already in 1964 by constructing two non-isometric isospectral flat
tori of dimension 16. Later M.-F.\,Vigneras \cite{Vig} used
arithmetic properties of quaternion algebras to produce examples of
arithmetically defined isospectral, but not isometric, Riemann
surfaces. On the other hand, T.\,Sunada \cite{Sun}, inspired by a
construction of nonisomorphic number fields with the same Dedekind
zeta-function, proposed a general and basically purely
group-theoretic method of producing nonisometric isospectral
Riemannian manifolds which has since then been used in various ways.
It is important to note, however, that the nonisometric isospectral
manifolds constructed by Vigneras and Sunada are {\it
commensurable}, i.e. have a common finite-sheeted cover. This
suggests that one should probably settle for the following weaker
version of Kac's original question: {\it Are any two isospectral
compact Riemannian manifolds necessarily commensurable?} The answer
to this modified question is still negative in the general case:
Lubotzky, Samuels and Vishne \cite{LSV}, using the Langlands
correspondence, have constructed examples of noncommensurable
isospectral locally symmetric spaces associated with absolutely
simple real groups of type $\textsf{A}_{n}$ (cf.\,Problem
\ref{S:Prob}.7 in \S \ref{S:Prob}). Nevertheless, it turned out that
the answer is actually in the affirmative for several classes of
locally symmetric spaces. Prior to our paper \cite{PR-IHES}, this
was known to be the case only for  the following two classes:
arithmetically defined Riemann surfaces \cite{Reid} and
arithmetically defined hyperbolic 3-manifolds \cite{CHLR}.

\vskip 2mm

In \cite{PR-IHES},  we used Schanuel's conjecture from
transcendental number theory (for more about this conjecture, and
how it comes up in our work, see below) and the results of
\cite{Gar}, \cite{PR-invol} to prove that any two compact {\it
isospectral} arithmetically defined locally symmetric spaces
associated with absolutely simple real algebraic groups of type
other than $\textsf{A}_{n}$ $(n > 1)$, $\textsf{D}_{2n + 1}$ $(n >
1)$, or $\textsf{E}_6$ are {\it necessarily commensurable}. One of
the important ingredients of the proof is the connection between
isospectrality and another property of Riemannian manifolds called
{\it iso-length-spectrality}. More precisely, for a Riemannian
manifold $M$ we let $L(M)$ denote the {\it weak length spectrum} of
$M$, i.e. the collection of the lengths of all closed geodesics in
$M$ (note that for the existence of a ``nice" Laplace spectrum, $M$
is required to be compact, but the weak length spectrum $L(M)$ can
be considered for {\it any} $M$ (i.e., we do not need to assume that
$M$ is compact)). Then two Riemannian manifolds $M_1$ and $M_2$ are
called {\it iso-length spectral} if $L(M_1) = L(M_2)$. It was first
proved by Gangolli \cite{Gang} in the rank one case, and then by
Duistermaat and Guillemin \cite{DG} and Duistermaat, Kolk and
Varadarajan \cite{DKV} in the general case (see Theorem 10.1 in
\cite{PR-IHES}) that  any two compact isospectral locally symmetric
spaces are  iso-length spectral. So, the emphasis in \cite{PR-IHES}
is really on the analysis of iso-length spectral locally symmetric
spaces $M_1$ and $M_2$. In fact, we prove our results under the much
weaker assumption of {\it length-commensurability}, which means that
$\Q \cdot L(M_1) = \Q \cdot L(M_2)$. (The set $\Q \cdot L(M)$ is
sometimes called the {\it rational length spectrum} of $M$; its
advantage, particularly in the analysis of  questions involving
commensurable manifolds, is that it is invariant under passing to a
finite-sheeted cover -- this property fails for the Laplace spectrum
or the length spectrum. At the same time, $\Q \cdot L(M)$ can
actually be computed in at least some cases, while precise
computation of $L(M)$ or the Laplace spectrum is not available for
any compact locally symmetric space at this point.) The notion of
length-commensurability was introduced in \cite{PR-IHES}, and the
investigation of its qualitative and quantitative consequences for
general locally symmetric spaces is an ongoing project. For
arithmetically defined spaces, however, the main questions were
answered in \cite{PR-IHES}, and we would like to complete this
introduction by showcasing the results for arithmetic hyperbolic
spaces.

\vskip2mm

Let $\mathbb{H}^n$ be the real hyperbolic $n$-space. By an
arithmetically defined real hyperbolic $n$-manifold we mean the quotient
$\mathbb{H}^n/\Gamma$, where $\Gamma$ is an arithmetic subgroup of
${\mathrm {PSO}}(n , 1)$ (which is the isometry group of
$\mathbb{H}^n$); see \S \ref{S:WC} regarding the notion of
arithmeticity.
\begin{thm}\label{T:Hyper}
{\rm (cf.\,\cite[Corollary 8.17 and Remark 8.18]{PR-IHES})} Let
$M_1$ and $M_2$ be arithmetically defined real hyperbolic $n$-manifolds.

If $n \not\equiv 1\,(\mathrm{mod}\: 4)$, then in case $M_1$ and
$M_2$ are not commensurable, after a possible interchange of $M_1$
and $M_2$, there exists $\lambda_1 \in L(M_1)$ such that for any
$\lambda_2 \in L(M_2)$, the ratio $\lambda_1/\lambda_2$ is
transcendental over $\Q$. (Thus, for such $n$ the
length-commensurability, and hence isospectrality,  of $M_1$ and
$M_2$ implies their commensurability.)

On the contrary, for any $n \equiv 1(\mathrm{mod}\: 4)$, there exist
$M_1$ and $M_2$ as above that are length-commensurable, but not
commensurable.
\end{thm}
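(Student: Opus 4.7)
The plan is to reduce the theorem to the algebraic framework linking closed geodesics with eigenvalues of hyperbolic elements, and then to invoke the main results summarized in \cite{PR-IHES}. Recall that for an arithmetic lattice $\Gamma \subset \mathrm{PSO}(n,1)$, a closed geodesic of $\mathbb{H}^n/\Gamma$ corresponds to the conjugacy class of a semi-simple element $\gamma \in \Gamma$ and has length of the form $c^{-1}\log|\mu(\gamma)|$, where $\mu(\gamma)$ is the dominant eigenvalue of $\gamma$ in the standard representation. Thus $L(M)$, up to a fixed normalization, is the set of logarithms $\log|\mu(\gamma)|$ as $\gamma$ ranges over the hyperbolic elements of $\Gamma$.

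For the first assertion, note that $\mathrm{PSO}(n,1)$ is of type $\textsf{B}_{n/2}$ when $n$ is even and of type $\textsf{D}_{(n+1)/2}$ when $n$ is odd; the excluded congruence $n \equiv 1 \pmod 4$ is precisely the case $\textsf{D}_{2k+1}$, so outside it we land in a type to which the main commensurability theorem of \cite{PR-IHES} applies. I would use the existence of generic semi-simple elements in finitely generated Zariski-dense subgroups (one of the central themes of this survey) to produce a $\gamma_1 \in \Gamma_1$ whose centralizer $T_1$ is a maximal $\bQ$-torus of the ambient algebraic group $\mathbf{G}_1$ with the largest possible splitting field and Galois group equal to the full Weyl group. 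Supposing, for contradiction, that for every $\lambda_2 \in L(M_2)$ the ratio $\lambda_1/\lambda_2$ attached to $\gamma_1$ is algebraic, I would apply Schanuel's conjecture to convert this into nontrivial multiplicative relations among the eigenvalues of $\gamma_1$ and eigenvalues of elements of $\Gamma_2$; these relations force a maximal $\bQ$-torus $T_2 \subset \mathbf{G}_2$ to have the same splitting field and isomorphic character lattice as $T_1$. Running this construction over sufficiently many generic $\gamma_1$ produces weak commensurability of $\Gamma_1$ and $\Gamma_2$, and the main theorem of \cite{PR-IHES} then upgrades weak commensurability to commensurability in the non-exceptional types, contradicting our hypothesis; hence some $\lambda_1$ with the required transcendence property must exist in $L(M_1)$ or $L(M_2)$.

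For the converse when $n \equiv 1 \pmod 4$, I would construct explicit non-commensurable yet length-commensurable pairs by exploiting the fact that type $\textsf{D}_{2k+1}$ admits a non-trivial outer automorphism that permutes inner forms while preserving the $k$-isomorphism classes of maximal tori. Concretely, working over a suitable totally real number field $k$, I would take two quadratic forms $f_1, f_2$ of signature $(n,1)$ at a chosen archimedean place and positive definite at the others, arranged so that the associated spinor groups $\mathbf{G}_1, \mathbf{G}_2$ are not $k$-isomorphic yet every maximal $k$-torus of $\mathbf{G}_1$ is $k$-isomorphic to one of $\mathbf{G}_2$ and vice versa. Taking cocompact arithmetic subgroups $\Gamma_i \subset \mathbf{G}_i(k)$, the resulting manifolds $M_i = \mathbb{H}^n/\Gamma_i$ are not commensurable (because $\mathbf{G}_1 \not\cong_k \mathbf{G}_2$) but each eigenvalue of a semi-simple element of $\Gamma_1$ is matched, up to raising to a rational power and Galois conjugation, by one from $\Gamma_2$, yielding $\bQ \cdot L(M_1) = \bQ \cdot L(M_2)$.

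The principal obstacle will be the transcendence step in part one: converting algebraic relations between a single length $\lambda_1$ and \emph{all} lengths of $M_2$ into structural information about tori requires a delicate application of Schanuel's conjecture coupled with a combinatorial argument exploiting the genericity of $\gamma_1$ to rule out sporadic eigenvalue coincidences. The subtlety in part two lies in verifying that two spinor groups arising from inequivalent quadratic forms really can share all torus-type data over $k$; this is precisely the mechanism that obstructs the commensurability theorem from extending to types $\textsf{A}_n$, $\textsf{D}_{2k+1}$, and $\textsf{E}_6$.
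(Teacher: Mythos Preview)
Your overall architecture is right in spirit --- reduce to weak commensurability and invoke the classification results of \cite{PR-IHES} --- but the execution of part one has two genuine problems, and the logical flow is inverted relative to the paper's argument.

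First, and most importantly, you invoke Schanuel's conjecture. Real hyperbolic $n$-manifolds are quotients of a \emph{rank one} symmetric space, and the paper stresses (see \S2.3 and the opening of \S\ref{S:WC2}) that in the rank one case the passage from length ratios to weak commensurability needs only the Gel'fond--Schneider theorem, so the result is \emph{unconditional}. Concretely: since $G_1 = G_2 = \mathrm{PSO}(n,1)$, the constants $m_1 = m_2$ in formula (\ref{E:lambda2}) coincide, so the ratio $\lambda_1/\lambda_2$ equals $\log\chi_1(\gamma_1)/\log\chi_2(\gamma_2)$ with the $\chi_i(\gamma_i)$ algebraic (arithmetic lattices have property~(A)). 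Gel'fond--Schneider then says this ratio is either rational or transcendental; it is rational precisely when $\gamma_1$ and $\gamma_2$ are weakly commensurable. No Schanuel is needed, and using it obscures the point of the theorem.

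Second, your contradiction setup is logically off and the argument is run backwards. You fix a single generic $\gamma_1$, suppose that \emph{every} ratio $\lambda_1/\lambda_2$ is algebraic, and try to manufacture weak commensurability of $\Gamma_1$ and $\Gamma_2$ from this. But (i) the negation of ``for all $\lambda_2$ the ratio is transcendental'' is ``\emph{some} ratio is algebraic,'' not ``every ratio is algebraic''; and (ii) even granting your hypothesis, you would only conclude that $\gamma_1$ is weakly commensurable to various elements of $\Gamma_2$ --- ``running over sufficiently many generic $\gamma_1$'' does not yield weak commensurability of $\Gamma_1$ and $\Gamma_2$, which by definition requires \emph{every} semi-simple element of infinite order in each group to match one in the other. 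The paper's route is the clean contrapositive: if $M_1$ and $M_2$ are not commensurable, then by Theorem~\ref{T:WC4} (applicable since $n \not\equiv 1 \pmod 4$ puts us in type $\textsf{B}_{n/2}$ or $\textsf{D}_{2k}$) the groups $\Gamma_1$ and $\Gamma_2$ are not weakly commensurable; hence, after possibly swapping indices, some semi-simple $\gamma_1 \in \Gamma_1$ of infinite order is weakly commensurable to no $\gamma_2 \in \Gamma_2$; then Gel'fond--Schneider gives transcendence of every ratio $\lambda_1/\lambda_2$ for the corresponding $\lambda_1$.

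For the $n \equiv 1 \pmod 4$ direction your outline is in the right spirit. The paper points to the Galois-cohomological construction in \cite[\S9]{PR-IHES}, which for type $\textsf{D}_{2k+1}$ produces non-isomorphic $K$-forms $\mathscr{G}_1$, $\mathscr{G}_2$ having the same isomorphism classes of maximal $K$-tori; this, rather than an ad hoc pair of quadratic forms, is what guarantees $\Q\cdot L(M_1) = \Q\cdot L(M_2)$ while Proposition~\ref{P:WC1} ensures non-commensurability.
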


What is noteworthy is that there is no apparent geometric reason for
this dramatic distinction between the length-commensurability of
hyperbolic $n$-manifolds when $n \not\equiv 1(\mathrm{mod}\: 4)$ and
$n \equiv 1(\mathrm{mod}\: 4)$ -- in our argument the difference
comes from considerations involving Galois cohomology -- see Theorem
\ref{T:WC4} and subsequent comments.

Our general results for arithmetically defined length-commensurable
locally symmetric spaces (cf.\,\S \ref{S:WC2}) imply similar (but
not identical!) assertions for complex and quaternionic hyperbolic
manifolds. At the same time, one can ask about possible relations
between $L(M_1)$ and $L(M_2)$ (or between $\Q \cdot L(M_1)$ and $\Q
\cdot L(M_2)$) if $M_1$ and $M_2$ are {\it not}
length-commensurable. The results we will describe in \S
\ref{S:Fields} assert that if $\Q\cdot L(M_1)\ne \Q\cdot L(M_2)$,
then no polynomial-type relation between $L(M_1)$ and $L(M_2)$ can
ever exist; in other words, these sets are {\it very} different.
This is, for example, the case if $M_1$ and $M_2$ are hyperbolic
manifolds of finite volume having different dimensions!

\section{Length-commensurable locally symmetric spaces and weakly commensurable subgroups} \label{S:length}
\vskip2mm

\noindent {\bf 2.1.  Riemann surfaces.} Our analysis of
length-commensurability of locally symmetric spaces relies on a
purely algebraic relation between their fundamental groups which we
termed {\it weak commensurability}. It is  easiest to motivate this
notion by looking at the length-commensurability of Riemann
surfaces. In this discussion we will be using the realization of
$\mathbb{H}^2$ as the complex upper half-plane  with the standard
hyperbolic metric $ds^2 = y^{-2}(dx^2 + dy^2)$. The usual action of
${\mathrm{SL}}_2(\R)$ on $\mathbb{H}^2$ by fractional linear
transformations is isometric and allows us to identify
$\mathbb{H}^2$ with the symmetric space ${\mathrm{SO}}_2(\R)
\backslash {\mathrm{SL}}_2(\R)$. It is well-known that any compact
Riemann surface $M$ of genus $> 1$ can be obtained as a quotient of
$\mathbb{H}^2$ by a discrete subgroup $\Gamma \subset
{\mathrm{SL}}_2(\R)$ with torsion-free image in
${\mathrm{PSL}}_2(\R)$. Now, given  any such subgroup $\Gamma$, we
let $\pi \colon \mathbb{H}^2 \to \mathbb{H}^2/\Gamma =: M$ denote
the canonical projection. It is easy to see that
$$
t \mapsto e^t i = i \cdot \left( \begin{array}{cc} e^{t/2} & 0
\\ 0 & e^{-t/2} \end{array} \right), \ \ \ t \in \R,
$$
is a unit-velocity parametrization of a geodesic $c$ in
$\mathbb{H}^2$. So, if $\gamma = \mathrm{diag}(t_{\gamma} ,
t^{-1}_{\gamma}) \in \Gamma$ then the image $\pi(c)$ is a {\it
closed geodesic} $c_{\gamma}$ in $M$ whose length is given by the
formula
\begin{equation}\label{E:length}
\ell_{\Gamma}(c_{\gamma}) = \frac{2}{n_{\gamma}} \cdot \log
t_{\gamma}
\end{equation}
(assuming that $t_{\gamma} > 1$), where $n_{\gamma}$ is an integer
$\geqslant 1$ (winding number in case $c_{\gamma}$ is not
primitive). Generalizing this construction, one shows that every
semi-simple element $\gamma \in \Gamma \setminus \{ \pm 1 \}$ gives
rise to a closed geodesic $c_{\gamma}$ in $M$ whose length is given
by (\ref{E:length}) where $t_{\gamma}$ is the eigenvalue of $\pm
\gamma$ which is $> 1$, and conversely, any closed geodesic in
$M$ is obtained this way. As a result,
$$
\Q \cdot L(M) = \Q \cdot \{ \log t_{\gamma} \: \vert \: \gamma \in
\Gamma \backslash \{\pm 1 \} \ \ \text{semi-simple} \}.
$$
Now, suppose we have two quotients $M_1 = \mathbb{H}^2/\Gamma_1$ and
$M_2 = \mathbb{H}^2/\Gamma_2$ as above, and let $c_{\gamma_i}$ be a
closed geodesic in $M_i$ for $i = 1, 2$. Then
$$
\ell_{\Gamma_1}(c_{\gamma_1}) / \ell_{\Gamma_2}(c_{\gamma_2}) \: \in
\: \Q \ \ \ \Leftrightarrow \ \ \ \exists \: m , n \in \mathbb{N} \
\ \text{such that} \ \ t^m_{\gamma_1} = t^n_{\gamma_2},
$$
or equivalently, the subgroups generated by the eigenvalues of
$\gamma_1$ and $\gamma_2$ have nontrivial intersection. This leads
us to the following.

\vskip2mm

\noindent {\bf 2.2. Definition.} Let $G_1 \subset \mathrm{GL}_{N_1}$ and
$G_2 \subset \mathrm{GL}_{N_2}$ be two semi-simple algebraic groups
defined over a field $F$ of characteristic zero.

\vskip1mm

\noindent (a) Semi-simple elements $\gamma_1 \in G_1(F)$ and
$\gamma_2 \in G_2(F)$ are said to be {\it weakly commensurable} if
the subgroups of $\overline{F}^{\times}$ generated by their
eigenvalues intersect nontrivially.

\vskip1mm

\noindent (b) (Zariski-dense) subgroups $\Gamma_1 \subset G_1(F)$
and $\Gamma_2 \subset G_2(F)$ are {\it weakly commensurable} if
every semi-simple element $\gamma_1 \in \Gamma_1$ of infinite order
is weakly commensurable to some semi-simple element $\gamma_2 \in
\Gamma_2$ of infinite order, and vice versa.

\vskip2mm

It should be noted that in \cite{PR-IHES} we gave a more technical,
but equivalent, definition of weakly commensurable elements, viz. we
required that there should exist maximal $F$-tori $T_i$ of $G_i$ for
$i = 1, 2$ such that $\gamma_i \in T_i(F)$ and for some characters
$\chi_i \in X(T_i)$ we have
$$
\chi_1(\gamma_1) = \chi_2(\gamma_2) \neq 1.
$$
This (equivalent) reformulation of (a) immediately demonstrates that
the notion of weak commensurability does not depend on the choice of
matrix realizations of the $G_i$'s, and more importantly, is more
convenient for the proofs of our results.

The above discussion of Riemann surfaces implies that if two Riemann
surfaces $M_1 = \mathbb{H}^2/\Gamma_1$ and $M_2 =
\mathbb{H}^2/\Gamma_2$ are length-commensurable, then the
corresponding fundamental groups $\Gamma_1$ and $\Gamma_2$ are
weakly commensurable. Our next goal is to explain why this
implication remains valid for general locally symmetric spaces.

\vskip2mm

\noindent {\bf 2.3. Length-commensurability and weak commensurability:\:the general case.}
\vskip2mm

First, we need to fix some notations related to general locally
symmetric spaces. Let $G$ be a connected adjoint real semi-simple
algebraic group, let $\mathcal{G} = G(\R)$ considered as a real Lie
group, and let $\fX = \mathcal{K} \backslash \mathcal{G}$, where
$\mathcal{K}$ is a maximal compact subgroup of $\mathcal{G}$, be the
associated symmetric space endowed with the Riemannian metric coming
from the Killing form on the Lie algebra $\mathfrak{g}$ of
$\mathcal{G}$. Furthermore, given a torsion-free discrete subgroup
$\Gamma$ of $\mathcal{G}$, we let $\fX_{\Gamma} = \fX/\Gamma$ denote
the corresponding locally symmetric space. Just as in the case of
Riemann surfaces, to any nontrivial semi-simple element $\gamma \in
\Gamma$ there corresponds a closed geodesic $c_{\gamma}$ whose
length is given by
$$
\ell_{\Gamma}(c_{\gamma}) = \frac{1}{n_{\gamma}} \cdot
\lambda_{\Gamma}(\gamma),
$$
where $n_{\gamma}$ is an integer $\geqslant 1$ and
\begin{equation}\label{E:lambda}
\lambda_{\Gamma}(\gamma)^2 := \sum_{\alpha} (\log \vert
\alpha(\gamma) \vert )^2,
\end{equation}
with the summation running over all roots of $G$ with respect to a
fixed maximal $\R$-torus $T$ of $G$ whose group of $\R$-points contains
 $\gamma$ ($\ell_{\Gamma}(c_{\gamma})$ is thus a {\it submultiple} of
$\lambda_{\Gamma}(\gamma)$). This formula looks much more
intimidating than (\ref{E:length}), so in order to make it more
manageable we first make the following observation. Of course, the
$\R$-torus $T$ may not be $\R$-split, so not every root $\alpha$ may
be defined over $\R$. However,
$$
\vert \alpha(\gamma) \vert^2 = \chi(\gamma)
$$
where $\chi = \alpha + \overline{\alpha}$ (with $\overline{\alpha}$ being the
conjugate character in terms of the natural action of $\Ga(\C/\R)$
on $X(T)$, and, as usual, $X(T)$ is viewed as an additive group) is a  character defined over $\R$
and which takes positive values on $T(\R)$. Such
characters will be called {\it positive}. So, we can now re-write
(\ref{E:lambda}) in the form
\begin{equation}\label{E:positive}
\lambda_{\Gamma}(\gamma)^2 = \sum_{i = 1}^p s_i (\log
\chi_i(\gamma))^2
\end{equation}
where $\chi_1, \ldots , \chi_p$ are certain positive characters of
$T$ and $s_1, \ldots , s_p$ are positive rational numbers whose
denominators are divisors of 4. The point to be made here is that
the subgroup $P(T) \subset X(T)$ of positive characters may be
rather small. More precisely, $T$ is an almost direct product of an
$\R$-anisotropic subtorus $A$ and an $\R$-split subtorus $S$. Then
any character of $T$ which is defined over $\R$ vanishes on $A$. This easily implies
that the restriction map yields an embedding $P(T) \hookrightarrow
X(S)_{\R} = X(S)$ with finite cokernel; in particular, the rank of
$P(T)$ as an abelian group coincides with the $\R$-rank
$\mathrm{rk}_{\R}\: T$ of $T$.

Before formulating our results,  we define the following property.
Let $G \subset \mathrm{GL}_N$ be a semi-simple algebraic group
defined over a field $F$ of characteristic zero. We say that a
(Zariski-dense) subgroup $\Gamma \subset G(F)$ has {\it property
$(A)$} if for any semi-simple element $\gamma \in \Gamma$, all the
eigenvalues of $\gamma$ lie in the field of algebraic numbers
$\overline{\Q}$ (note that the latter is equivalent to the fact that
for any maximal $F$-torus $T$ of $G$ containing $\gamma$ and any
character $\chi \in X(T)$, we have $\chi(\gamma) \in
\overline{\Q}^{\times}$ -- this reformulation shows, in particular,
that this property does not depend on the choice of a matrix
realization of $G$). Of course, this property automatically holds if
$\Gamma$ is arithmetic, or more generally, if $\Gamma$ can be
conjugated into ${\mathrm{SL}}_N(K)$ for some number field $K$.

\vskip2mm
Let us now consider the rank one case first.
\vskip1mm

{\it The rank one case.} Suppose $\mathrm{rk}_{\R}\: G = 1$ (the
examples include the adjoint groups of $\mathrm{SO}(n , 1)$,
$\mathrm{SU}(n , 1)$ and $\mathrm{Sp}(n , 1)$; the corresponding
symmetric spaces are respectively the real, complex and quaternionic
hyperbolic $n$-spaces). Then given a nontrivial semi-simple element $\gamma \in
\Gamma$, for any maximal $\R$-torus $T$ of $G$ containing $\gamma$
we have $\mathrm{rk}_{\R}\: T = 1$, which implies that the group
$P(T)$ of positive characters is cyclic and is generated, say, by
$\chi$. Then it follows from (\ref{E:positive}) that
\begin{equation}\label{E:lambda2}
\lambda_{\Gamma}(\gamma) = \frac{\sqrt{m}}{2} \cdot \vert \log
\chi(t) \vert
\end{equation}
where $m$ is some integer $\geqslant 1$ depending only on $G$; note
that this formula is still in the spirit of (\ref{E:length}), but
potentially involves some irrationality which can complicate the
analysis of length-commensurability.

Now, suppose that $G_1$ and $G_2$ are two simple algebraic
$\R$-groups of $\R$-rank one. For $i = 1, 2$, let $\Gamma_i \subset
G_i(\R) = \mathcal{G}_i$ be a discrete torsion-free subgroup
having property (A). Given a nontrivial semi-simple element
$\gamma_i \in \Gamma_i$, we pick a maximal $\R$-torus $T_i$ of $G_i$
whose group of $\R$-points contains $\gamma_i$ and let $\chi_i$ be a generator of the group
of positive characters $P(T_i)$. Then according to
(\ref{E:lambda2}),
$$
\lambda_{\Gamma_1}(\gamma_1) = \frac{\sqrt{m_1}}{2} \cdot \vert \log
\chi_1(\gamma_1) \vert \ \ \ \text{and} \ \ \
\lambda_{\Gamma_2}(\gamma_2) = \frac{\sqrt{m_2}}{2} \cdot \vert \log
\chi_2(\gamma_2) \vert
$$
for some integers $m_1 , m_2 \geqslant 1$. By a theorem proved
independently by Gel'fond and Schneider in 1934 (which settled
Hilbert's seventh problem -- cf.\,\cite{Baker}), the ratio
$$
\frac{\log \chi_1(\gamma_1)}{\log \chi_2(\gamma_2)}
$$
is either rational or transcendental. This result implies that the
ratio $\ell_{\Gamma_1}(\gamma_1)/\ell_{\Gamma_2}(\gamma_2)$, or
equivalently, the ratio
$\lambda_{\Gamma_1}(\gamma_1)/\lambda_{\Gamma_2}(\gamma_2)$ can be
rational only if
$$
\chi_1(\gamma_1)^{n_1} = \chi_2(\gamma_2)^{n_2}
$$
for some nonzero integers $n_1 , n_2$, which makes the elements
$\gamma_1$ and $\gamma_2$ weakly commensurable. (Of course, we get
this conclusion without  using the theorem of Gel'fond-Schneider if $G_1 =
G_2$, hence $m_1 = m_2$.) This argument shows that the
length-commensurability of $\fX_{\Gamma_1}$ and $\fX_{\Gamma_2}$
implies the weak commensurability of $\Gamma_1$ and $\Gamma_2$.

Finally, we recall that if $G \subset \mathrm{GL}_N$ is an
absolutely simple real algebraic group not isomorphic to
$\mathrm{PGL}_2$ then any lattice $\Gamma \subset G(\R)$ can be
conjugated into ${\mathrm{SL}}_N(K)$ for some number field $K$
(cf.\,\cite[7.67 and 7.68]{Rag-book}), hence possesses property (A).
This implies that if $\fX_{\Gamma_1}$ and $\fX_{\Gamma_2}$ are rank
one locally symmetric spaces of finite volume then their
length-commensurability always implies the weak commensurability of
$\Gamma_1$ and $\Gamma_2$ except possibly in the following
situation: $G_1 = \mathrm{PGL}_2$ and $\Gamma_1$ cannot be
conjugated into $\mathrm{PGL}_2(K)$ for any number field $K \subset
\R$ while $G_2 \neq \mathrm{PGL}_2$ (in
\cite{PR-IHES} this was called the exceptional case $(\mathcal{E})$). Nevertheless,
the conclusion remains valid also in this case if one assumes the
truth of Schanuel's conjecture (see below) -- this
follows from our recent results \cite{PR-Fields}, which we will
discuss in \S \ref{S:Fields} (cf.\,Theorem \ref{T:Fields1}).

\vskip2mm

{\it The general case}. If $\mathrm{rk}_{\R}\: G > 1$ then  $p$ may
be $> 1$ in (\ref{E:positive}), hence $\lambda_{\Gamma}(\gamma)$,
generally speaking, is {\it not} a multiple of the logarithm of the
value of a positive character. Consequently, the fact that the
ratio $\lambda_{\Gamma_1}(\gamma_1)/\lambda_{\Gamma_2}(\gamma_2)$ is
a rational number does not imply {\it directly} that $\gamma_1$ and
$\gamma_2$ are weakly commensurable. While the implication
nevertheless {\it is} valid (under some natural technical
assumptions), it is hardly surprising now that the proof requires
some nontrivial information about the logarithms of the character
values. More precisely, our arguments in \cite{PR-IHES} and
\cite{PR-Fields} assume the truth of the following famous conjecture
in transcendental number theory (cf.\,\cite{Ax}).

\vskip2mm

\noindent {\bf 2.4. Schanuel's conjecture.} {\it If $z_1, \ldots , z_n
\in \C$ are linearly independent over $\Q$, then the transcendence
degree (over $\Q$) of the field generated by
$$
z_1, \ldots , z_n; \ e^{z_1}, \ldots , e^{z_n}
$$
is $\geqslant n$.}

\vskip2mm

In fact, we will only need the consequence of this conjecture that
for nonzero algebraic numbers $z_1, \ldots , z_n$, (any values of)
their logarithms $\log z_1, \ldots , \log z_n$ are algebraically
independent once they are linearly independent (over $\Q$). In order
to apply this statement in our situation, we first prove the
following elementary lemma.

\addtocounter{thm}{4}

\begin{lemma}\label{L:1}
Let $G_1$ and $G_2$ be two connected semi-simple real algebraic
groups. For $i = 1, 2$, let $T_i$ be a maximal $\R$-torus of $G_i$,
$\gamma_i \in T_i(\R)$ and let $\chi^{(i)}_1, \ldots ,
\chi^{(i)}_{d_i}$ be positive characters of $T_i$ such that the set
$$
S_i = \{ \log \chi^{(i)}_1(\gamma_i), \ldots , \log
\chi^{(i)}_{d_i}(\gamma_i) \} \subset \R
$$
is linearly independent over $\Q$. If $\gamma_1$ and $\gamma_2$ are
\emph{not} weakly commensurable then the set $S_1 \cup S_2$ is also
linearly independent.
\end{lemma}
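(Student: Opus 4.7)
\medskip

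\noindent \textbf{Proof proposal.} The plan is to argue by contraposition: assuming that $S_1 \cup S_2$ is linearly dependent over $\Q$, I will produce characters $\chi_i \in X(T_i)$ with $\chi_1(\gamma_1) = \chi_2(\gamma_2) \ne 1$, which by the equivalent reformulation of Definition~2.2(a) recorded just before the lemma exhibits $\gamma_1$ and $\gamma_2$ as weakly commensurable.

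So suppose $S_1 \cup S_2$ satisfies some nontrivial $\Q$-linear relation. Since each of $S_1$ and $S_2$ is itself linearly independent over $\Q$, any such relation must genuinely involve logarithms from \emph{both} sets; after moving the two contributions to opposite sides and clearing denominators, I obtain integers $m_j, n_k$ (not all zero on either side) with
$$
\sum_{j=1}^{d_1} m_j \log \chi^{(1)}_j(\gamma_1) \; = \; \sum_{k=1}^{d_2} n_k \log \chi^{(2)}_k(\gamma_2).
$$
Define $\chi_1 := \sum_j m_j \chi^{(1)}_j \in X(T_1)$ and $\chi_2 := \sum_k n_k \chi^{(2)}_k \in X(T_2)$. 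Because the $\chi^{(i)}_j$ are positive characters, $\chi^{(i)}_j(\gamma_i) > 0$, so the displayed identity may be exponentiated to give
$$
\chi_1(\gamma_1) \; = \; \chi_2(\gamma_2)
$$
as a (positive) real number.

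It remains to check that this common value is not equal to $1$. If it were, then $\sum_j m_j \log \chi^{(1)}_j(\gamma_1) = 0$, and the linear independence of $S_1$ over $\Q$ would force all $m_j = 0$; symmetrically, all $n_k = 0$. This would contradict the nontriviality of the original $\Q$-linear relation. Hence $\chi_1(\gamma_1) = \chi_2(\gamma_2) \ne 1$, and $\gamma_1, \gamma_2$ are weakly commensurable, completing the contraposition.

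The argument is essentially formal and I do not expect any real obstacle: the only delicate point is ensuring that the common character value avoids $1$, and this is exactly what the hypothesized linear independence of each individual set $S_i$ gives for free. Positivity of the $\chi^{(i)}_j(\gamma_i)$ is what makes passing between the additive (logarithmic) and multiplicative pictures automatic, so no branch-of-logarithm issue arises.
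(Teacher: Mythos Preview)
Your proof is correct and follows essentially the same approach as the paper's. The only difference is organizational: the paper argues by contradiction (assuming non-weak-commensurability and dependence of $S_1 \cup S_2$, it forces the common character value to equal $1$ and then derives that all coefficients vanish), whereas you argue by contraposition (from dependence you show the common value is $\neq 1$, hence weak commensurability). The underlying computation---form the integer combination of characters on each side, exponentiate, and invoke the individual linear independence of $S_1$ and $S_2$---is identical.
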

\begin{proof}
Assume the contrary. Then there exist integers $s_1, \ldots ,
s_{d_1}, t_1, \ldots , t_{d_2}$, not all zero, such that
$$
s_1 \log \chi^{(1)}_1(\gamma_1) + \cdots + s_{d_1} \log
\chi^{(1)}_{d_1}(\gamma_1) - t_1 \log \chi^{(2)}_1(\gamma_2) -
\cdots - t_{d_2} \log \chi^{(2)}_{d_2}(\gamma_2) = 0.
$$
Consider the following characters
$$
\psi^{(1)} := s_1\chi^{(1)}_1 + \cdots + s_{d_1}\chi^{(1)}_{d_1} \ \
\text{and} \ \ \psi^{(2)} := t_1 \chi^{(2)}_1 + \cdots + t_{d_2}
\chi^{(2)}_{d_2}
$$
of $T_1$ and $T_2$ respectively. Then $\psi^{(1)}(\gamma_1) =
\psi^{(2)}(\gamma_2)$, and hence
$$
\psi^{(1)}(\gamma_1) = 1 = \psi^{(2)}(\gamma_2)
$$
because $\gamma_1$ and $\gamma_2$ are not weakly commensurable. This
means that
$$
s_1 \log \chi^{(1)}_1(\gamma_1) + \cdots + s_{d_1} \log
\chi^{(1)}_{d_1}(\gamma_1) = 0 =  t_1 \log \chi^{(2)}_1(\gamma_2) +
\cdots + t_{d_2} \log \chi^{(2)}_{d_2}(\gamma_2),
$$
and therefore all the coefficients are zero because the sets $S_1$
and $S_2$ are linearly independent. A contradiction.
\end{proof}

We are now ready to connect length-commensurability with weak
commensurability.
\begin{prop}\label{P:LC-WC}
Let $G_1$ and $G_2$ be two connected semi-simple real algebraic
groups. For $i = 1, 2$, let $\Gamma_i \subset G_i(\R)$ be a subgroup
satisfying property (A). Assume that Schanuel's conjecture holds. If
semi-simple elements $\gamma_1 \in \Gamma_1$ and $\gamma_2 \in
\Gamma_2$ are not weakly commensurable then
$\lambda_{\Gamma_1}(\gamma_1)$ and $\lambda_{\Gamma_2}(\gamma_2)$
are algebraically independent over $\Q$.
\end{prop}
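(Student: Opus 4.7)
The plan is to re-express each $\lambda_{\Gamma_i}(\gamma_i)^2$ as a nontrivial rational quadratic form in logarithms of nonzero algebraic numbers, promote the hypothesis ``not weakly commensurable'' from linear independence (via Lemma~\ref{L:1}) to algebraic independence (via Schanuel), and finally deduce algebraic independence of $\lambda_{\Gamma_1}(\gamma_1)$ and $\lambda_{\Gamma_2}(\gamma_2)$ themselves by a purely commutative-algebraic argument.

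Concretely, for $i = 1, 2$ I fix a maximal $\bR$-torus $T_i$ of $G_i$ with $\gamma_i \in T_i(\bR)$ and invoke (\ref{E:positive}) to write
\[
\lambda_{\Gamma_i}(\gamma_i)^2 \;=\; \sum_{j=1}^{p_i} s^{(i)}_j \bigl(\log \chi^{(i)}_j(\gamma_i)\bigr)^2,
\]
with positive rationals $s^{(i)}_j$ and positive characters $\chi^{(i)}_j$ of $T_i$; property (A) gives $\chi^{(i)}_j(\gamma_i) \in \overline{\bQ}^{\times}$. From the finite collection $\{\log \chi^{(i)}_j(\gamma_i)\}_j$ I select a maximal $\bQ$-linearly independent subset $S_i$, kept \emph{as a subset} so that its elements remain logarithms of positive characters of $T_i$ evaluated at $\gamma_i$. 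Expressing each $\log \chi^{(i)}_j(\gamma_i)$ as a $\bQ$-linear combination of elements of $S_i$ and substituting, one obtains $\lambda_{\Gamma_i}(\gamma_i)^2 = Q_i(S_i)$ for a homogeneous quadratic $Q_i$ over $\bQ$; this form is nonzero as a polynomial as soon as $\lambda_{\Gamma_i}(\gamma_i) \neq 0$, which we may assume (the conclusion is vacuous otherwise, since $0$ is algebraic over $\bQ$). Because $\gamma_1$ and $\gamma_2$ are not weakly commensurable, Lemma~\ref{L:1} applies and yields that $S_1 \cup S_2$ is $\bQ$-linearly independent; since its elements are logarithms of nonzero algebraic numbers, the consequence of Schanuel's conjecture recalled above upgrades this to algebraic independence of $S_1 \cup S_2$ over $\bQ$.

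Enumerating $S_1 = \{w_1, \dots, w_a\}$ and $S_2 = \{v_1, \dots, v_b\}$ and setting $L := \bQ(w_1, \dots, w_a, v_1, \dots, v_b)$, which is purely transcendental over $\bQ$, we have $\lambda_{\Gamma_1}(\gamma_1)^2 \in \bQ[w_1, \dots, w_a]$ and $\lambda_{\Gamma_2}(\gamma_2)^2 \in \bQ[v_1, \dots, v_b]$, each a nonzero homogeneous polynomial of degree $2$ in algebraically independent variables, hence transcendental over $\bQ$. Because the subfields $\bQ(w_1, \dots, w_a)$ and $\bQ(v_1, \dots, v_b)$ are algebraically disjoint over $\bQ$ (their generators together form an algebraically independent set), $\lambda_{\Gamma_1}(\gamma_1)^2$ and $\lambda_{\Gamma_2}(\gamma_2)^2$ are algebraically independent over $\bQ$, and taking positive square roots preserves this. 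The main obstacle is the linear-independence step: Lemma~\ref{L:1} requires that each $S_i$ consist of logarithms of actual positive characters of $T_i$ at $\gamma_i$ rather than of arbitrary $\bQ$-linear combinations, so the selection above must be performed as a \emph{subset}; once this is done, Schanuel's conjecture supplies the transcendental content and the concluding commutative-algebra step is formal.
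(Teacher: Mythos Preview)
Your proof is correct and follows essentially the same route as the paper's: both select a $\Q$-basis for each set of logarithms from among the original positive-character values, apply Lemma~\ref{L:1} to merge the two bases into one linearly independent family, invoke Schanuel to upgrade to algebraic independence, and conclude by noting that each $\lambda_{\Gamma_i}(\gamma_i)^2$ is a nonzero homogeneous quadratic in its own block of variables. Your explicit remarks on the vacuous case $\lambda_{\Gamma_i}(\gamma_i)=0$ and on why the basis must be chosen as a \emph{subset} (so that Lemma~\ref{L:1} applies) are welcome clarifications, but the argument is the same.
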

\begin{proof}
It follows from (\ref{E:positive})  that
\begin{equation}\label{E:lambda4}
\lambda_{\Gamma_1}(\gamma_1)^2 = \sum_{i = 1}^p s_i(\log
\chi^{(1)}_i(\gamma_1))^2 \ \ \text{and} \ \
\lambda_{\Gamma_2}(\gamma_2)^2 = \sum_{i = 1}^q t_i (\log
\chi^{(2)}_i(\gamma_2))^2,
\end{equation}
where $s_i$ and $t_i$ are positive rational numbers, and
$\chi^{(1)}_i$ and $\chi^{(2)}_i$ are positive characters on maximal
$\R$-tori $T_1$ and $T_2$ of $G_1$ and $G_2$ whose groups of $\R$-points contain the
elements $\gamma_1$ and $\gamma_2$, respectively. After renumbering
the characters, we can assume that
$$
a_1 := \log \chi^{(1)}_1(\gamma_1), \ldots , a_{m} := \log
\chi^{(1)}_{m}(\gamma_1)
$$
(resp., $b_1 := \log \chi^{(2)}_1(\gamma_2), \ldots , b_n = \log
\chi^{(2)}_n(\gamma_2)$) form a basis of the $\Q$-subspace of $\R$
spanned by $\log \chi^{(1)}_i(\gamma_1)$ for $i \leqslant p$ (resp.,
by $\log \chi^{(2)}_i(\gamma_2)$ for $i \leqslant q$). It follows
from Lemma \ref{L:1} that the numbers
\begin{equation}\label{E:numbers}
a_1, \ldots , a_{m}; \ b_1, \ldots , b_n
\end{equation}
are linearly independent. By our assumption, $\Gamma_1$ and
$\Gamma_2$ possess property (A), so the character values
$\chi^{(j)}_i(\gamma_j)$ are all algebraic numbers. So, it follows
from Schanuel's conjecture that the numbers in (\ref{E:numbers}) are
algebraically independent over $\Q$. As is seen from (\ref{E:lambda4}),
$\lambda_{\Gamma_1}(\gamma_1)$ and
$\lambda_{\Gamma_2}(\gamma_2)$ are represented by nonzero
homogeneous polynomials of degree two, with rational coefficients,
in $a_1, \ldots , a_{m}$ and $b_1, \ldots , b_n$, respectively,
and therefore they are algebraically independent.
\end{proof}

This  proposition leads us to the following.

\begin{thm}\label{T:LC-WC}
Let $G_1$ and $G_2$ be two connected semi-simple real algebraic
groups. For $i = 1, 2$, let $\Gamma_i \subset G_i(\R)$ be a discrete
torsion-free subgroup having property (A). Assume that Schanuel's
conjecture holds. If $\Gamma_1$ and $\Gamma_2$ are not weakly
commensurable, then, possibly after reindexing, we can find $\lambda_1 \in L(\fX_{\Gamma_1})$ which is
algebraically independent from \emph{any} $\lambda_2 \in
L(\fX_{\Gamma_2})$. In particular, $\fX_{\Gamma_1}$ and
$\fX_{\Gamma_2}$ are not length-commensurable.
\end{thm}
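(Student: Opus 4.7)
The plan is to reduce the statement to a direct application of Proposition~\ref{P:LC-WC} by extracting a single witnessing element.

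First, I unfold the definition of weak commensurability of subgroups. Since $\Gamma_1$ and $\Gamma_2$ are not weakly commensurable, by definition one of the two quantifiers in Definition~2.2(b) fails; after possibly interchanging the indices, I may assume that there exists a semi-simple element $\gamma_1 \in \Gamma_1$ of infinite order with the property that $\gamma_1$ is not weakly commensurable to any semi-simple element $\gamma_2 \in \Gamma_2$ of infinite order. Since $\Gamma_1$ is torsion-free, $\gamma_1$ is automatically nontrivial, and hence gives rise to a closed geodesic $c_{\gamma_1}$ in $\fX_{\Gamma_1}$ with length
$$
\lambda_1 := \ell_{\Gamma_1}(c_{\gamma_1}) = \frac{1}{n_{\gamma_1}}\,\lambda_{\Gamma_1}(\gamma_1) \;\in\; L(\fX_{\Gamma_1}),
$$
i.e.\ a nonzero rational multiple of $\lambda_{\Gamma_1}(\gamma_1)$.

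Next I show that this $\lambda_1$ is the element we want. Let $\lambda_2 \in L(\fX_{\Gamma_2})$ be arbitrary; by the description of closed geodesics recalled in \S 2.3, we have $\lambda_2 = \ell_{\Gamma_2}(c_{\gamma_2}) = n_{\gamma_2}^{-1}\lambda_{\Gamma_2}(\gamma_2)$ for some nontrivial semi-simple $\gamma_2 \in \Gamma_2$, and this $\gamma_2$ necessarily has infinite order (again because $\Gamma_2$ is torsion-free; alternatively, a finite-order semi-simple element has all $|\alpha(\gamma_2)|=1$, forcing $\lambda_{\Gamma_2}(\gamma_2)=0$). By our choice of $\gamma_1$, the elements $\gamma_1$ and $\gamma_2$ are not weakly commensurable, so Proposition~\ref{P:LC-WC} (which uses Schanuel's conjecture and property (A), both available by assumption) yields that $\lambda_{\Gamma_1}(\gamma_1)$ and $\lambda_{\Gamma_2}(\gamma_2)$ are algebraically independent over $\Q$.

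Since algebraic independence over $\Q$ is preserved when each element is multiplied by a nonzero rational number, $\lambda_1$ and $\lambda_2$ are themselves algebraically independent; in particular $\lambda_1/\lambda_2 \notin \Q$, so $\fX_{\Gamma_1}$ and $\fX_{\Gamma_2}$ cannot be length-commensurable. There is really no hard step here once Proposition~\ref{P:LC-WC} is in hand --- the only place requiring minor care is the logical unpacking of ``subgroups are not weakly commensurable'' into the existence of a single semi-simple element $\gamma_1$ witnessing this failure, together with the observation that every closed geodesic in $\fX_{\Gamma_2}$ arises from a semi-simple element of infinite order, so that the proposition can indeed be applied to every $\lambda_2 \in L(\fX_{\Gamma_2})$.
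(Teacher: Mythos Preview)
Your argument is correct and is exactly the intended derivation: the paper does not spell out a proof of Theorem~\ref{T:LC-WC} but simply says ``This proposition leads us to the following'', and your unpacking of the negation of weak commensurability together with the observation that torsion-freeness forces every geodesic-producing $\gamma_2$ to have infinite order is precisely what makes Proposition~\ref{P:LC-WC} apply.
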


Combining this with the discussion above of property (A) for lattices and of
the exceptional case $(\mathcal{E})$, we obtain the following.

\begin{cor}\label{C:LC-WC}
Let $G_1$ and $G_2$ be two absolutely simple real algebraic groups,
and for $i = 1, 2$ let $\Gamma_i$ be a lattice in $G_i(\R)$ (so that
the locally symmetric space $\fX_{\Gamma_i}$ has finite volume). If
$\fX_{\Gamma_1}$ and $\fX_{\Gamma_2}$ are length-commensurable then
$\Gamma_1$ and $\Gamma_2$ are weakly commensurable.
\end{cor}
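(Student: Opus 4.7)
The plan is to argue by contraposition: assuming $\Gamma_1$ and $\Gamma_2$ are not weakly commensurable (and, as throughout this discussion, Schanuel's conjecture), I would deduce that $\fX_{\Gamma_1}$ and $\fX_{\Gamma_2}$ are not length-commensurable. The main engine is Theorem \ref{T:LC-WC}, whose hypotheses I need to verify in the present lattice setting. The only missing ingredient is that the lattices $\Gamma_i$ satisfy property (A), together with a separate treatment of the unique situation in which that verification can fail.

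The step of checking property (A) reduces to quoting the cited theorem of Raghunathan (\cite[7.67 and 7.68]{Rag-book}): for any absolutely simple real algebraic group $G_i \subset \mathrm{GL}_{N_i}$ not isomorphic to $\mathrm{PGL}_2$, any lattice in $G_i(\R)$ can be conjugated into $\mathrm{SL}_{N_i}(K)$ for some number field $K$, and hence satisfies property (A) automatically. Thus in all cases except when some $G_i$ equals $\mathrm{PGL}_2$ with a ``bad'' lattice, Theorem \ref{T:LC-WC} applies directly: after possibly swapping indices it produces some $\lambda_1 \in L(\fX_{\Gamma_1})$ algebraically independent over $\Q$ from every $\lambda_2 \in L(\fX_{\Gamma_2})$, which precludes any rational relation between individual lengths and hence rules out length-commensurability.

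The residual situation is the exceptional case $(\mathcal{E})$ recalled in the excerpt: up to reindexing, $G_1 = \mathrm{PGL}_2$ with $\Gamma_1$ not conjugable into $\mathrm{PGL}_2(K)$ for any number field $K \subset \R$, while $G_2 \neq \mathrm{PGL}_2$. (If instead $G_1 = G_2 = \mathrm{PGL}_2$, the rank-one formula (\ref{E:lambda2}) with common $m_1 = m_2$ yields weak commensurability directly from the rationality of a ratio of logarithms, with no need for Gel'fond--Schneider or property (A).) Case $(\mathcal{E})$ is the principal obstacle, since Theorem \ref{T:LC-WC} is genuinely inapplicable; my plan is to cite the refinement Theorem \ref{T:Fields1} of \cite{PR-Fields}, to be described in \S \ref{S:Fields}, which (still under Schanuel) covers precisely this configuration and supplies the same conclusion. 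Combining the generic case with this substitute for Theorem \ref{T:LC-WC} in case $(\mathcal{E})$ completes the proof.
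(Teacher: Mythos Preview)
Your proposal is correct and follows essentially the same route as the paper: the paper derives the corollary by ``combining [Theorem \ref{T:LC-WC}] with the discussion above of property (A) for lattices and of the exceptional case $(\mathcal{E})$,'' which is exactly your case split into (i) both lattices have property (A) via \cite[7.67, 7.68]{Rag-book}, (ii) $G_1 = G_2 = \mathrm{PGL}_2$ handled directly since $m_1 = m_2$, and (iii) the exceptional case $(\mathcal{E})$ dispatched via Theorem \ref{T:Fields1}. One small clarification worth making explicit: in case $(\mathcal{E})$, Theorem \ref{T:Fields1} actually shows (since $w_1 = 2 < w_2$) that the spaces are \emph{never} length-commensurable, so the implication holds vacuously there rather than by a genuine link to weak commensurability---but this is precisely how the paper uses it as well.
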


\vskip1mm

The results we discussed in this section shift the focus in the
analysis of length-commensurability and/or isospectrality of locally
symmetric spaces to that of weak commensurability of finitely
generated Zariski-dense subgroups of simple (or semi-simple)
algebraic  groups. In \S\ref{S:WC}, we will first present some basic
results dealing with the weak commensurability of such subgroups in
a completely general situation, one of which states that the mere
existence of such subgroups implies that the ambient algebraic
groups either are of the same type, or one of them is of type
$\textsf{B}_{n}$ and the other of type $\textsf{C}_{n}$ for some $n
\geqslant 3$ (cf.\,Theorem \ref{T:WC1}). We then turn to much more
precise results in the case where the algebraic groups are of the
same type and the subgroups are $S$-arithmetic (see \S
\ref{S:WC-Res}), and finally derive some geometric consequences of
these results (see \S \ref{S:WC2}). Next, \S\ref{S:BC} contains an
exposition of the recent results of Skip Garibaldi and the
second-named author \cite{GarR} that completely characterize weakly
commensurable $S$-arithmetic subgroups in the case where one of the
two groups is of type $\textsf{B}_{n}$ and the other is of type
$\textsf{C}_{n}$ $(n \geqslant 3)$. In \S \ref{S:Fields} we discuss
a more technical version of the notion of weak commensurability,
which enabled us to show in \cite{PR-Fields} (under  mild technical
assumptions) that if two arithmetically defined locally symmetric
spaces $M_1 = \fX_{\Gamma_1}$ and $M_2 = \fX_{\Gamma_2}$ are not
length-commensurable then the sets $L(M_1)$ and $L(M_2)$ (or $\Q
\cdot L(M_1)$ and $\Q \cdot (M_2)$) are {\it very} different. The
proofs of all these results  use the existence (first established in
\cite{PR-Reg}) of special elements, which we call {\it generic
elements}, in arbitrary finitely generated Zariski-dense subgroups;
we briefly review these and more recent results in this direction in
\S\ref{S:Gen} along with the results that relate the analysis of
weak commensurability with a problem of independent interest in the
theory of semi-simple algebraic groups of characterizing simple
$K$-groups having the same isomorphism classes of maximal $K$-tori
(cf. \S \ref{S:Tori}). Finally, in \S\ref{S:Prob} we discuss some
open problems.

\vskip2mm

\section{Two basic results implied by weak commensurability and the definition of arithmeticity}\label{S:WC}

\noindent Our next goal is to give an
account of the results from \cite{PR-IHES} concerning weakly
commensurable subgroups of semi-simple algebraic groups. We begin with the
following two theorems that provide the basic results about weak
commensurability of arbitrary finitely generated Zariski-dense
subgroups of semi-simple groups.
\begin{thm}\label{T:WC1}
Let $G_1$ and $G_2$ be two connected absolutely almost simple
algebraic groups defined over a field $F$ of characteristic zero.
Assume that there exist finitely generated Zariski-dense subgroups
$\Gamma_i$ of  $G_i(F)$ which are weakly commensurable. Then either
$G_1$ and $G_2$ are of the same Killing-Cartan type, or one of them
is of type $\textsf{B}_{n}$ and the other is of type
$\textsf{C}_{n}$ for some $n \geqslant 3$.
\end{thm}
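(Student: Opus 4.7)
The plan is to reduce Theorem \ref{T:WC1} to a Galois-theoretic comparison of Weyl groups acting on character lattices of maximal tori, using as a black box the existence of generic semi-simple elements in finitely generated Zariski-dense subgroups (the theorem from \cite{PR-Reg} that the authors will review in \S\ref{S:Gen}). Recall that by genericity of $\gamma \in \Gamma \subset G(F)$ I mean that $T := Z_G(\gamma)^{\circ}$ is a maximal $F$-torus and the natural homomorphism $\Ga(\overline{F}/F) \to \mathrm{Aut}(X(T))$ surjects onto the full Weyl group $W(G, T)$.

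First I would pick a generic semi-simple element $\gamma_1 \in \Gamma_1$ of infinite order, with centralizer torus $T_1$ and Weyl group $W_1 = W(G_1, T_1)$. By weak commensurability, there exist a semi-simple $\gamma_2 \in \Gamma_2$ of infinite order and characters $\chi_i \in X(T_i)$ (with $T_2$ some maximal $F$-torus of $G_2$ containing $\gamma_2$) such that $\chi_1(\gamma_1) = \chi_2(\gamma_2) =: \alpha \neq 1$. Applying the generic-elements theorem inside $\Gamma_2$ as well (and, if necessary, passing to a weakly commensurable replacement of $\gamma_2$), I can arrange that $\gamma_2$ is generic too, so that $\Ga(\overline{F}/F)$ acts on $X(T_2)$ through all of $W_2 := W(G_2, T_2)$.

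The heart of the proof is then Galois-theoretic. Let $E/F$ be a finite Galois extension that splits both $T_1$ and $T_2$ and contains $\alpha$, and set $\mcG = \Ga(E/F)$. For each $\sigma \in \mcG$,
\[
\sigma(\chi_1)(\gamma_1) \;=\; \sigma(\alpha) \;=\; \sigma(\chi_2)(\gamma_2),
\]
so as $\sigma$ ranges over $\mcG$ the two Weyl orbits $W_1 \cdot \chi_1 \subset X(T_1)$ and $W_2 \cdot \chi_2 \subset X(T_2)$ produce the same finite set of algebraic numbers when evaluated at $\gamma_1$ and $\gamma_2$ respectively. By choosing $\chi_1$ to be a root of $G_1$ relative to $T_1$, and repeatedly invoking weak commensurability on the elements of its Weyl orbit, one builds up enough character identities between $X(T_1)$ and $X(T_2)$ to compare the root systems $\Phi(G_1, T_1)$ and $\Phi(G_2, T_2)$. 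A classification-theoretic input---that two irreducible root systems of rank $\geqslant 2$ whose Weyl groups agree, as reflection groups on their ambient rational spans, are either of the same Killing--Cartan type or form a $\{\textsf{B}_n,\textsf{C}_n\}$ pair with $n \geqslant 3$---then yields the conclusion.

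The main obstacle will be extracting enough character identities from the weak-commensurability hypothesis to actually pin down the root system. A single pair $(\chi_1, \chi_2)$ only records one Galois-orbit coincidence; to reconstruct $\Phi(G_1, T_1)$ up to the $\textsf{B}_n/\textsf{C}_n$ duality one must combine Weyl-orbit matching with the fact that \emph{every} generic semi-simple element of $\Gamma_1$ has a weakly commensurable partner in $\Gamma_2$, and then invoke rigidity of irreducible root systems. The exclusion $n \geqslant 3$ is automatic, since $\textsf{B}_1 = \textsf{C}_1 = \textsf{A}_1$ and $\textsf{B}_2 = \textsf{C}_2$ are not distinguished as root-system types to begin with.
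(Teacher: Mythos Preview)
Your overall framework---invoke generic elements, use the Galois action on characters to compare Weyl-group data, and finish with the classification fact that $|W|$ determines the irreducible root system up to the $\textsf{B}_n/\textsf{C}_n$ ambiguity---is the same as the paper's. The paper states explicitly that the proof consists in showing $|W(G_1,T_1)| = |W(G_2,T_2)|$, after which the classification of root systems does the rest. So you have correctly identified both the main tool and the endgame.

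There is, however, a genuine gap in your execution. You write that ``applying the generic-elements theorem inside $\Gamma_2$ as well (and, if necessary, passing to a weakly commensurable replacement of $\gamma_2$), I can arrange that $\gamma_2$ is generic too.'' This step is not available. Weak commensurability only tells you that \emph{some} semi-simple $\gamma_2 \in \Gamma_2$ of infinite order is weakly commensurable to $\gamma_1$; it gives you no control over which $\gamma_2$ appears, and in particular no reason why the torus $T_2 \ni \gamma_2$ should be generic over $F$. There is no mechanism for ``replacing'' $\gamma_2$ by a generic element of $\Gamma_2$ while preserving the relation $\chi_1(\gamma_1) = \chi_2(\gamma_2)$. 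Consequently, your claim that the Galois orbit of $\chi_2$ in $X(T_2)$ is a full $W_2$-orbit is unjustified, and the subsequent ``Weyl-orbit matching'' does not get off the ground.

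The way around this is not to demand genericity on both sides, but to exploit it asymmetrically. With $\gamma_1$ generic, the torus $T_1$ is $K$-irreducible, so the Galois conjugates $\sigma(\chi_1)$ span $X(T_1)\otimes\Q$ and the evaluation map $\chi \mapsto \chi(\gamma_1)$ is injective on $X(T_1)$; this already forces strong constraints on the Galois orbit of $\alpha = \chi_1(\gamma_1) = \chi_2(\gamma_2)$ coming from the $G_1$-side. On the $G_2$-side one uses only that the Galois action on $X(T_2)$ lands in $\mathrm{Aut}(\Phi(G_2,T_2))$, which bounds the orbit from above. Playing these bounds off against each other, and then reversing the roles of $G_1$ and $G_2$ (picking a generic $\gamma_2' \in \Gamma_2$ and some weakly commensurable $\gamma_1' \in \Gamma_1$), yields $|W_1| = |W_2|$. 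The symmetric use of weak commensurability replaces the unattainable simultaneous genericity you were aiming for.
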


\vskip1mm

The way we prove this theorem is by showing that the Weyl groups of
$G_1$ and $G_2$ have the same order, as it is well-known that the
order of the Weyl group uniquely determines the type of the root
system, except for the ambiguity between $\textsf{B}_{n}$ and
$\textsf{C}_{n}$. On the other hand, groups $G_1$ and $G_2$ of types
$\textsf{B}_{n}$ and $\textsf{C}_{n}$ with $n > 2$ respectively, may
indeed contain weakly commensurable subgroups. This was first shown
in \cite[Example 6.7]{PR-IHES} using a cohomological construction
which we will briefly recall in \S \ref{S:BC}. Recently in
\cite{GarR} another explanation was given using commutative  \'etale
subalgebras of simple algebras with involution. We refer the reader
to \S \ref{S:BC} for this argument as well as a complete
characterization of weakly commensurable $S$-arithmetic subgroups in
the algebraic groups of types $\textsf{B}_{n}$ and $\textsf{C}_{n}$
(see Theorem \ref{T:BC1}).

\vskip1mm

\begin{thm}\label{T:WC2}
Let $G_1$ and $G_2$ be two connected absolutely almost simple
algebraic groups defined over a field $F$ of characteristic zero.
For $i =1,2$, let $\Gamma_i$ be a finitely generated Zariski-dense
subgroup of $G_i(F)$, and $K_{\Gamma_i}$ be the subfield of $F$
generated by the traces ${\mathrm{Tr}}\, \mathrm{Ad}\:\gamma$, in the
adjoint representation,  of  $\gamma \in \Gamma_i$. If $\Gamma_1$
and $\Gamma_2$ are weakly commensurable, then $K_{\Gamma_1} =
K_{\Gamma_2}.$
\end{thm}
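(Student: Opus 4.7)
The plan is to prove, by symmetry, that $K_{\Gamma_1}\subseteq K_{\Gamma_2}$; together with the reverse inclusion this gives the claim. Since the adjoint representation factors through the adjoint quotient, I may assume $G_1,G_2$ are adjoint. Traces of semisimple elements of infinite order already generate the trace field (Zariski-density of such elements inside $\Gamma_i$ follows from finite generation plus Zariski-density of $\Gamma_i$ and standard avoidance of the countably many non-generic subvarieties). So it suffices to show that $\Tr\Ad(\gamma_1)\in K_{\Gamma_2}$ for every regular semisimple $\gamma_1\in\Gamma_1$ of infinite order.

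Fix such a $\gamma_1$, let $T_1=Z_{G_1}(\gamma_1)^\circ$ be the maximal $F$-torus it determines, and recall that
\[
\Tr\Ad(\gamma_1)\;=\;\dim T_1 \;+\; \sum_{\alpha\in\Phi(G_1,T_1)}\alpha(\gamma_1).
\]
The roots (characters defined over $\bar F$) are permuted by $\Ga(\bar F/F)$, and each Galois-orbit sum $\sum_{\alpha\in O}\alpha(\gamma_1)$ lies in $F$. The task is therefore to show that each orbit sum lies in $K_{\Gamma_2}$, equivalently, that every $\sigma\in\Ga(\bar F/K_{\Gamma_2})$ permutes the multiset $\{\alpha(\gamma_1):\alpha\in\Phi(G_1,T_1)\}$.

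By weak commensurability there exists a semisimple $\gamma_2\in\Gamma_2$ of infinite order in some maximal $F$-torus $T_2\subset G_2$, together with characters $\chi_i\in X(T_i)$ satisfying $\chi_1(\gamma_1)=\chi_2(\gamma_2)\neq 1$. Applying $\sigma\in\Ga(\bar F/K_{\Gamma_2})$ to this equation, the right-hand side transforms as $\sigma(\chi_2(\gamma_2))=(\sigma\chi_2)(\gamma_2)$, which is again a character value of $\gamma_2$; hence $\sigma(\chi_1(\gamma_1))$ is a character value of $\gamma_2$. The key input here is a Vinberg-type interpretation of the trace field as the minimal field of definition of a form of $G_2$ containing (a conjugate of) $\Gamma_2$, which forces $T_2$ to descend to $K_{\Gamma_2}$ and so makes the Galois action on $X(T_2)$ compatible with $\sigma$; establishing this compatibility cleanly is a preliminary technical step I would need to set up carefully.

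The main obstacle is to upgrade the above to: \emph{every} character value of $\gamma_1$ is accounted for by a character value of $\gamma_2$, not just the one paired with $\chi_1$. A single weakly commensurable pair yields only one character equation. The remedy is to choose $\gamma_1$ to be a \emph{generic} semisimple element in the sense of \cite{PR-Reg} (see \S\ref{S:Gen}): for such $\gamma_1$, the $\Ga(\bar F/F)$-module $X(T_1)$ is generated by a single orbit, and the character values $\chi(\gamma_1)$ span a free abelian subgroup of $\bar F^\times$ of maximal rank. Combined with the Galois-equivariance noted above, this lets the single equation $\chi_1(\gamma_1)=\chi_2(\gamma_2)$ propagate across the entire Galois orbit of $\chi_1$, thereby controlling every orbital sum appearing in $\Tr\Ad(\gamma_1)$. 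Granting this, each orbit sum lies in $K_{\Gamma_2}$, so the trace does, and the symmetric argument gives $K_{\Gamma_1}=K_{\Gamma_2}$.
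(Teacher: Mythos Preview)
This article is a survey: Theorem~\ref{T:WC2} is stated without proof here, with the argument deferred to \cite{PR-IHES}. So there is no in-paper proof to compare against, and I comment on your proposal directly.

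Your ingredients---Vinberg's characterization of $K_{\Gamma_i}$ as a minimal field of definition, generic elements, and Galois action on character values---are indeed the ones used in \cite{PR-IHES}. But the argument as written has a genuine gap at precisely the step you flag with ``Granting this.'' You want every $\sigma\in\Ga(\bar F/K_{\Gamma_2})$ to permute the multiset $\{\alpha(\gamma_1):\alpha\in\Phi(G_1,T_1)\}$. For this you would need $\sigma(\alpha(\gamma_1))=(\sigma\cdot\alpha)(\gamma_1)$, which requires $T_1$ and $\gamma_1$ to be defined over $K_{\Gamma_2}$. Vinberg applied to $G_1$ gives you $T_1$ over $K_{\Gamma_1}$, not over $K_{\Gamma_2}$; for $\sigma$ not fixing $K_{\Gamma_1}$ there is no reason $\sigma(\alpha(\gamma_1))$ is a character value of $\gamma_1$ at all. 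Your single relation $\chi_1(\gamma_1)=\chi_2(\gamma_2)$ tells you only that $\sigma(\chi_1(\gamma_1))=(\sigma\chi_2)(\gamma_2)$ is a character value of $\gamma_2$---not that it is a root value of $\gamma_1$. Genericity of $T_1$ controls the $\Ga(\bar F/K_{\Gamma_1})$-orbit in $X(T_1)$, not the $\Ga(\bar F/K_{\Gamma_2})$-action, so invoking it does not close this gap. There is also a foundational problem: when $F/K_{\Gamma_2}$ is transcendental, membership in $K_{\Gamma_2}$ is not detected by invariance under $\Ga(\bar F/K_{\Gamma_2})$, so the whole Galois-descent framing needs to be reworked.

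The argument in \cite{PR-IHES} does not attempt to show individual traces are Galois-invariant in this way. Rather, one works over the compositum $K=K_{\Gamma_1}K_{\Gamma_2}$, chooses $\gamma_1$ generic over $K$ so that $T_1$ is $K$-irreducible and the subgroup of $\bar F^\times$ generated by its character values is free of rank $\dim T_1$, and then uses the Isogeny Theorem (Theorem~\ref{T:Gen4} here) to produce an isogeny $T_2\to T_1$ from the single weak-commensurability relation. The structural comparison of the tori---rather than an attempt to permute root values by an a~priori unknown Galois action---is what forces the two fields of definition to coincide.
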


\vskip2mm

We now turn to the results concerning weakly commensurable
Zariski-dense $S$-arithmetic subgroups, which are surprisingly
strong. In \S \ref{S:WC-Res} we will discuss the weak
commensurability of $S$-arithmetic subgroups in absolutely almost
simple algebraic groups $G_1$ and $G_2$ of the same type, postponing
the case where one of the groups is of type $\textsf{B}_{n}$ and the
other of type $\textsf{C}_{n}$ to \S \ref{S:BC}. Since our results
rely on a specific way of describing $S$-arithmetic subgroups  in
absolutely almost simple groups, we will discuss this issue first.

\vskip2mm

\noindent{\bf  3.3. The definition of arithmeticity:} Let $G$ be an
algebraic group defined over a number field $K$, and let $S$ be a
finite subset of the set $V^K$ of all places of $K$ containing the
set $V_{\infty}^K$ of archimedean places. Fix a
$K$-embedding $G \subset \mathrm{GL}_N$, and consider the group of
$S$-integral points
$$
G(\cO_K(S)) := G \cap {\mathrm{GL}}_N(\cO_K(S)).
$$
Then, for any field extension $F/K$, the subgroups of $G(F)$ that
are commensurable\footnote{We recall that two subgroups
$\mathcal{H}_1$ and $\mathcal{H}_2$ of an abstract group
$\mathcal{G}$ are called {\it commensurable} if their intersection
$\mathcal{H}_1 \cap \mathcal{H}_2$ is of finite index in each of the
subgroups.} with $G(\cO_K(S))$ are called $S$-{\it arithmetic}, and
in the case where $S = V_{\infty}^K$ simply {\it arithmetic} (note
that $\cO_K(V_{\infty}^K) = \cO_K$, the ring of algebraic integers
in $K$). It is well-known  that the resulting class of
$S$-arithmetic subgroups does not depend on the choice of
$K$-embedding $G \subset \mathrm{GL}_N$ (cf.\,\cite{PlR}). The
question, however, is what we should mean by an arithmetic subgroup
of $G(F)$ when $G$ is an algebraic group defined over a field $F$ of
characteristic zero that is not equipped with a structure of
$K$-group over some number field $K \subset F$. For example, what is
an arithmetic subgroup of $G(\R)$ where $G = \mathrm{SO}_3(f)$ and
$f = x^2 + e y^2 - \pi z^2$? For absolutely almost simple groups the
``right" concept that we will formalize below is given in terms of
the forms of $G$ over the subfields $K \subset F$ that are number
fields. In our example, we can consider the following rational
quadratic forms that are equivalent to $f$ over $\R$:
$$
f_1 = x^2 + y^2 - 3z^2 \ \ \ \text{and} \ \ \ f_2 = x^2 + 2y^2 -
7z^2,
$$
and set $G_i = \mathrm{SO}_3(f_i)$. Then for each $i = 1, 2$, we
have an $\R$-isomorphism $G_i \simeq G$, so the natural arithmetic
subgroup  $G_i(\Z) \subset G_i(\R)$ can be thought of as an
``arithmetic" subgroup of $G(\R)$. Furthermore, one can consider
quadratic forms over other number subfields $K \subset \R$ that
again become equivalent to $f$ over $\R$; for example,
$$
K = \Q(\sqrt{2}) \ \ \ \text{and} \ \ \ f_3 = x^2 + y^2 - \sqrt{2}
z^2.
$$
Then for $G_3 = \mathrm{SO}_3(f_3)$, there is an $\R$-isomorphism
$G_3 \simeq G$ which allows us to view the natural arithmetic
subgroup $G_3(\cO_K) \subset G_3(\R)$, where $\cO_K = \Z[\sqrt{2}]$,
as an ``arithmetic" subgroup of $G(\R)$. One can easily generalize
such constructions from arithmetic to $S$-arithmetic groups by
replacing the rings of integers with the rings of $S$-integers. So,
generally speaking, by an $S$-arithmetic subgroup of $G(\R)$ we mean
a subgroup which is commensurable to one of the subgroups obtained
through this construction for some choice of a number subfield $K
\subset \R$, a finite set $S$ of places of $K$ containing all the
archimedean ones, and a quadratic form $\tilde{f}$  over $K$ that
becomes equivalent to $f$ over $\R$. The technical definition is as
follows.

\vskip2mm

Let $G$ be a connected  absolutely almost simple  algebraic group
defined over a field $F$ of characteristic zero, $\overline{G}$ be
its adjoint group, and $\pi \colon G \to \overline{G}$ be the
natural isogeny. Suppose we are given the following data:

\vskip2mm

$\bullet$ \parbox[t]{13.5cm}{a {\it number field} $K$ with a {\it
fixed} embedding $K \hookrightarrow F$;}

\vskip1mm

$\bullet$ \parbox[t]{13.5cm}{an $F/K$-form $\mathscr{G}$ of
$\overline{G}$, which is an algebraic $K$-group such that there
exists an $F$-isomorphism ${}_F \mathscr{G} \simeq \overline{G}$,
where ${}_F\mathscr{G}$ is the group obtained from $\mathscr{G}$ by
the extension of scalars from $K$ to $F$;}

\vskip1mm

$\bullet$ \parbox[t]{13.5cm}{a finite set $S$ of places of $K$
containing $V_{\infty}^K$ but not containing any nonarchimedean
places $v$ such that $\mathscr{G}$ is
$K_v$-anisotropic\footnotemark.}

\footnotetext{We note that if $\mathscr{G}$ is $K_v$-anisotropic
then $\mathscr{G}(\cO_K(S))$ and $\mathscr{G}(\cO_K(S \cup \{v \}))$
are commensurable, and therefore the classes of $S$- and $(S \cup \{
v \})$-arithmetic subgroups coincide. Thus, this assumption on $S$
is necessary if we want to recover it from a given $S$-arithmetic
subgroup.}

\vskip2mm

\noindent We then have an embedding $\iota \colon \mathscr{G}(K)
\hookrightarrow \overline{G}(F)$ which is well-defined up to an
$F$-automorphism of $\overline{G}$ (note that we do {\it not} fix
an isomorphism ${}_F\mathscr{G} \simeq \overline{G}$). A subgroup
$\Gamma$ of $G(F)$ such that $\pi(\Gamma)$ is commensurable with
$\sigma(\iota (\mathscr{G}(\cO_{K}(S))))$, for some $F$-automorphism
$\sigma$ of $\overline{G}$, will be called a $(\mathscr{G} , K ,
S)$-{\it arithmetic subgroup}\footnote{This notion of arithmetic
subgroups coincides with that in Margulis' book \cite{Mar} for
absolutely simple adjoint groups.}, or an $S$-arithmetic subgroup described in terms of the triple
$(\mathscr{G}, K,S)$. As usual, $(\mathscr{G} , K,
V^{K}_{\infty})$-arithmetic subgroups will simply be called
$(\mathscr{G} , K)$-arithmetic.

\vskip2mm

We also need to introduce a more general notion of commensurability.
The point is that since weak commensurability is defined in terms of
eigenvalues, a subgroup $\Gamma \subset G(F)$ is weakly
commensurable with any conjugate subgroup, while the latter may not
be commensurable with the former in the usual sense. So, to make
theorems asserting that in certain situations ``{\it weak
commensurability implies commensurability}" possible (and such
theorems are in fact one of the goals of our analysis) one
definitely needs to modify the notion of commensurability. The
following notion works well in geometric applications. Let $G_i,$
for $i = 1,\,2,$ be a connected absolutely almost simple $F$-group,
and let $\pi_i \colon G_i \to \overline{G}_i$ be the isogeny onto
the corresponding adjoint group. We will say that the subgroups
$\Gamma_i$ of $G_i(F)$ are {\it commensurable up to an
$F$-isomorphism between $\overline{G}_1$ and $\overline{G}_2$} if
there exists an $F$-isomorphism $\sigma \colon \overline{G}_1 \to
\overline{G}_2$ such that $\sigma(\pi_1(\Gamma_1))$ is commensurable
with $\pi(\Gamma_2)$ in the usual sense. The key observation is that
the description of $S$-arithmetic subgroups in terms of triples
$(\mathscr{G},K,S)$ is very convenient for determining when two such
subgroups are commensurable in the new generalized sense.

\addtocounter{thm}{1}
\begin{prop}\label{P:WC1}
Let $G_1$ and $G_2$ be connected absolutely almost simple algebraic
groups defined over a field $F$ of characteristic zero, and for $i =
1, 2$, let $\Gamma_i$ be a Zariski-dense $(\mathscr{G}_i, K_i,
S_i)$-arithmetic subgroup of $G_i(F)$. Then $\Gamma_1$ and
$\Gamma_2$ are commensurable up to an $F$-isomorphism between
$\overline{G}_1$ and $\overline{G}_2$ if and only if $K_1 = K_2 =:
K$, $S_1 = S_2$, and $\mathscr{G}_1$ and $\mathscr{G}_2$ are
$K$-isomorphic.
\end{prop}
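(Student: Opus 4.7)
The plan is to dispatch the ``if'' direction by direct construction, and to reduce the ``only if'' direction to three assertions that can be proved essentially independently: equality of the fields, equality of the forms, and equality of the sets of places.

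For the ``if'' direction, I would fix a $K$-isomorphism $\tau\colon \mathscr{G}_1 \to \mathscr{G}_2$ and extend scalars to produce an $F$-isomorphism $_F\mathscr{G}_1 \simeq {}_F\mathscr{G}_2$. Composing with the implicit $F$-isomorphisms $_F\mathscr{G}_i \simeq \overline{G}_i$, I obtain an $F$-isomorphism $\sigma\colon \overline{G}_1 \to \overline{G}_2$. Since $\tau$ carries $\mathscr{G}_1(\cO_K(S))$ onto $\mathscr{G}_2(\cO_K(S))$, the subgroup $\sigma(\pi_1(\Gamma_1))$ is commensurable with $\pi_2(\Gamma_2)$ in the usual sense, as required.

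For the ``only if'' direction, I would first compose the given $F$-isomorphism with the structural embeddings to reduce to the situation $\overline{G}_1 = \overline{G}_2 =: \overline{G}$ and $\overline{\Gamma} := \pi_1(\Gamma_1) \cap \pi_2(\Gamma_2)$ Zariski-dense and of finite index in each $\pi_i(\Gamma_i)$. Then $\overline{\Gamma}$ is simultaneously $(\mathscr{G}_1, K_1, S_1)$- and $(\mathscr{G}_2, K_2, S_2)$-arithmetic, and the goal is to show that these two descriptions must coincide. \emph{Step 1} (fields). I would recover $K_i$ from $\overline{\Gamma}$ as the trace field $K_{\overline{\Gamma}}$ generated by $\Tr \Ad \gamma$, $\gamma \in \overline{\Gamma}$: the containment $K_{\overline{\Gamma}} \subseteq K_i$ is immediate from the arithmetic description, while the reverse containment is the standard Vinberg-type result on the field of definition of the adjoint representation of a Zariski-dense subgroup of an adjoint semi-simple group. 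Hence $K_1 = K_{\overline{\Gamma}} = K_2 =: K$. \emph{Step 2} (forms). With $K$ fixed, both $\mathscr{G}_1$ and $\mathscr{G}_2$ furnish $K$-structures on $\overline{G}$ that contain a common Zariski-dense subgroup. A further application of the Vinberg descent theorem shows that such a $K$-structure is determined up to $K$-isomorphism by the subgroup, producing a $K$-isomorphism $\mathscr{G}_1 \simeq \mathscr{G}_2$.

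\emph{Step 3} (sets of places). Having identified $\mathscr{G}_1 = \mathscr{G}_2 =: \mathscr{G}$, the subgroups $\mathscr{G}(\cO_K(S_1))$ and $\mathscr{G}(\cO_K(S_2))$ become commensurable. If some nonarchimedean $v$ lies in $S_1 \setminus S_2$, then by the standing hypothesis $\mathscr{G}$ is $K_v$-isotropic, so $\mathscr{G}(\cO_K(S_1))$ has unbounded image in $\mathscr{G}(K_v)$, whereas $\mathscr{G}(\cO_K(S_2)) \subset \mathscr{G}(\cO_{K,v})$ has relatively compact image — a contradiction. Thus $S_1 = S_2$. The main obstacle is Step 2, which is where the rigidity content lies; I would invoke Vinberg's theorem on fields of definition of adjoint representations rather than redevelop this descent machinery, and would also need to verify that the embeddings $\iota_i$ can be chosen so that $\overline{\Gamma}$ consists of $K$-rational matrices in both $\mathscr{G}_1$- and $\mathscr{G}_2$-coordinates, which is what makes the descent applicable on both sides simultaneously.
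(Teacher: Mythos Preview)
The paper is a survey and does \emph{not} supply a proof of Proposition~\ref{P:WC1}; it is stated as a known fact (the original argument is in \cite{PR-IHES}, Proposition~2.5 and Lemma~2.6) and then immediately used. So there is no in-paper proof to compare against.

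Your outline is the standard one and is essentially correct. A few remarks on places where you are slightly glib. In the ``if'' direction, a $K$-isomorphism $\tau$ need not literally carry $\mathscr{G}_1(\cO_K(S))$ \emph{onto} $\mathscr{G}_2(\cO_K(S))$; it carries it onto a subgroup commensurable with $\mathscr{G}_2(\cO_K(S))$, by the invariance of the $S$-arithmetic class under change of matrix realization. In Step~1, the inclusion $K_i \subseteq K_{\overline{\Gamma}}$ is not ``pure Vinberg'': Vinberg's theorem gives a descent of $\overline{G}$ to $K_{\overline{\Gamma}}$, but to conclude $K_i = K_{\overline{\Gamma}}$ you also need the (standard but separate) fact that a Zariski-dense $S$-arithmetic subgroup of an adjoint absolutely simple $K$-group has trace field exactly $K$, not a proper subfield --- this is where the arithmeticity hypothesis actually does work (see \cite[Lemma~2.6]{PR-IHES}). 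Once Step~1 is secured with $K$ equal to the trace field, Step~2 is indeed the uniqueness part of Vinberg's descent, and Step~3 is fine as written (the hypothesis that $S_i$ contains no nonarchimedean $v$ with $\mathscr{G}_i$ anisotropic over $K_v$ is exactly what makes the bounded/unbounded dichotomy decisive).
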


It follows from the above proposition that the arithmetic subgroups $\Gamma_1$,
$\Gamma_2$, and $\Gamma_3$ constructed above, of $G(\R)$, where $G =
\mathrm{SO}_3(f)$, are pairwise noncommensurable: indeed,
$\Gamma_3$, being defined over $\Q(\sqrt{2})$, cannot possibly be
commensurable to $\Gamma_1$ or $\Gamma_2$ as these two groups are
defined over $\Q$; at the same time, the non-commensurability of
$\Gamma_1$ and $\Gamma_2$ is a consequence of the fact that
$\mathrm{SO}_3(f_1)$ and $\mathrm{SO}_3(f_2)$ are not
$\Q$-isomorphic since the quadratic forms $f_1$ and $f_2$ are not equivalent over $\Q$.

\vskip2mm

\section{Results on weakly commensurable $S$-arithmetic
subgroups}\label{S:WC-Res}

\vskip2mm

In view of Proposition \ref{P:WC1}, the central question in the
analysis of weak commensurability of $S$-arithmetic subgroups is the
following: {\it Suppose we are given two Zariski-dense
$S$-arithmetic subgroups that are described in terms of triples.
Which components of these triples coincide given the fact that the
subgroups are weakly commensurable?} As the following result
demonstrates, two of these components {\it must} coincide.
\begin{thm}\label{T:WC3}
Let $G_1$ and $G_2$ be two connected absolutely almost simple
algebraic groups defined over a field $F$ of characteristic zero. If
Zariski-dense $(\mathscr{G}_i , K_i , S_i)$-arithmetic subgroups
$\Gamma_i$ of $G_i(F),$ where $i = 1,\,2,$ are weakly commensurable
for $i = 1 , 2,$ then $K_1 = K_2$ and $S_1 = S_2.$
\end{thm}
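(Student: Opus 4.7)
The plan is to prove the two assertions separately, with the field equality $K_1 = K_2$ being the more direct half and the equality of the sets of places $S_1 = S_2$ requiring a finer local analysis. For $K_1 = K_2$, I would identify, for each $i$, the adjoint-trace field $K_{\Gamma_i}$ of Theorem \ref{T:WC2} with the field of definition $K_i$. The inclusion $K_{\Gamma_i} \subset K_i$ is essentially formal: ${\rm Tr}\,{\rm Ad}$ is an $F$-regular class function on the $K_i$-group $\mathscr{G}_i$, invariant under the $F$-automorphism $\sigma$ arising in the definition of $(\mathscr{G}_i, K_i, S_i)$-arithmeticity, so it takes values in $K_i$ on $\sigma(\iota(\mathscr{G}_i(K_i)))$. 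For the reverse inclusion $K_i \subset K_{\Gamma_i}$, I would invoke a standard Vinberg-type descent: the $K_{\Gamma_i}$-Zariski-closure of $\pi(\Gamma_i)$ in $\mathscr{G}_i$ is a $K_{\Gamma_i}$-form of $\overline{G}_i$, and Zariski-density forces this closure to be $\mathscr{G}_i$ itself, so $\mathscr{G}_i$ descends to $K_{\Gamma_i}$ and therefore $K_i \subset K_{\Gamma_i}$. Combined with Theorem \ref{T:WC2}, this yields $K_1 = K_2 =: K$.

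For $S_1 = S_2$, I would argue by contradiction: without loss of generality assume $v \in S_1 \setminus S_2$. Since $S_1, S_2 \supset V_{\infty}^K$, the place $v$ is non-archimedean, and by the normalization built into the definition of $S$-arithmeticity, $\mathscr{G}_1$ is $K_v$-isotropic. On the $\Gamma_2$ side, every semi-simple $\gamma_2 \in \Gamma_2$ has matrix entries in $\cO_K(S_2)$ in any faithful $K$-representation, so both $\gamma_2$ and $\gamma_2^{-1}$ have eigenvalues integral over $\cO_K(S_2)$; consequently every character value $\chi_2(\gamma_2)$ is a $v$-adic unit in $\overline{K}^{\times}$. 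On the $\Gamma_1$ side, the $K_v$-isotropy of $\mathscr{G}_1$ together with $v \in S_1$ enables me to produce, via the generic-element results reviewed in \S \ref{S:Gen}, a semi-simple element $\gamma_1 \in \Gamma_1$ lying in a maximal $K$-torus $T_1$ of $\mathscr{G}_1$ such that $T_1$ is $K_v$-isotropic, the splitting field of $T_1$ has Galois group acting on $X(T_1)$ as the full Weyl group, and the valuation map $\chi \mapsto v(\chi(\gamma_1))$ on $X(T_1)$ is non-zero.

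Weak commensurability then furnishes $\gamma_2 \in \Gamma_2$ and non-trivial characters $\chi_1 \in X(T_1)$, $\chi_2 \in X(T_2)$ with $\chi_1(\gamma_1) = \chi_2(\gamma_2) \neq 1$; taking the $v$-adic valuation yields $v(\chi_1(\gamma_1)) = 0$. The naive hope that $\gamma_1$ could be chosen so that no non-trivial $\chi_1$ has $v(\chi_1(\gamma_1)) = 0$ fails once $\dim T_1 > 1$, since the kernel of $\chi \mapsto v(\chi(\gamma_1))$ has rank at least $\dim T_1 - 1$; this is the main obstacle. My plan to overcome it is to exploit the full Galois symmetry built into the generic choice of $T_1$: property (A), which is automatic for $S$-arithmetic subgroups, places $\chi_i(\gamma_i)$ in $\overline{\Q}^{\times}$, so the scalar identity $\chi_1(\gamma_1) = \chi_2(\gamma_2)$ is preserved under $\Ga(\overline{K}/K)$; since the Galois group acts on $X(T_1)$ as the full Weyl group and the $W$-orbit of any non-trivial character generates $X(T_1)$ up to finite index, the relation propagates to a system of identities forcing $v(\chi(\gamma_1)) = 0$ for every $\chi \in X(T_1)$. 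This contradicts the non-triviality of the valuation map arranged above, completing the proof.
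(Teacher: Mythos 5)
Your overall strategy is essentially the one used in \cite{PR-IHES} (the survey only states the theorem and defers the proof there), and both halves of your argument are sound. For $K_1=K_2$ the reduction to Theorem \ref{T:WC2} via the identification $K_{\Gamma_i}=K_i$ is the right move; I would only caution that the final step ``$\mathscr{G}_i$ descends to $K_{\Gamma_i}$ and therefore $K_i\subset K_{\Gamma_i}$'' does not follow from descent alone (a $K_i$-group can perfectly well descend to a proper subfield). What actually finishes the argument is the uniqueness of the arithmetic triple $(\mathscr{G}_i,K_i,S_i)$ describing a given $S$-arithmetic subgroup, which is precisely what underlies Proposition \ref{P:WC1}: if Vinberg descent gave an arithmetic description of $\Gamma_i$ over the smaller field $K_{\Gamma_i}$, comparing the two triples would force $K_i=K_{\Gamma_i}$. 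That is the standard argument and is carried out in \cite{PR-IHES}; your sketch compresses it in a way that reads like a non sequitur.

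For $S_1=S_2$, your argument is correct and is, in substance, the paper's: boundedness of eigenvalues of $\Gamma_2$ at $v\notin S_2$, existence of a generic $\gamma_1\in\Gamma_1$ in a $K$-irreducible torus $T_1$ that is unbounded at $v\in S_1$, and the observation that the single identity $\chi_1(\gamma_1)=\chi_2(\gamma_2)$ propagates under $\Ga(\overline K/K)$ to all of $X(T_1)\otimes_{\Z}\Q$ because the Galois image contains the Weyl group and $W$ acts irreducibly. You correctly identified the rank-one kernel of $\chi\mapsto w(\chi(\gamma_1))$ as the obstacle and the Galois/Weyl propagation as the cure. One stylistic remark: the appeal to property (A) is unnecessary here --- Galois equivariance $\sigma(\chi(\gamma))=(\sigma\chi)(\gamma)$ holds simply because $\gamma_i\in\mathscr{G}_i(K)$, and the eigenvalues of elements of an $S$-arithmetic group automatically lie in $\overline{\Q}$ since $K$ is a number field. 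Finally, \cite{PR-IHES} phrases this step through the Isogeny Theorem (Theorem \ref{T:Gen4}) --- a $K$-isogeny $T_2\to T_1$ carrying a power of $\gamma_2$ to a power of $\gamma_1$ immediately transports boundedness at $v$ --- whereas you argue directly from the character identity and its Galois orbit. The two routes are equivalent in content; yours avoids quoting the Isogeny Theorem but re-derives its valuation-theoretic consequence in situ.
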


\vskip1mm

In general, the forms $\mathscr{G}_1$ and $\mathscr{G}_2$ do not
have to be $K$-isomorphic (see \cite{PR-IHES}, Examples 6.5 and 6.6
as well as the general construction in \S 9). In the next theorem we
list the cases where it can nevertheless be asserted that
$\mathscr{G}_1$ and $\mathscr{G}_2$ are necessarily $K$-isomorphic,
and then give a general finiteness result for the number of
$K$-isomorphism classes.
\begin{thm}\label{T:WC4}
Let $G_1$ and $G_2$ be two connected absolutely almost simple
algebraic groups defined over a field $F$ of characteristic zero, of
the \emph{same} type different from $\textsf{A}_{n}$,
$\textsf{D}_{2n+1}$, with $n > 1$, or $\textsf{E}_6$. If for
$i = 1,\,2 $, $G_i(F)$  contain Zariski-dense weakly commensurable
$(\mathscr{G}_i , K , S)$-arithmetic subgroups $\Gamma_i,$ then
$\mathscr{G}_1 \simeq \mathscr{G}_2$ over $K,$ and hence $\Gamma_1$
and $\Gamma_2$ are commensurable up to an $F$-isomorphism between
$\overline{G}_1$ and $\overline{G}_2.$
\end{thm}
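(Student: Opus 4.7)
Since Theorem~\ref{T:WC3} already gives $K_1 = K_2 =: K$ and $S_1 = S_2$ (which are in fact built into the hypothesis), the content of the statement is the existence of a $K$-isomorphism $\mathscr{G}_1 \simeq \mathscr{G}_2$. My strategy is to extract from the weak commensurability of $\Gamma_1$ and $\Gamma_2$ a rich family of maximal $K$-tori common to $\mathscr{G}_1$ and $\mathscr{G}_2$ (up to $K$-isomorphism), and then to invoke a characterization of absolutely almost simple $K$-groups by their maximal $K$-tori which holds precisely outside the three excluded Killing--Cartan types. The commensurability of $\Gamma_1$ and $\Gamma_2$ up to an $F$-isomorphism between $\overline{G}_1$ and $\overline{G}_2$ is then immediate from Proposition~\ref{P:WC1}.

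For the first step, I would invoke the theorem on generic elements in finitely generated Zariski-dense subgroups (from \cite{PR-Reg} and reviewed in \S\ref{S:Gen}) to produce a semi-simple $\gamma_1 \in \Gamma_1$ lying in a unique maximal $K$-torus $T_1$ of $\mathscr{G}_1$ whose splitting field has Galois group over $K$ containing the full Weyl group $W(\mathscr{G}_1, T_1)$. The weak commensurability hypothesis provides a semi-simple $\gamma_2 \in \Gamma_2$ of infinite order, a maximal $K$-torus $T_2$ of $\mathscr{G}_2$ containing $\gamma_2$, and characters $\chi_i \in X(T_i)$ with $\chi_1(\gamma_1) = \chi_2(\gamma_2) \neq 1$. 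The genericity of $\gamma_1$ should then force the splitting field of $T_2$ to coincide with that of $T_1$ and the Galois $\ast$-action on $X(T_2)$ to match that on $X(T_1)$: the multiplicative relations among the $W$-conjugates of $\chi_1(\gamma_1)$ transport to the corresponding relations for $\chi_2(\gamma_2)$, yielding $T_1 \simeq_K T_2$. Running this construction over a Zariski-dense set of generic $\gamma_1 \in \Gamma_1$ then exhibits a large common family of $K$-isomorphism classes of maximal $K$-tori in $\mathscr{G}_1$ and $\mathscr{G}_2$.

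The second step, and the main obstacle, is the Galois-cohomological rigidity statement that this common family of tori determines $\mathscr{G}_1$ and $\mathscr{G}_2$ up to $K$-isomorphism. Here the hypothesis on the Killing--Cartan type is decisive: for types outside $\textsf{A}_{n}$, $\textsf{D}_{2n+1}\,(n > 1)$, $\textsf{E}_6$, the center of the simply connected cover has exponent dividing two (or is trivial), and a combination of the Hasse principle for adjoint semi-simple groups with the torus matching produced above should suffice to conclude $\mathscr{G}_1 \simeq \mathscr{G}_2$ over $K$. For the three excluded types the center contains an element of order $4$ or of odd prime order, and Examples~6.5 and~6.6 of \cite{PR-IHES}, together with the systematic construction in \S 9 there, produce non-$K$-isomorphic $K$-forms supporting Zariski-dense weakly commensurable $S$-arithmetic subgroups, so the restriction on the type is sharp and the cohomological input in this step is unavoidable.
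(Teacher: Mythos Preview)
Your plan is aligned with the actual strategy: pass from weak commensurability of generic elements to an isogeny (indeed isomorphism) of the associated maximal $K$-tori via the Isogeny Theorem (Theorem~\ref{T:Gen4}), then use local--global arguments to conclude $\mathscr{G}_1 \simeq \mathscr{G}_2$ over $K$. The paper, being a survey, does not give a self-contained proof either; it refers to \cite{PR-IHES}, \cite{PR-invol}, and \cite{Gar}.

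Two points deserve sharpening. First, your explanation of the type restriction via the exponent of the center is extensionally correct but not the operative mechanism. The paper singles out the property that $-1$ lies in the Weyl group as the decisive one: this is what makes the $\pi^*$ in Theorem~\ref{T:Gen4}(2) carry $\Phi(G_1,T_1)$ onto $\Phi(G_2,T_2)$ and ultimately what kills the relevant cohomological ambiguity. Second, your ``second step'' hides the real work. One does not simply invoke a black-box characterization by maximal tori (Theorem~\ref{T:WC13}(3) is in fact proved by the same circle of ideas, so citing it would be circular). Rather, one uses generic tori \emph{with prescribed local behavior} (Corollary~\ref{C:Gen1}) to force $\mathscr{G}_1 \simeq \mathscr{G}_2$ over each completion $K_v$, and then applies the Hasse principle. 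Even so, type $\textsf{D}_{2n}$ resisted this direct approach: the case $n>2$ was settled in \cite{PR-invol} using embeddings of \'etale algebras with involution into simple algebras with involution, and $\textsf{D}_4$ (and a uniform treatment of all $\textsf{D}_{2n}$) required Garibaldi's Galois-cohomological argument \cite{Gar}. Your sketch does not anticipate this bifurcation, and a complete proof must.
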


In this theorem, type $\textsf{D}_{2n}$ $(n \geqslant 2)$ required
special consideration. The case $n > 2$ was settled in
\cite{PR-invol} using the techniques of \cite{PR-IHES} in
conjunction with  results on embeddings of fields with involutive
automorphisms into simple algebras with involution. The remaining
case of type $\textsf{D}_4$ was treated by Skip Garibaldi
\cite{Gar}, whose argument actually applies to all $n$ and explains
the result from the perspective of Galois cohomology, providing
thereby a cohomological insight into the difference between the
types $\textsf{D}_{2n}$ and $\textsf{D}_{2n + 1}$. We note that the
types excluded in the theorem are precisely the types for which the
automorphism $\alpha \mapsto -\alpha$ of the corresponding root
system is not in the Weyl group. More importantly, all these types
are honest exceptions to the theorem -- a general
Galois-cohomological construction of weakly commensurable, but not
commensurable, Zariski-dense $S$-arithmetic subgroups for all of
these types is given in \cite[\S 9]{PR-IHES}.

\vskip2mm

\begin{thm}\label{T:WC5}
Let $G_1$ and $G_2$ be two connected absolutely almost simple groups
defined over a field $F$ of characteristic zero. Let $\Gamma_1$ be a
Zariski-dense $(\mathscr{G}_1 , K , S)$-arithmetic subgroup of
$G_1(F).$ Then the set of $K$-isomorphism classes of $K$-forms
$\mathscr{G}_2$ of $\overline{G}_2$ such that $G_2(F)$ contains a
Zariski-dense $(\mathscr{G}_2 , K , S)$-arithmetic subgroup weakly
commensurable to $\Gamma_1$ is finite.

In other words, the set of all Zariski-dense $(K , S)$-arithmetic
subgroups of  $G_2(F)$ which are weakly commensurable to a given
Zariski-dense $(K , S)$-arithmetic subgroup is a union of finitely
many commensurability classes.
\end{thm}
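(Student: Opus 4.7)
The plan is to reduce the assertion to a classical finiteness theorem for $K$-forms of a reductive $K$-group with prescribed local invariants, after using the earlier theorems of this section to rigidify everything else in the triple. By Theorem \ref{T:WC1} the Killing--Cartan types of $G_1$ and $G_2$ are rigidly linked (either they coincide or they differ by the $\textsf{B}_n/\textsf{C}_n$ swap), and by Theorem \ref{T:WC3} any Zariski-dense $(\mathscr{G}_2, K_2, S_2)$-arithmetic subgroup weakly commensurable to $\Gamma_1$ automatically satisfies $K_2 = K$ and $S_2 = S$. Thus the remaining problem is to bound the number of $K$-isomorphism classes of $K$-forms $\mathscr{G}_2$ of $\overline{G}_2$ admitting an $S$-arithmetic subgroup weakly commensurable to $\Gamma_1$.

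The central technical step will be to convert the weak-commensurability relation between $\Gamma_1$ and $\Gamma_2$ into strong local constraints on $\mathscr{G}_2$ at each non-archimedean place $v$ of $K$. The mechanism is the existence of generic elements in arbitrary finitely generated Zariski-dense subgroups (the results of \cite{PR-Reg} reviewed in \S\ref{S:Gen}). A generic $\gamma_1 \in \Gamma_1$ picks out a maximal $K$-torus $T_1 \subset \mathscr{G}_1$ whose splitting field $L_1/K$ has Galois group equal to the full Weyl group and is unramified outside a finite set of places depending only on $\Gamma_1$. Given a semi-simple $\gamma_2 \in \Gamma_2$ weakly commensurable to $\gamma_1$, the multiplicative relations between the eigenvalues of $\gamma_1$ and $\gamma_2$ force the splitting field of the corresponding maximal $K$-torus $T_2 \subset \mathscr{G}_2$ to lie inside a controlled extension of $L_1$. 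Consequently, at every place $v$ outside a finite set $T \supset S$ (depending only on $\Gamma_1$), the possible Tits indices of $\mathscr{G}_2$ over $K_v$ are restricted to a finite list, and at almost all $v$ the group $\mathscr{G}_2$ must in fact be quasi-split over $K_v$ whenever $\mathscr{G}_1$ is.

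Once these local constraints are in place, the finiteness conclusion follows from the classical theorem that the set of $K$-isomorphism classes of $K$-forms of a given absolutely almost simple $K$-group with prescribed local type at a fixed finite set of places, and quasi-split outside it, is finite (a standard consequence of Borel's finiteness theorem for Galois cohomology with restricted ramification, together with the finiteness of the outer-automorphism group, which accounts for the inner-versus-outer dichotomy). Partitioning the admissible $\mathscr{G}_2$ by their finitely many allowable local invariants at the places of $T$ produces only finitely many $K$-isomorphism classes, which is exactly the statement of the theorem; the second formulation then follows from Proposition \ref{P:WC1}.

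The main obstacle will be the local-constraint step: the eigenvalue-theoretic notion of weak commensurability has to be translated into a direct comparison of the Tits indices of $\mathscr{G}_1$ and $\mathscr{G}_2$ over each completion $K_v$. This is where the existence and abundance of generic elements (and of the ``generic'' maximal tori they cut out) plays the decisive role, since only tori whose splitting field is as large as the Weyl group allows carry enough Galois-theoretic information to detect the isomorphism class of the ambient form at almost every place. Modulo this ingredient, everything else reduces to the rigidity supplied by Theorems \ref{T:WC1} and \ref{T:WC3} together with standard finiteness results from the arithmetic theory of algebraic groups.
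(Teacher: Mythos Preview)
The paper is a survey and does not actually give a proof of Theorem~\ref{T:WC5}; the result is simply quoted from \cite{PR-IHES}. So there is no ``paper's own proof'' to compare against line by line. That said, your outline is faithful to the strategy used in \cite{PR-IHES} and to the machinery this survey assembles in \S\ref{S:Gen}: fix $K$ and $S$ via Theorem~\ref{T:WC3}, produce generic elements in $\Gamma_1$, use weak commensurability to constrain the maximal $K$-tori (hence the local invariants) of $\mathscr{G}_2$, and conclude via a finiteness theorem for forms with bounded ramification.

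One refinement worth making explicit: the passage from ``$\gamma_1$ and $\gamma_2$ are weakly commensurable'' to ``the splitting field of $T_2$ is controlled by that of $T_1$'' is precisely the content of the Isogeny Theorem (Theorem~\ref{T:Gen4} here, Theorem~4.2 of \cite{PR-IHES}). You describe this as ``multiplicative relations between the eigenvalues force the splitting field of $T_2$ to lie inside a controlled extension of $L_1$,'' which is the right intuition, but the actual mechanism is that one obtains a $K$-isogeny $T_2 \to T_1$, so the splitting fields coincide. This is what pins down $\mathscr{G}_2$ locally: at each $v \notin T$ the group $\mathscr{G}_2$ contains a generic maximal $K_v$-torus isogenous to one in $\mathscr{G}_1$, and this forces $\mathscr{G}_2$ to be quasi-split over $K_v$ whenever $\mathscr{G}_1$ is. The final finiteness step is then exactly as you say, via the finiteness of $H^1(K, \overline{G})$ with prescribed localizations (Borel--Serre, cf.\ \cite{PlR}). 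So your sketch is correct; just name Theorem~\ref{T:Gen4} as the bridge rather than leaving it as an informal eigenvalue argument.
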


\vskip2mm

A noteworthy fact about weak commensurability is that it has the
following implication for the existence of unipotent elements in
arithmetic subgroups (even though it is formulated entirely in terms
of semi-simple ones). We recall that a semi-simple $K$-group is
called $K$-{\it isotropic} if $\mathrm{rk}_K\: G > 0$; in
characteristic zero, this is equivalent to the existence of
nontrivial unipotent elements in $G(K)$. Moreover, if $K$ is a
number field then $G$ is $K$-isotropic if and only if every
$S$-arithmetic subgroup contains unipotent elements, for any $S$.
\begin{thm}\label{T:WC6}
Let $G_1$  and $G_2$ be two connected absolutely almost simple
algebraic groups defined over a field $F$ of characteristic zero.
For $i = 1,2$, let $\Gamma_i$ be a Zariski-dense $(\mathscr{G}_i ,K
, S)$-arithmetic subgroup of $G_i(F)$. If $\Gamma_1$ and $\Gamma_2$
are weakly commensurable then $\mathrm{rk}_K\mathscr{G}_1
=\mathrm{rk}_K\mathscr{G}_2$; in particular, if $\mathscr{G}_1$ is
$K$-isotropic, then so is $\mathscr{G}_2$.
\end{thm}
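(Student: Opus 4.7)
The plan is to express $\mathrm{rk}_K \mathscr{G}_i$ as an invariant of the weak commensurability class of $\Gamma_i$ that can be read off from the eigenvalues of a sufficiently generic element. Recall that for any maximal $K$-torus $T \subset \mathscr{G}$ one has $\mathrm{rk}_K T = \mathrm{rank}\, X(T)^{\Gamma_K}$, and $\mathrm{rk}_K \mathscr{G} = \max_T \mathrm{rk}_K T$, the maximum being attained whenever $T$ contains a maximal $K$-split torus of $\mathscr{G}$. (By Theorem \ref{T:WC3}, the hypotheses already fix $K$ and $S$, so only the comparison of ranks is at stake.)

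First, using the generic element theorem (\S\ref{S:Gen}, after \cite{PR-Reg}), I would produce a semisimple $\gamma_1 \in \Gamma_1$ of infinite order such that (i) $\gamma_1$ is regular and lies in a unique maximal $K$-torus $T_1$ of $\mathscr{G}_1$, (ii) $T_1$ is chosen to contain a maximal $K$-split torus $S_1$ of $\mathscr{G}_1$, so $\mathrm{rk}_K T_1 = r_1 := \mathrm{rk}_K \mathscr{G}_1$, and (iii) after multiplying $\gamma_1$ by an appropriate element of infinite order in the $S$-arithmetic lattice $S_1(\mathcal{O}_K(S))$, the subgroup of $K^{\times}$ generated by $\{\chi(\gamma_1) : \chi \in X(T_1)^{\Gamma_K}\}$ is free abelian of rank $r_1$, modulo roots of unity. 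Next, invoke weak commensurability for each element $\gamma_1' \in T_1 \cap \Gamma_1$ of infinite order obtained by twisting $\gamma_1$ by elements of $S_1(\mathcal{O}_K(S))$: there is a semisimple $\gamma_2' \in \Gamma_2$ of infinite order and a maximal $K$-torus $T_2' \ni \gamma_2'$ in $\mathscr{G}_2$ with $\chi_1(\gamma_1') = \chi_2(\gamma_2') \neq 1$ for suitable $K$-rational characters. Using the uniqueness of $T_1$ coming from genericity together with a rigidity argument, show that the partners $\gamma_2'$ can be taken to lie in a single maximal $K$-torus $T_2$ of $\mathscr{G}_2$, and that the identification of eigenvalues transports $X(S_1) = X(T_1)^{\Gamma_K}$ into a sublattice of $X(T_2)^{\Gamma_K}$ of rank $r_1$. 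This gives $\mathrm{rk}_K T_2 \geq r_1$, hence $\mathrm{rk}_K \mathscr{G}_2 \geq r_1$, and symmetry yields equality. The stated consequence for unipotent elements then follows from the Borel--Tits characterization recalled just before the theorem.

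The main obstacle will be the rigidity step. Given several weakly commensurable pairs $(\gamma_1', \gamma_2')$, one must show that the $\gamma_2'$'s can be gathered into a single torus $T_2$ and that the resulting map on characters preserves $\Gamma_K$-invariants — i.e., that a $K$-rational character of $S_1$ on the left corresponds to a $K$-rational character on the right. I expect this to require the full force of the generic element theorem (to pin down $T_1$ uniquely and to supply enough multiplicatively independent eigenvalues) combined with a Galois-theoretic input analogous to the one used in the proof of Theorem \ref{T:WC3} for the equality $K_1 = K_2$: the multiplicative relations among eigenvalues determined by weak commensurability must transport the $\Gamma_K$-action on $X(T_1)$ into the $\Gamma_K$-action on $X(T_2)$, and only once this compatibility is established does the bound on $K$-rank drop out.
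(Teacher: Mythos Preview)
This survey does not itself prove Theorem~\ref{T:WC6}; it refers to \cite[\S7]{PR-IHES} and remarks that the result proved there is in fact stronger (isomorphism of the Tits indices of $\mathscr{G}_1/K$ and $\mathscr{G}_2/K$). So there is no proof here to compare against line by line, but your proposal contains a genuine structural gap that is worth naming, because it is precisely the obstacle the actual argument has to circumvent.

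Your requirements (i) and (ii) on $T_1$ are mutually exclusive. You want $T_1$ to be the torus attached to a generic element, so that the Isogeny Theorem and the rigidity machinery of \S\ref{S:Gen} are available, \emph{and} you want $T_1$ to contain a maximal $K$-split torus $S_1$ of $\mathscr{G}_1$, so that $\mathrm{rk}_K T_1 = r_1$. But, as recorded in \S\ref{S:Gen}, a generic maximal $K$-torus $T$ of an absolutely almost simple $K$-group is $K$-\emph{irreducible}: since $\theta_T(\Ga(K_T/K)) \supset W(G,T)$ and $W(G,T)$ acts irreducibly on $X(T)\otimes_{\Z}\Q$, the torus $T$ has no proper $K$-subtori. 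Hence $T$ is either $K$-split or $K$-anisotropic, and it cannot be $K$-split (else $K_T = K$ and the Galois image is trivial). Thus every generic $T_1$ has $\mathrm{rk}_K T_1 = 0$, so if $r_1 > 0$ no such $T_1$ contains $S_1$. If instead you drop genericity and merely take $\gamma_1$ regular in a torus through $S_1$, you lose the only tool (Theorem~\ref{T:Gen4}) that converts a single weak-commensurability relation into a statement about tori; your ``rigidity step'' then has no mechanism behind it. Varying $\gamma_1'$ inside $T_1$ and hoping the partners $\gamma_2'$ assemble into a single $T_2$ does not help: weak commensurability hands you one character relation per pair, with no control whatsoever over which maximal torus of $\mathscr{G}_2$ the partner sits in.

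The argument in \cite{PR-IHES} resolves exactly this tension by trading the global $K$-split condition for a \emph{local} one. Using Corollary~\ref{C:Gen1} one produces, for a chosen place $v$ of $K$, a maximal $K$-torus $T_1$ of $\mathscr{G}_1$ that is generic over $K$ (hence $K$-anisotropic globally) yet is $G_1(K_v)$-conjugate to a torus containing a maximal $K_v$-split torus, so that $\mathrm{rk}_{K_v} T_1 = \mathrm{rk}_{K_v}\mathscr{G}_1$. A generic $\gamma_1 \in T_1(K)\cap\Gamma_1$ of infinite order is weakly commensurable to some $\gamma_2 \in \Gamma_2$, and now Theorem~\ref{T:Gen4} applies to give a $K$-isogeny $T_2 \to T_1$; isogenous $K$-tori have equal $K_v$-rank, so $\mathrm{rk}_{K_v}\mathscr{G}_2 \geqslant \mathrm{rk}_{K_v}\mathscr{G}_1$, and symmetry gives equality at every $v$. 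The passage from this local information to the global statement about $\mathrm{rk}_K$ (indeed, about the full Tits index over $K$) is the additional work carried out in \cite[\S7]{PR-IHES}. The moral is that genericity over $K$ is compatible with arbitrary prescribed \emph{local} behaviour (Corollary~\ref{C:Gen1}), but is incompatible with positive $K$-rank globally; your plan needs the latter, the actual proof uses the former.
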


We note that in \cite[\S 7]{PR-IHES} we prove a somewhat more
precise result, viz. that if $G_1$ and $G_2$ are of the same type,
then the Tits indices of $\mathscr{G}_1/K$ and $\mathscr{G}_2/K$ are
isomorphic, but we will not get into these technical details here.

\vskip2mm

The following result asserts that a lattice{\footnote{ A discrete
subgroup $\Gamma$ of a locally compact topological group $\cG$ is
said to be a lattice in $\cG$ if $\cG/\Gamma$ carries a {\it finite}
$\cG$-invariant Borel measure.}} which is weakly commensurable with
an $S$-arithmetic group is itself $S$-arithmetic.
\begin{thm}\label{T:WC7}
Let $G_1$ and $G_2$ be two connected absolutely almost simple
algebraic groups defined over a nondiscrete locally compact field
$F$ of characteristic zero, and for $i =1, \,2$, let $\Gamma_i$ be a
Zariski-dense lattice in $G_i(F).$ Assume that $\Gamma_1$ is a $(K ,
S)$-arithmetic subgroup of $G_1(F)$. If $\Gamma_1$ and $\Gamma_2$
are weakly commensurable, then $\Gamma_2$ is a $(K , S)$-arithmetic
subgroup of $G_2(F)$.
\end{thm}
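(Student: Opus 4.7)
The plan is to reduce the theorem to proving that $\Gamma_2$ is $S'$-arithmetic for \emph{some} triple $(\mathscr{G}_2, K', S')$; once this is known, Theorem \ref{T:WC3} immediately forces $K' = K$ and $S' = S$, closing the argument. Thus the genuine content is upgrading $\Gamma_2$ from ``Zariski-dense lattice weakly commensurable with an $S$-arithmetic subgroup'' to ``$S$-arithmetic''.

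First, I would pin down the trace field. By Theorem \ref{T:WC2}, $K_{\Gamma_2} = K_{\Gamma_1}$, and since $\Gamma_1$ is Zariski-dense and $(\mathscr{G}_1, K, S)$-arithmetic, a direct computation (after replacing $\Gamma_1$ by a finite-index subgroup if needed) identifies $K_{\Gamma_1}$ with the number field $K$. Hence $K_{\Gamma_2}$ is a number field sitting inside $F$. A Vinberg/Weil-style descent argument based on the trace field then produces an $F/K$-form $\mathscr{G}_2$ of $\overline{G}_2$ such that $\pi_2(\Gamma_2)$ is contained, up to a choice of $F$-isomorphism ${}_F\mathscr{G}_2 \simeq \overline{G}_2$, in $\mathscr{G}_2(K)$.

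Next, I would promote this $K$-structure on $\Gamma_2$ to a genuine $(K,S')$-arithmetic structure. When $G_2$ has higher $F$-rank, Margulis arithmeticity for lattices in $G_2(F)$ immediately forces $\Gamma_2$ to be $S'$-arithmetic for some finite $S' \supseteq V^K_\infty$. In the remaining rank-one cases---where authentic non-arithmetic lattices can and do exist in $\mathrm{SO}(n,1)$ and $\mathrm{SU}(n,1)$---I would exploit the supply of generic semi-simple elements in $\Gamma_1$ (cf.~\S \ref{S:Gen}): each such element has, via weak commensurability, a partner in $\Gamma_2$ whose eigenvalues satisfy sharp algebraic relations with those of the original; these relations collectively pin down the $K$-arithmetic structure on $\Gamma_2$ and preclude it from lying outside the commensurability class of $\mathscr{G}_2(\mathcal{O}_K(S'))$. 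With both $\Gamma_1$ and $\Gamma_2$ now arithmetic, Theorem \ref{T:WC3} matches the sets of places, yielding $S' = S$.

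The hard part is clearly the rank-one step. Margulis arithmeticity is unavailable, and merely knowing the trace field is a number field does not rule out non-arithmetic lattices (witness the Gromov--Piatetski-Shapiro hybrids). The weak-commensurability hypothesis---not just the lattice property---has to be converted into enough rational constraints on eigenvalues of semi-simple elements of $\Gamma_2$ to force commensurability with $\mathscr{G}_2(\mathcal{O}_K(S'))$. Executing this conversion cleanly, while simultaneously ensuring that the $F$-form produced by descent is the ``correct'' $\mathscr{G}_2$ and not some outer twist, is the main technical obstacle I anticipate.
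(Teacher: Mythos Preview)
This article is a survey; Theorem \ref{T:WC7} is stated as a result from \cite{PR-IHES} without proof or sketch, so there is no argument in the present paper to compare your proposal against.

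On its own merits, your architecture is the standard one and matches the shape of the actual proof in \cite{PR-IHES}: identify $K_{\Gamma_2}$ with $K$ via Theorem \ref{T:WC2}, descend $\overline{G}_2$ to a $K$-form $\mathscr{G}_2$ containing the image of $\Gamma_2$, establish that $\Gamma_2$ is $(\mathscr{G}_2,K',S')$-arithmetic for \emph{some} triple, and then invoke Theorem \ref{T:WC3} to force $K'=K$ and $S'=S$. The Margulis shortcut when $\mathrm{rk}_F\,G_2\geqslant 2$ is legitimate.

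You correctly isolate the archimedean rank-one case as the crux and are candid that your argument there is only a sketch. That gap is genuine: the phrase ``these relations collectively pin down the $K$-arithmetic structure on $\Gamma_2$'' is not yet an argument, and the Gromov--Piatetski-Shapiro examples you mention show that trace-field and eigenvalue information alone cannot force arithmeticity. In \cite{PR-IHES} this step is carried out with specific machinery---the Isogeny Theorem (Theorem \ref{T:Gen4} here) applied to generic elements, combined with local analysis of which maximal $K$-tori actually meet $\Gamma_2$---rather than soft eigenvalue bookkeeping. So your proposal is a correct outline with an honestly acknowledged hole at exactly the point where the real work lies.
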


\vskip2mm

\section{Geometric applications}\label{S:WC2}

\vskip2mm

We are now in a position
to give the precise statements of our results on isospectral and
length-commensurable locally symmetric spaces. Throughout this
subsection, for $i = 1, 2$, $G_i$ will denote an absolutely simple
real algebraic group and $\fX_i$ the symmetric space of
$\mathcal{G}_i = G_i(\R)$. Furthermore, given a discrete
torsion-free subgroup $\Gamma_i \subset \mathcal{G}_i$, we let
$\fX_{\Gamma_i} = \fX_i/\Gamma_i$ denote the corresponding locally
symmetric space. The geometric results are basically obtained by
combining Theorem \ref{T:LC-WC} and Corollary \ref{C:LC-WC} with the
results on weakly commensurable subgroups from the previous
section. It should be emphasized that when
$\fX_{\Gamma_1}$ and $\fX_{\Gamma_2}$ are both rank one spaces and
we are not in the exceptional case $(\mathcal{E})$ (which is the
case, for example, for all hyperbolic $n$-manifolds with $n
\geqslant 4$) our results are {\it unconditional}, while in all
other cases they depend on the validity of Schanuel's conjecture.

\vskip1mm

Now, applying Theorems \ref{T:WC1} and \ref{T:WC2} we obtain the
following.
\begin{thm}\label{T:WC8}
Let $G_1$ and $G_2$ be connected absolutely simple real algebraic
groups, and let $\fX_{\Gamma_i}$ be a locally symmetric space of
finite volume, of $\cG_i,$ for $i = 1,\,2.$ If $\fX_{\Gamma_1}$ and
$\fX_{\Gamma_2}$ are length-commensurable, then {\rm{(i)}} either
$G_1$ and $G_2$ are of same Killing-Cartan type, or one of them is
of type $\textsf{B}_{n}$ and the other is of type
$\textsf{C}_{n}$ for some $n \geqslant 3$, {\rm{(ii)}}
$K_{\Gamma_1} = K_{\Gamma_2}.$
\end{thm}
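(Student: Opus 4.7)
The plan is to reduce the geometric statement to a direct application of the structural results on weakly commensurable Zariski-dense subgroups already established in this section. The bridge is Corollary \ref{C:LC-WC}, which converts length-commensurability of $\fX_{\Gamma_1}$ and $\fX_{\Gamma_2}$ into weak commensurability of the lattices $\Gamma_1 \subset G_1(\R)$ and $\Gamma_2 \subset G_2(\R)$ (unconditionally in the rank-one non-exceptional setting, and conditionally on Schanuel's conjecture otherwise, as per the discussion preceding the theorem). Once weak commensurability is in hand, the conclusions (i) and (ii) are immediate consequences of Theorems \ref{T:WC1} and \ref{T:WC2} respectively.

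Before invoking those theorems, I would verify that $\Gamma_1$ and $\Gamma_2$ satisfy their hypotheses of being finitely generated and Zariski-dense. Finite generation of a lattice in a connected semi-simple real Lie group is classical (it follows, e.g., from reduction theory and the existence of a fundamental set with finitely many generators from the side-pairings). Zariski-density of a lattice $\Gamma_i$ in the absolutely simple $\R$-group $G_i$ is the Borel density theorem, applied to $\cG_i = G_i(\R)$, using that $G_i$ has no nontrivial $\R$-characters and that $\cG_i/\Gamma_i$ has finite invariant volume.

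With these hypotheses in place, Theorem \ref{T:WC1} applied to the weakly commensurable pair $(\Gamma_1, \Gamma_2)$ yields assertion (i): either $G_1$ and $G_2$ share the same Killing–Cartan type, or one is of type $\textsf{B}_n$ and the other of type $\textsf{C}_n$ with $n \geqslant 3$. Similarly, Theorem \ref{T:WC2} yields assertion (ii): the trace subfields $K_{\Gamma_1}$ and $K_{\Gamma_2}$ coincide.

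Thus the content of the theorem is essentially a packaging result, and the main obstacle is not in this proof itself but rather in Corollary \ref{C:LC-WC}, which supplies the passage from geometry to algebra. That corollary in turn rests on two subtle inputs: the fact (recalled just before Corollary \ref{C:LC-WC}) that lattices in absolutely simple real groups not isomorphic to $\mathrm{PGL}_2$ can be conjugated into $\mathrm{SL}_N(K)$ for a suitable number field $K$, so that they automatically possess property (A); and, outside the unconditional rank-one case, the application of Schanuel's conjecture via Proposition \ref{P:LC-WC} (together with the treatment of the exceptional case $(\mathcal{E})$ alluded to in the rank-one discussion). Once those ingredients are granted, the deduction of Theorem \ref{T:WC8} is a formal citation chain.
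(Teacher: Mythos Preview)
Your proposal is correct and follows essentially the same approach as the paper: the theorem is stated immediately after the sentence ``Now, applying Theorems \ref{T:WC1} and \ref{T:WC2} we obtain the following,'' with the passage from length-commensurability to weak commensurability supplied by Corollary \ref{C:LC-WC} as announced at the beginning of \S\ref{S:WC2}. Your added verification that lattices are finitely generated and Zariski-dense (via Borel density) fills in details the paper leaves implicit.
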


It should be pointed out that assuming Schanuel's conjecture in all
cases, one can prove this theorem (in fact, a much stronger
statement -- see Theorem \ref{T:Fields1}) assuming only that
$\Gamma_1$ and $\Gamma_2$ are finitely generated and Zariski-dense.

\vskip1mm

Next, using Theorems \ref{T:WC4} and \ref{T:WC5} we obtain
\begin{thm}\label{T:WC9}
Let $G_1$ and $G_2$ be connected absolutely simple real algebraic
groups, and let $\cG_i = G_i(\bR),$ for $i = 1,\,2.$ Then the set of
arithmetically defined locally symmetric spaces $\fX_{\Gamma_2}$ of
$\cG_2$, which are length-commensurable to a given arithmetically
defined locally symmetric space $\fX_{\Gamma_1}$ of $\cG_1$, is a
union of finitely many commensurability classes. It in fact consists
of a single commensurability class if $G_1$ and $G_2$ have the same
type different from $\textsf{A}_{n}$, $\textsf{D}_{2n+1}$, with  $n
> 1$, or $\textsf{E}_6.$
\end{thm}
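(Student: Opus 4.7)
The plan is to reduce the geometric statement to the algebraic structural results on weakly commensurable $S$-arithmetic subgroups (Theorems \ref{T:WC3}, \ref{T:WC4}, \ref{T:WC5}) via the translation provided by Corollary \ref{C:LC-WC}. Since $\Gamma_1$ and $\Gamma_2$ are arithmetic, they automatically satisfy property (A), and the exceptional case $(\mathcal{E})$ does not arise because arithmetic lattices are by construction defined over a number field. Therefore, if $\fX_{\Gamma_1}$ and $\fX_{\Gamma_2}$ are length-commensurable, Corollary \ref{C:LC-WC} (together with Schanuel's conjecture in higher rank, which absorbs all the transcendence-theoretic content of the problem) implies that $\Gamma_1$ and $\Gamma_2$ are weakly commensurable.

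Next I would represent the arithmetic subgroups in the form $(\mathscr{G}_i, K_i, S_i)$ as in \S 3.3. By Theorem \ref{T:WC1}, weak commensurability forces $G_1$ and $G_2$ either to have the same Killing-Cartan type or to form the $\textsf{B}_n/\textsf{C}_n$ pair with $n \geqslant 3$; if $G_1$ and $G_2$ fall into no such relation, then no length-commensurable $\fX_{\Gamma_2}$ exists and the assertion is vacuous. Theorem \ref{T:WC3} further forces $K_1 = K_2 =: K$ and $S_1 = S_2 =: S$. Hence the only remaining free datum among the triples is the $K$-form $\mathscr{G}_2$ of $\overline{G}_2$.

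Now I apply Theorem \ref{T:WC5}: the set of $K$-isomorphism classes of $K$-forms $\mathscr{G}_2$ for which $G_2(\R)$ contains a Zariski-dense $(\mathscr{G}_2, K, S)$-arithmetic subgroup weakly commensurable to $\Gamma_1$ is finite. By Proposition \ref{P:WC1}, fixing $(K, S)$, the $K$-isomorphism class of $\mathscr{G}_2$ is exactly what distinguishes commensurability classes (up to an $\R$-isomorphism between the adjoint groups), so the full collection of length-commensurable $\fX_{\Gamma_2}$ breaks into finitely many commensurability classes. For the second assertion, when $G_1$ and $G_2$ share a type other than $\textsf{A}_n$, $\textsf{D}_{2n+1}$ with $n > 1$, or $\textsf{E}_6$, Theorem \ref{T:WC4} strengthens the conclusion to $\mathscr{G}_1 \simeq \mathscr{G}_2$ over $K$, and Proposition \ref{P:WC1} then delivers a single commensurability class.

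The main conceptual obstacle --- the passage from the geometric length data to the algebraic eigenvalue data encoded by weak commensurability --- has already been resolved upstream in Corollary \ref{C:LC-WC}, and requires Schanuel's conjecture outside rank one. Given that reduction, the proof of Theorem \ref{T:WC9} is essentially a clean assembly of Theorems \ref{T:WC1}, \ref{T:WC3}, \ref{T:WC4}, \ref{T:WC5} together with Proposition \ref{P:WC1}; the only minor bookkeeping concerns verifying that ``arithmetic'' in the geometric sense coincides with the $(\mathscr{G}, K, S)$-arithmetic formalism of \S 3.3, which is built into the setup.
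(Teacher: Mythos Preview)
Your proposal is correct and follows essentially the same route as the paper: the paper states that Theorem \ref{T:WC9} is obtained by combining the reduction from length-commensurability to weak commensurability (Theorem \ref{T:LC-WC} and Corollary \ref{C:LC-WC}) with Theorems \ref{T:WC4} and \ref{T:WC5}. Your write-up is simply more explicit in spelling out the auxiliary role of Theorems \ref{T:WC1}, \ref{T:WC3} and Proposition \ref{P:WC1}, which the paper leaves implicit.
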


\vskip2mm

Furthermore, Theorems \ref{T:WC6}  and \ref{T:WC7} imply the
following rather surprising result which has so far defied all
attempts of a  purely geometric proof.
\begin{thm}\label{T:WC10}
Let $G_1$ and $G_2$ be connected absolutely simple real algebraic
groups, and let $\fX_{\Gamma_1}$ and $\fX_{\Gamma_2}$ be
length-commensurable locally symmetric spaces of $\cG_1$ and $\cG_2$
respectively, of finite volume. Assume that at least one of the
spaces is arithmetically defined. Then the other space is also
arithmetically defined, and the compactness of one of the spaces
implies the compactness of the other.
\end{thm}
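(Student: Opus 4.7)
The plan is to reduce the geometric statement to the algebraic structure theorems of Section \ref{S:WC-Res} via the bridge from length-commensurability to weak commensurability.

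First, since both $\fX_{\Gamma_i}$ have finite volume, each $\Gamma_i$ is a lattice in $\cG_i = G_i(\R)$, and (by Borel's density theorem, applied in the noncompact-type case to which the interesting part of the statement reduces) is Zariski-dense in $G_i$. Length-commensurability of $\fX_{\Gamma_1}$ and $\fX_{\Gamma_2}$ then implies, via Corollary~\ref{C:LC-WC} in the rank-one non-$(\mathcal{E})$ case and via Theorem~\ref{T:LC-WC} under Schanuel's conjecture in all remaining cases, that $\Gamma_1$ and $\Gamma_2$ are weakly commensurable.

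Next, assume without loss of generality that $\fX_{\Gamma_1}$ is arithmetically defined, so that $\Gamma_1$ is a $(\mathscr{G}_1, K, S)$-arithmetic subgroup for some triple. Since $\Gamma_2$ is a Zariski-dense lattice in $G_2(\R)$ weakly commensurable with the $(K,S)$-arithmetic $\Gamma_1$, Theorem~\ref{T:WC7} produces a triple $(\mathscr{G}_2, K, S)$ exhibiting $\Gamma_2$ as a $(K, S)$-arithmetic subgroup of $G_2(\R)$; in particular, $\fX_{\Gamma_2}$ is arithmetically defined. By Theorem~\ref{T:WC3}, the fields and sets of places appearing in the two triples are necessarily equal, so the comparison of ranks in the next step is well-posed.

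For the compactness assertion one invokes the classical Godement--Borel--Harish-Chandra criterion: an arithmetic locally symmetric space $\fX_{\Gamma_i}$ of finite volume attached to a triple $(\mathscr{G}_i, K, S)$ is compact precisely when $\mathscr{G}_i$ is $K$-anisotropic, equivalently $\mathrm{rk}_K\,\mathscr{G}_i = 0$, equivalently $\mathscr{G}_i(K)$ contains no nontrivial unipotent elements. Theorem~\ref{T:WC6}, applied to the common triple data produced above, asserts that $\mathrm{rk}_K\,\mathscr{G}_1 = \mathrm{rk}_K\,\mathscr{G}_2$. Hence $\mathscr{G}_1$ is $K$-anisotropic if and only if $\mathscr{G}_2$ is, so $\fX_{\Gamma_1}$ is compact if and only if $\fX_{\Gamma_2}$ is.

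Conceptually, the only delicate step is the first one, passing from length-commensurability to weak commensurability: outside the unconditional rank-one setting this requires Schanuel's conjecture, and one must separately address the exceptional case $(\mathcal{E})$ (where property~(A) can fail for a $\Gamma_1 \subset \mathrm{PGL}_2(\R)$) by invoking the refined results of \cite{PR-Fields} discussed in~\S\ref{S:Fields}. Once this translation is in hand, the remaining conclusions follow from a clean two-step application of Theorems~\ref{T:WC7} and~\ref{T:WC6} together with Godement's criterion, and no further geometric input is required --- which is precisely why the result has so far resisted a direct geometric proof.
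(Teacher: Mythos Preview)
Your proposal is correct and follows essentially the same route as the paper: the text immediately preceding the theorem states that it is obtained from Theorems~\ref{T:WC6} and~\ref{T:WC7}, after passing from length-commensurability to weak commensurability via Theorem~\ref{T:LC-WC}/Corollary~\ref{C:LC-WC} as described at the start of \S\ref{S:WC2}. Your invocation of Theorem~\ref{T:WC3} is harmless but redundant, since Theorem~\ref{T:WC7} already delivers $\Gamma_2$ as $(K,S)$-arithmetic for the \emph{same} $K$ and $S$.
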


In fact, if one of the spaces is compact and the other is not, the
weak length spectra $L(\fX_{\Gamma_1})$ and $L(\fX_{\Gamma_2})$ are
quite different -- see Theorem \ref{T:Fields5} for a precise
statement (we note that the proof of this result uses Schanuel's
conjecture in all cases).

\vskip2mm

Finally, we will describe some applications to isospectral compact
locally symmetric spaces. So, in the remainder of this section, the
locally symmetric spaces $\fX_{\Gamma_1}$ and $\fX_{\Gamma_2}$ as
above will be assumed to be {\it compact}. Then, as we discussed in
\S \ref{S:Intro}, the fact that $\fX_{\Gamma_1}$ and
$\fX_{\Gamma_2}$ are isospectral implies that $L(\fX_{\Gamma_1}) =
L(\fX_{\Gamma_2})$, so we can use our results on
length-commensurable spaces. Thus,  in particular we obtain the
following.
\begin{thm}\label{T:WC11}
If $\fX_{\Gamma_1}$ and $\fX_{\Gamma_2}$ are isospectral, and
$\Gamma_1$ is arithmetic, then so is $\Gamma_2$.
\end{thm}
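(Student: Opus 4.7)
The plan is to reduce the isospectrality hypothesis to length-commensurability and then invoke Theorem \ref{T:WC10}. There is essentially no new work; the statement is a streamlined repackaging of the preceding results in the compact setting.

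First I would unwind the hypotheses. Since $\fX_{\Gamma_1}$ and $\fX_{\Gamma_2}$ are compact and isospectral, the results of Gangolli \cite{Gang} (rank one) and of Duistermaat--Guillemin \cite{DG} and Duistermaat--Kolk--Varadarajan \cite{DKV} (general rank), recalled in \S \ref{S:Intro}, yield
$$
L(\fX_{\Gamma_1}) \;=\; L(\fX_{\Gamma_2}).
$$
In particular $\Q\cdot L(\fX_{\Gamma_1}) = \Q\cdot L(\fX_{\Gamma_2})$, so $\fX_{\Gamma_1}$ and $\fX_{\Gamma_2}$ are length-commensurable. Moreover, compactness of either space guarantees both have finite volume, so the finite-volume hypothesis of Theorem \ref{T:WC10} is satisfied.

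Next I would apply Theorem \ref{T:WC10} directly. Since $\Gamma_1$ is arithmetic by assumption, at least one of the two length-commensurable finite-volume spaces is arithmetically defined. That theorem then asserts that $\fX_{\Gamma_2}$ is also arithmetically defined, i.e., $\Gamma_2$ is arithmetic. (As a bonus, the same theorem reaffirms that $\fX_{\Gamma_2}$ is compact, consistent with our hypothesis.) This completes the proof.

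Since the argument is a clean chain of citations, there is no real obstacle: the only substantive input is the Gangolli/Duistermaat--Guillemin/Duistermaat--Kolk--Varadarajan passage from isospectrality to iso-length spectrality, and all of the group-theoretic work is absorbed into Theorem \ref{T:WC10} (which in turn rests on Theorems \ref{T:WC6} and \ref{T:WC7} about weak commensurability of $S$-arithmetic subgroups, and on the length-commensurability versus weak-commensurability dictionary of \S \ref{S:length}). One should perhaps remark in the write-up that when $\fX_{\Gamma_1}$ and $\fX_{\Gamma_2}$ are higher-rank, or when the exceptional case $(\mathcal{E})$ arises, the passage from length-commensurability to weak commensurability used inside Theorem \ref{T:WC10} relies on Schanuel's conjecture, so the same caveat is inherited here.
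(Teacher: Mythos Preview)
Your proof is correct and follows exactly the paper's approach: the paragraph immediately preceding Theorem~\ref{T:WC11} reduces isospectrality of compact locally symmetric spaces to equality of the weak length spectra via Gangolli and Duistermaat--Guillemin/Duistermaat--Kolk--Varadarajan, and then Theorem~\ref{T:WC10} does the rest. Your remark about the Schanuel caveat inherited through Theorem~\ref{T:WC10} is also appropriate and consistent with the paper's framing at the start of \S\ref{S:WC2}.
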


Thus, the Laplace spectrum can see if the fundamental group is
arithmetic or not -- to our knowledge, no results of this kind,
particularly for general locally symmetric spaces, were previously
known in spectral theory.

\vskip2mm

The following theorem settles the question ``Can one hear the shape
of a drum?'' for arithmetically defined compact locally symmetric
spaces.
\begin{thm}\label{T:WC12}
Let $\fX_{\Gamma_1}$ and $\fX_{\Gamma_2}$ be compact locally
symmetric spaces associated with absolutely simple real algebraic
groups $G_1$ and $G_2$, and assume that at least one of the spaces
is arithmetically defined. If $\fX_{\Gamma_1}$ and $\fX_{\Gamma_2}$
are isospectral then $G_1 = G_2 := G$. Moreover, unless $G$ is of
type $\textsf{A}_{n}$, $\textsf{D}_{2n+1}$ $(n > 1)$, or
$\textsf{E}_6$, the spaces $\fX_{\Gamma_1}$ and $\fX_{\Gamma_2}$ are
commensurable.
\end{thm}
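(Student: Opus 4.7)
The plan is to convert the isospectrality hypothesis into length-commensurability, and then feed this into the weak-commensurability machinery of \S\ref{S:WC}--\S\ref{S:WC-Res}. I would first invoke the theorem of Gangolli and of Duistermaat-Guillemin-Kolk-Varadarajan recalled in \S\ref{S:Intro}: any two compact isospectral locally symmetric spaces are iso-length-spectral, and in particular length-commensurable. Since arithmetic lattices in absolutely simple real groups possess property (A) (cf.\ the discussion preceding Corollary \ref{C:LC-WC}), Corollary \ref{C:LC-WC} then yields that $\Gamma_1$ and $\Gamma_2$ are weakly commensurable.

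Next, by Theorem \ref{T:WC10} the hypothesis that one of the $\Gamma_i$ is arithmetic forces both to be Zariski-dense $(\mathscr{G}_i,K_i,S_i)$-arithmetic subgroups, with $K_1=K_2=:K$ and $S_1=S_2=:S$ by Theorem \ref{T:WC3}. Theorem \ref{T:WC8} then gives the dichotomy: either $G_1$ and $G_2$ are of the same Killing-Cartan type, or one is of type $\textsf{B}_n$ and the other of type $\textsf{C}_n$ for some $n\geq 3$. To eliminate the $\textsf{B}_n/\textsf{C}_n$ ambiguity and to identify the specific real form of the common group, I would appeal to the additional spectral data carried by the Laplace spectrum beyond the length spectrum: Weyl's law gives $\dim\fX_{\Gamma_1}=\dim\fX_{\Gamma_2}$, and the heat-kernel expansion matches term by term, furnishing enough local geometric information (for symmetric spaces, determined by contractions of the parallel curvature tensor) to force the universal symmetric covers $\fX_1$ and $\fX_2$ to be isometric. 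This yields $G_1=G_2=:G$.

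For the \emph{moreover} assertion, once $G_1=G_2=G$ has type different from $\textsf{A}_n$, $\textsf{D}_{2n+1}$ $(n>1)$, and $\textsf{E}_6$, Theorem \ref{T:WC9} directly concludes that $\fX_{\Gamma_1}$ and $\fX_{\Gamma_2}$ lie in a single commensurability class, hence are commensurable.

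The main obstacle I anticipate is the identification of the specific real form $G$. The weak-commensurability theorems (Theorems \ref{T:WC3}--\ref{T:WC5}) constrain the $K$-isomorphism class of $\mathscr{G}_i$ only up to finitely many possibilities, and they do not by themselves distinguish the real form of $G_i$ at the archimedean place into which $K\hookrightarrow\R$ is fixed. One must lean on the genuine extra strength of isospectrality over length-commensurability, extracting from the heat kernel enough local geometric data to pin down the isometry type of the universal cover, and thus the underlying real algebraic group; verifying that these asymptotic invariants genuinely separate all the real forms that could a priori arise from a common $K$-form of the same type is the delicate step.
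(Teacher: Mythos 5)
Your framework for the first half is right and matches the paper: isospectrality plus compactness gives iso-length-spectrality (Gangolli, Duistermaat--Guillemin--Kolk--Varadarajan), hence length-commensurability; arithmetic lattices have property (A), so Corollary~\ref{C:LC-WC} yields weak commensurability of $\Gamma_1$ and $\Gamma_2$; Theorem~\ref{T:WC10} then makes both $\Gamma_i$ arithmetic, and Theorems~\ref{T:WC3} and \ref{T:WC8} give $K_1 = K_2$, $S_1 = S_2$, and the dichotomy of Killing--Cartan types. The final step (once $G_1 = G_2 = G$ of a permitted type, use Theorem~\ref{T:WC9} to get a single commensurability class) is also correct.

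The gap is in the middle, where you need $G_1 = G_2$ as real groups. You propose to extract this directly from the heat-kernel coefficients (Weyl's law for dimension, higher coefficients to pin down the curvature invariants of the universal cover), and you yourself flag that ``verifying that these asymptotic invariants genuinely separate all the real forms'' is the delicate step --- rightly so, since this is not established and is not obviously true in the generality you invoke. The paper's route is different and sharper. Its weak-commensurability machinery (in particular the finer statement recorded after Theorem~\ref{T:WC6}, which upgrades the rank equality to an isomorphism of Tits indices, combined with the fact that the fixed embedding $K \hookrightarrow \R$ forces the real forms of $\mathscr{G}_1$ and $\mathscr{G}_2$ to coincide) already pins down $G_1 = G_2$ except in one very specific residual case: $G_1$ and $G_2$ are the \emph{$\R$-split} forms of types $\textsf{B}_n$ and $\textsf{C}_n$ ($n \geqslant 3$). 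The paper then disposes of that single case not by a generic heat-kernel heuristic but by citing Yeung~\cite{SKY}, who showed by a concrete comparison of the traces of the heat operator that this particular pair of locally symmetric spaces cannot be isospectral. So your intuition that isospectrality (beyond length data) must be used is correct, but the argument requires (a) squeezing the ambiguity down to exactly the $\R$-split $\textsf{B}_n$/$\textsf{C}_n$ pair by algebraic methods first, and (b) a specific spectral computation for that pair, rather than a general claim that heat-kernel asymptotics determine the symmetric space.
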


It should be noted that our methods based on length-commensurability
or weak commensurability leave room for the following ambiguity in
the proof of Theorem \ref{T:WC12}: either $G_1 = G_2$ or $G_1$ and
$G_2$ are $\R$-split forms  of types $\textsf{B}_{n}$ and
$\textsf{C}_{n}$ for some $n \geqslant 3$ - and this ambiguity is
unavoidable, cf.\, the end of \S \ref{S:BC}. The fact that in the
latter case the locally symmetric spaces cannot be isospectral was
shown by Sai-Kee Yeung \cite{SKY} by comparing the traces of the
heat operator (without using Schanuel's conjecture), which leads to
the statement of the theorem given above.

\vskip3mm

\section{Absolutely almost simple algebraic groups having the same
maximal tori}\label{S:Tori}

\vskip2mm

The analysis of weak commensurability is related to another natural
problem in the theory of algebraic groups of characterizing
absolutely almost simple $K$-groups having the same
isomorphism/isogeny classes of maximal $K$-tori -- the exact nature
of this connection will be clarified in Theorem \ref{T:Gen4} and the
subsequent discussion.  Some aspects of this problem over local and
global fields were considered in \cite{Garge} and \cite{Kar}.
Another direction of research, which has already generated a number
of results (cf.\,\cite{Bayer}, \cite{Gar}, \cite{Lee},
\cite{PR-invol}) is the investigation of local-global principles for
embedding tori into absolutely almost simple algebraic groups as
maximal tori (in particular, for embedding of  commutative \'etale
algebras with involutive automorphisms into simple algebras with
involution); some number-theoretic applications of these results can
be found, for example, in \cite{Fiori}. A detailed discussion of
these issues would be an independent undertaking, so we will limit
ourselves here to the following theorem (cf.\,\cite[Theorem
7.5]{PR-IHES} and \cite[Proposition 1.3]{GarR}).
\begin{thm}\label{T:WC13}
{\rm (1)} Let $G_1$ and $G_2$ be connected absolutely almost simple
algebraic groups defined over a number field $K$, and let $L_i$ be
the smallest Galois extension of $K$ over which $G_i$ becomes an
inner form of a split group. If $G_1$ and $G_2$ have the same
$K$-isogeny classes of maximal $K$-tori then either $G_1$ and $G_2$
are of the same Killing-Cartan type, or one of them is of type
$\textsf{B}_{n}$ and the other is of type $\textsf{C}_{n}$,
and moreover, $L_1 = L_2$.

\vskip2mm

\noindent {\rm (2)} \parbox[t]{13.8cm}{Fix an absolutely almost
simple $K$-group $G$. Then the set of isomorphism classes of all
absolutely almost simple $K$-groups $G'$ having the same $K$-isogeny
classes of maximal $K$-tori is finite.}

\vskip2mm

\noindent {\rm (3)} \parbox[t]{13.8cm}{Fix an absolutely almost
simple simply connected $K$-group $G$ whose Killing-Cartan type is
different from $\textsf{A}_{n}$, $\textsf{D}_{2n+1}$ $(n >
1)$ or $\textsf{E}_6$. Then any $K$-form $G'$ of $G$ (in other
words, any absolutely almost simple simply connected $K$-group $G'$
of the \emph{same} type as $G$) that has the same $K$-isogeny
classes of maximal $K$-tori as $G$, is isomorphic to $G$.}
\end{thm}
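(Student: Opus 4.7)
The plan is to deduce all three parts of Theorem~\ref{T:WC13} from the machinery of weak commensurability already developed, in particular Theorems~\ref{T:WC1}, \ref{T:WC4}, \ref{T:WC5}, and the Tits-index refinement of Theorem~\ref{T:WC6} noted after its statement. The bridge is the following elementary observation: if $T_1 \subset G_1$ and $T_2 \subset G_2$ are $K$-isogenous maximal $K$-tori via a $K$-isogeny $\phi \colon T_1 \to T_2$, then for every semi-simple $\gamma_1 \in T_1(K)$ of infinite order the element $\phi(\gamma_1) \in T_2(K)$ is also of infinite order and is weakly commensurable to $\gamma_1$. Indeed, the dual map $\phi^{*} \colon X(T_2) \to X(T_1)$ has finite cokernel and satisfies $\chi(\phi(\gamma_1)) = \phi^{*}(\chi)(\gamma_1)$ for every $\chi \in X(T_2)$, so the subgroups of $\overline{K}^{\times}$ generated by the eigenvalues of $\gamma_1$ and of $\phi(\gamma_1)$ have nontrivial intersection.

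Using this, I would first reduce the hypothesis on $K$-isogeny classes to the weak commensurability of suitable $S$-arithmetic subgroups. Fix $\Gamma_1 = G_1(\cO_K(S))$ and $\Gamma_2 = G_2(\cO_K(S))$ for some large enough finite $S \supseteq V^{K}_{\infty}$. For a regular semi-simple $\gamma_1 \in \Gamma_1$ of infinite order --- such elements exist in abundance by the theory of generic elements in Zariski-dense subgroups, to be reviewed in \S\ref{S:Gen} --- the centralizer $T_1 = Z_{G_1}(\gamma_1)^{\circ}$ is a maximal $K$-torus, and by hypothesis there exists a $K$-isogeny $\phi \colon T_1 \to T_2$ onto a maximal $K$-torus $T_2 \subset G_2$. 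Since $\gamma_1$ is $S$-integral in $T_1$, the image $\phi(\gamma_1)$ is $S$-integral in $T_2$; after possibly replacing $\gamma_1$ by a suitable power and enlarging $S$ by finitely many places one arranges $\phi(\gamma_1^n) \in \Gamma_2$. The bridging observation then shows that $\Gamma_1$ and $\Gamma_2$ are weakly commensurable, the reverse direction being symmetric.

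Granted this reduction, the three parts follow. For Part~(1), Theorem~\ref{T:WC1} immediately yields the Killing-Cartan-type dichotomy. When the common type has trivial outer automorphism group --- in particular in the $\textsf{B}_n$/$\textsf{C}_n$ case --- we have $L_i = K$ and $L_1 = L_2$ is automatic; for the remaining types ($\textsf{A}_n$ with $n \geqslant 2$, $\textsf{D}_n$, and $\textsf{E}_6$) the equality follows from the stronger form of Theorem~\ref{T:WC6} noted immediately after its statement, which gives the $K$-isomorphism of the Tits indices of $\mathscr{G}_1$ and $\mathscr{G}_2$; the Tits index records precisely the $*$-action of $\Ga(\overline{K}/K)$ on the Dynkin diagram, whose kernel fixes $L_i$. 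Part~(2) is a direct consequence of Theorem~\ref{T:WC5}. For Part~(3), combining the type equality from Part~(1) with Theorem~\ref{T:WC4} yields the $K$-isomorphism $\mathscr{G}' \simeq G$, since the types excluded in Part~(3) are precisely those excluded in Theorem~\ref{T:WC4}.

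The main obstacle I foresee is the reduction step, and specifically ensuring that a single finite set $S$ suffices uniformly so that every pushforward $\phi(\gamma_1^n)$ lands inside the fixed arithmetic group $\Gamma_2$ in its ambient matrix realization. Here the theory of generic tori plays a crucial role: it allows one to control the ``bad'' primes and guarantees that the family $\{Z_{G_1}(\gamma_1)^{\circ} : \gamma_1 \in \Gamma_1 \text{ regular semi-simple of infinite order}\}$ is rich enough to transfer the full isogeny-class information into the weak-commensurability framework. The subordinate but also delicate point in Part~(1) is the passage from ``isomorphic Tits indices'' to $L_1 = L_2$, which is clean once the definitions are unpacked but depends on the refined form of Theorem~\ref{T:WC6} rather than the bare $K$-rank equality stated there.
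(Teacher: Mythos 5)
Your bridge observation is correct: a $K$-isogeny $\phi\colon T_1\to T_2$ with dual $\phi^{*}\colon X(T_2)\hookrightarrow X(T_1)$ of finite cokernel, applied to an infinite-order $\gamma_1\in T_1(K)$, produces an infinite-order $\phi(\gamma_1)\in T_2(K)$ weakly commensurable to $\gamma_1$ (pick $\chi_2\in X(T_2)$ with $\chi_2(\phi(\gamma_1))\neq 1$ and set $\chi_1=\phi^{*}\chi_2$). The application of Theorems \ref{T:WC1}, \ref{T:WC4}, \ref{T:WC5} and the Tits-index refinement of Theorem \ref{T:WC6}, \emph{granted} the reduction, is also sound, including the passage from isomorphic Tits indices to $L_1=L_2$ via the $*$-action (conjugation in $\mathrm{Aut}(\mathrm{Dyn})$ preserves the kernel of the $*$-action, hence the fixed field).

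However, the reduction step itself has a real gap that you flag but do not close. You need a \emph{single} finite $S$ such that for every regular semi-simple $\gamma_1\in G_1(\mathcal{O}_K(S))$ of infinite order, some power of the pushforward $\phi(\gamma_1)$ lands in $G_2(\mathcal{O}_K(S))$ for a fixed matrix realization of $G_2$. But the isogeny $\phi$ is only \emph{given} by the hypothesis, not constructed, and its degree and the denominators appearing in its defining formulas vary with $T_1$ with no a priori bound; even though $\phi(\gamma_1)$ has $S$-integral eigenvalues, its matrix entries need not lie in $\mathcal{O}_K(S)$, and $T_2(K)/T_2(\mathcal{O}_K(S))$ is in general far from torsion, so no power of $\phi(\gamma_1)$ need be $S$-integral. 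Appealing to ``generic tori controlling the bad primes'' does not address this, since the obstruction is not at finitely many identifiable places but in the unbounded variation of the isogenies as $T_1$ ranges over all (or even just all generic) maximal $K$-tori of $G_1$. The intended route, which the paper attributes to \cite[Thm.\,7.5]{PR-IHES} and \cite[Prop.\,1.3]{GarR}, is not to manufacture a pair of literally weakly commensurable $S$-arithmetic groups, but to observe that the proofs of the relevant weak-commensurability theorems factor through a weaker hypothesis that is directly supplied by ``same $K$-isogeny classes of maximal $K$-tori'': namely, that generic maximal $K$-tori of $G_1$ are $K$-isogenous to (necessarily generic) maximal $K$-tori of $G_2$ and vice versa, after which the Weyl-group-order argument (for the type dichotomy), the Tits-index/Galois-cohomology analysis (for $L_1=L_2$ and Part (3)), and the finiteness machinery (for Part (2)) apply with no further $S$-arithmetic input needed. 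Your plan should be recast at that level rather than forcing an $S$-arithmetic reduction that does not go through.
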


The construction described in \cite[\S 9]{PR-IHES} shows that the
types excluded in (3) are honest exceptions, i.e., for each of those
types one can construct non-isomorphic absolutely almost simple
simply connected $K$-groups $G_1$ and $G_2$ of this type over a
number field $K$ that have the same isomorphism classes of maximal
$K$-tori. On the other hand, the analysis of the situation where
$G_1$ and $G_2$ are of types $\textsf{B}_{n}$ and $\textsf{C}_{n}$,
respectively, over a number field $K$ and have the same
isomorphism/isogeny classes of maximal $K$-tori is given in Theorem
\ref{T:BC2} below (cf. \cite[Theorem 1.4 and 1.5]{GarR})

Of course, the question about determining absolutely almost simple algebraic
$K$-groups by their maximal $K$-tori makes sense over general
fields. It is particularly interesting for division algebra where it
can be reformulated as the following question which is somewhat
reminiscent of Amitsur's famous theorem on generic splitting fields
(cf.\,\cite{Ami}, \cite{GiSz}): {\it What can one say about two
finite-dimensional central division algebras $D_1$ and $D_2$ over
the same field $K$ given the fact that they have the same
isomorphism classes of maximal subfields?} For recent results on
this problem see \cite{CR2}, \cite{GarS}, \cite{KraMcK}, \cite{R2}.

\section{Weakly commensurable subgroups in groups of types
$\textsf{B}$ and $\textsf{C}$}\label{S:BC}

Let $G_1$ and $G_2$ be absolutely almost simple algebraic groups
over a field $K$ of characteristic zero. According to Theorem
\ref{T:WC1}, finitely generated weakly commensurable Zariski-dense
subgroups $\Gamma_1 \subset G_1(K)$ and $\Gamma_2 \subset G_2(K)$
can exist only if $G_1$ and $G_2$ are of the same Killing-Cartan
type or one of them is of type $\textsf{B}_{n}$ and the other is of
type $\textsf{C}_{n}$ for some $n \geqslant 3$. Moreover, the
results we described in \S \ref{S:WC-Res} provide virtually complete
answers to the key questions about weakly commensurable
$S$-arithmetic subgroups in the case where $G_1$ and $G_2$ are {\it
of the same type}. In this section, we will discuss recent results
\cite{GarR} that determine weakly commensurable arithmetic subgroups
when $G_1$ is {\it of type} $\textsf{B}_{n}$ and $G_2$ is {\it of
type} $\textsf{C}_{n}$ $(n \geqslant 3)$.

\vskip2mm

First of all, it should be pointed out that $S$-arithmetic subgroups
in groups of types $\textsf{B}_{n}$ and $\textsf{C}_{n}$ can indeed
be weakly commensurable. The underlying reason is that if $G_1$ is
a split adjoint  group of type $\textsf{B}_{n}$ and $G_2$ is a
split simply connected group of type $\textsf{C}_{n}$ $(n \geqslant
2)$ over any field $K$ of characteristic $\neq 2$,  then $G_1$ and
$G_2$ have the {\it same isomorphism classes of maximal} $K$-{\it
tori}. For the reader's convenience we briefly recall the
Galois-cohomological proof of this fact given in \cite[Example
6.7]{PR-IHES}.

It is well-known that for any semi-simple $K$-group $G$ there is a
natural bijection between the set of $G(K)$-conjugacy classes of
maximal $K$-tori of $G$ and the set
$$\sC_K := \mathrm{Ker}\left(H^1(K , N) \to H^1(K , G)\right),$$
where $T$ is a maximal $K$-torus of $G$ and  $N$ is the normalizer
of $T$ in $G$ (cf.\,\cite[Lemma 9.1]{PR-IHES}). Let $W = N/T$ be the
corresponding Weyl group and introduce the following natural maps in
Galois cohomology:
$$
\theta_K \colon H^1(K , N) \to H^1(K , W) \ \ \ \text{and} \ \ \
\nu_K \colon H^1(K , W) \to H^1(K , \mathrm{Aut}\: T).
$$
To apply  these considerations to the groups $G_1$ and $G_2$, we
will denote by $T_i$ a fixed maximal $K$-split torus of $G_i$ and
let $N_i$, $W_i$, $\sC^{(i)}_K$, $\theta^{(i)}_K$ and  $\nu_K^{(i)}$ be
the corresponding objects attached to $G_i$. It follows from an explicit
description of the root systems of types $\textsf{B}_n$ and
$\textsf{C}_n$ that there exist $K$-isomorphisms $\varphi
\colon T_1 \to T_2$ and $\psi \colon W_1 \to W_2$ such that for the
natural action of $W_i$ on $T_i$ we have
$$
\varphi(w \cdot t) = \psi(w) \cdot \varphi(t) \ \ \ \text{for all} \
\ t \in T_1, \ w \in W_1.
$$
Since $G_i$ is $K$-split, we have $\theta^{(i)}_K(\sC^{(i)}_K) =
H^1(K , W_i)$ (cf.\,\cite{Gille}, \cite{Kot}, \cite{Rag}). So, $\psi$
induces a natural bijection between $\theta^{(1)}_K(\sC^{(1)}_K)$
and $\theta^{(2)}_K(\sC^{(2)}_K)$. Finally, we observe that if $S_i$
is a maximal $K$-torus of $G_i$ in the $G_i(K)$-conjugacy class
corresponding to $c_i \in \sC^{(i)}_K$, then the $K$-isomorphism
class of $S_i$ is determined by
$\nu^{(i)}_K(\theta^{(i)}_K(c_i))$, and if
$\psi(\theta^{(1)}_K(c_1)) = \theta^{(2)}_K(c_2)$ then $S_1$ and
$S_2$ are $K$-isomorphic. It follows that $G_1$ and $G_2$ have the
same classes of maximal $K$-tori, as required.

\vskip2mm

Subsequently, in \cite{GarR} a more explicit explanation of this
fact was given. More precisely, let $A$ be a central simple algebra
over $K$ with a $K$-linear involution $\tau$ (involution of the
first kind). We recall that $\tau$ is called {\it orthogonal} if
$\dim_K A^{\tau} = n(n + 1)/2$ and {\it symplectic} if $\dim_K
A^{\tau} = n(n - 1)/2$. Furthermore, if $\tau$ is orthogonal and $n
= 2m + 1$ $(m \geqslant 2)$ then $A = M_n(K)$ and the corresponding
algebraic group $G = \mathrm{SU}(A , \tau)$ coincides with the
orthogonal group $\mathrm{SO}_n(q)$ of a nondegenerate
$n$-dimensional quadratic form $q = q_{\tau}$ over $K$, hence  is a
simple adjoint algebraic $K$-group of type $\textsf{B}_{m}$ (note
that the $K$-rank of $G$ equals the Witt index of $q$). If $\tau$ is
symplectic then necessarily $n = 2m$ and $G = \mathrm{SU}(A , \tau)$
is an almost simple simply connected $K$-group of type
$\textsf{C}_{m}$; moreover $G$ is $K$-split  if and only if $A =
M_n(K)$, in which case $G$ is of course isomorphic to
$\mathrm{Sp}_{2m}$. Next, in all cases, any maximal $K$-torus $T$ of
$G$ has the form $T = \mathrm{SU}(E , \sigma)$ where $E$ is a
$\tau$-invariant $n$-dimensional commutative \'etale $K$-subalgebra
of $A$ such that for $\sigma = \tau \vert E$ we have
\begin{equation}\label{E:dim}
\dim E^{\sigma} = \left[  \frac{n+1}{2} \right].
\end{equation}
So, the question whether $G = \mathrm{SU}(A , \tau)$, with $A$ and
$\tau$ as above, has a maximal $K$-torus of a specific type can be
reformulated as follows: Let $(E , \sigma)$ be an $n$-dimensional
commutative \'etale $K$-algebra with an involutive $K$-automorphism $\sigma$
satisfying (\ref{E:dim}). When does there exist an embedding $(E ,
\sigma) \hookrightarrow (A , \tau)$ as algebras with involution?
While in the general case this question is nontrivial (cf.
\cite{PR-invol}), the answer in the case where the group $G$ splits
over $K$ is quite straightforward.
\begin{prop}\label{P:Stein}
{\rm (\cite[2.3 and 2.5]{GarR})} Let $A = M_n(K)$ with a $K$-linear
involution $\tau$, and let $(E , \sigma)$ be an $n$-dimensional
commutative \'etale
$K$-algebra with involution satisfying (\ref{E:dim}). In each of the
following situations

\vskip2mm

\noindent {\rm (1)} $\tau$ is symplectic,

\vskip1mm

\noindent {\rm (2)} \parbox[t]{13.5cm}{$n = 2m + 1$ and $\tau$ is
orthogonal such that the corresponding quadratic form $q_{\tau}$ has
Witt index $m$,}

\vskip1.5mm

\noindent there exists a $K$-embedding $(E , \sigma) \hookrightarrow
(A , \tau)$.
\end{prop}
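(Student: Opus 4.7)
My plan is to translate the embedding problem into constructing a suitable nondegenerate $K$-bilinear form on $E$, and to exploit the fact that in both situations the group $\mathrm{SU}(A,\tau)$ is $K$-split. Since $\dim_K E = n$, the multiplication action realizes $E$ as a $K$-subalgebra of $\mathrm{End}_K(E) \cong A$, and its image is $\tau$-stable with $\tau|_E = \sigma$ precisely when there exists a nondegenerate $K$-bilinear form $h$ on $E$ satisfying $h(ex, y) = h(x, \sigma(e)y)$ for all $e, x, y \in E$ and $K$-isometric to the bilinear form defining $\tau$. By nondegeneracy of the trace form of $E/K$, every form with this sesquilinearity property is of the shape
\[
h_\lambda(x, y) := \Tr_{E/K}(\lambda x \sigma(y)), \qquad \lambda \in E,
\]
with $h_\lambda$ nondegenerate iff $\lambda \in E^\times$, symmetric iff $\sigma(\lambda) = \lambda$, and alternating iff $\sigma(\lambda) = -\lambda$.

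Next, the hypothesis $\dim_K E^\sigma = \lfloor (n+1)/2 \rfloor$ forces each simple constituent of $E$ to fall into one of three $\sigma$-stable types: a factor on which $\sigma$ acts trivially; a Galois quadratic extension $F/F^\sigma$; or a swapped pair $F \oplus F$ with $\sigma(a,b) = (b,a)$. A short dimension count shows that the total $K$-dimension of constituents of the first type equals $0$ in case (1) and $1$ (necessarily a copy of $K$) in case (2), while $\dim_K E^{-\sigma} = m$ in both cases. For case (1), I would take a unit $\lambda \in E^{-\sigma}$ assembled constituent by constituent -- a nonzero trace-zero element on each Galois quadratic factor and $(1,-1)$ on each swapped pair. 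Since $\sigma(\lambda) = -\lambda$, a short calculation shows $h_\lambda$ vanishes identically on $E^\sigma \times E^\sigma$, making $E^\sigma$ a Lagrangian of $K$-dimension $m = n/2$. Hence $h_\lambda$ is hyperbolic, and by uniqueness of nondegenerate alternating forms it is $K$-isometric to the form defining $\tau$.

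For case (2), I would take $\lambda \in (E^\sigma)^\times$ with $K$-component $\lambda_0 \in K^\times$ to be specified, and decompose $E = K \oplus E'$. Since $\sigma(\lambda) = \lambda$, $h_\lambda$ vanishes on $E^\sigma \times E^{-\sigma}$, so $h_\lambda = q^+ \perp (-q^-)$ where $q^\pm$ are the restrictions of $h_\lambda$ to $E^{\pm\sigma}$. Choosing an $E^\sigma$-module generator $\alpha \in E^{-\sigma}$ and writing $\delta := \alpha^2 \in E^\sigma$, the pieces $q^+|_{(E')^\sigma}$ and $q^-$ identify respectively with the $K$-trace (Scharlau transfer along $E^\sigma \to K$) of the $E^\sigma$-forms $2\lambda\langle 1\rangle$ and $2\lambda\delta\langle 1\rangle$, so $h_\lambda = \langle\lambda_0\rangle \perp s_*(2\lambda \langle 1, -\delta\rangle_{E^\sigma})$. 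Constituent by constituent, $\delta$ equals $1$ on each swapped pair, making $\langle 1, -\delta\rangle = \langle 1, -1\rangle$ literally hyperbolic and hence contributing a hyperbolic piece; on each Galois quadratic constituent $F/F^\sigma$, $\delta$ is the non-square representing the discriminant of $F/F^\sigma$, and the associated contribution is not individually hyperbolic but combines with the $\langle\lambda_0\rangle$-term to raise the Witt index. Using this structure together with the freedom to tune $\lambda_0$, I would show that $\lambda$ can be chosen so $h_\lambda$ has Witt index $m$ and discriminant congruent to $\disc(q_\tau)$ modulo squares; by the classification of nondegenerate quadratic forms of maximal Witt index in dimension $2m+1$, this forces $h_\lambda \cong q_\tau$, yielding the embedding.

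The main obstacle is the Witt-index optimization in case (2): because the Scharlau transfer of $\langle 1, -\delta\rangle$ need not be hyperbolic when $\delta$ is a non-square (as small-rank computations over $\R$ already illustrate), isotropy of the full $h_\lambda$ must emerge from the interplay of all summands, mediated by the choice of $\lambda_0$. What saves the day is the abundant freedom in $\lambda$, which ranges over the unit group of an \'etale $K$-algebra, together with the fact that the maximal-Witt-index hypothesis on $q_\tau$ pins down its isometry class by the discriminant alone, eliminating any need to match higher Witt invariants. Case (1), the parametrization, and the structural analysis are essentially formal.
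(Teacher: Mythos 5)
Your reduction to trace forms is correct and efficient: the regular representation realizes every embedding $(E,\sigma)\hookrightarrow(M_n(K),\tau)$, the parametrization by $h_\lambda(x,y)=\Tr_{E/K}(\lambda x\sigma(y))$ with $\lambda\in E^\times$ is right (nondegeneracy of the trace form gives it), and the dimension count forcing no trivial factor in case (1) and exactly one copy of $K$ in case (2) is correct. Case (1) is complete as written: with $\lambda\in(E^{-\sigma})^\times$ the subspace $E^\sigma$ is Lagrangian, so $h_\lambda$ is a hyperbolic alternating form, and nondegenerate alternating forms of a given dimension are unique up to isometry.

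Case (2), however, has a real gap exactly at the point you flag. You assert that $\lambda$ "can be chosen so $h_\lambda$ has Witt index $m$," but you give no argument, and the heuristic of "abundant freedom in $\lambda$" together with "tuning $\lambda_0$" does not obviously close it. Note that the $K$-factor contributes only the rank-one summand $\langle\lambda_0\rangle$, and adjoining a rank-one piece to a quadratic form can increase the Witt index by at most $1$. So your strategy forces you to prove that $\lambda'\in(E'^\sigma)^\times$ can be chosen so that the $2m$-dimensional transferred form $s_*\bigl(2\lambda'\langle 1,-\delta\rangle_{E'^\sigma}\bigr)$ already has Witt index $\geqslant m-1$; this is a genuine statement about Scharlau transfers of rank-one Hermitian forms over an arbitrary field of characteristic zero, not a formality. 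Your own observation that a single $\C/\R$ factor never transfers to a hyperbolic form shows the Witt index need not be maximal factor-by-factor, and the required global bound $\geqslant m-1$ — together with the final discriminant matching via $\lambda_0$ — is precisely the content that is missing. (Matching the Witt class in $W(K)$ is indeed sufficient by Witt cancellation, as you say, but solvability of the resulting equation $\langle\lambda_0\rangle+\sum_j s_*(\mu_j\langle1,-\delta_j\rangle)=\langle\disc q_\tau\rangle$ in $W(K)$ is the theorem, not an observation.) A minor point: $\alpha$ should be taken as an $(E')^\sigma$-module generator of $(E')^{-\sigma}$, not an $E^\sigma$-module generator of $E^{-\sigma}$, since $E^{-\sigma}$ has rank $0$ over the $K$-factor of $E^\sigma$. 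The cited source proves the statement by an elementary explicit construction in \cite[\S 2]{GarR}; your Witt-theoretic route is workable in spirit but needs the transfer estimate above to be an actual proof.
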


(This proposition should be viewed as an analogue for algebras with
involution of the following result of Steinberg \cite{Stein}: Let
$G_0$ be a {\it quasi-split} simply connected almost simple
algebraic group over a field $K$. Then given an inner form $G$ of
$G_0$, {\it any maximal} $K$-{\it torus} $T$ {\it of} $G$ {\it
admits a} $K$-{\it embedding into} $G_0$. While the proof of this
result is rather technical, the proof of Proposition \ref{P:Stein},
as well as of the corresponding assertion for the algebras with
involution involved in the description of algebraic groups of type
$\textsf{A}$ and $\textsf{D}$ is completely elementary -- see
\cite[\S 2]{GarR}.)

Now, fix $n \geqslant 2$ and let $A_1 = M_{n_1}(K)$, where $n_1 = 2n
+ 1$, with an orthogonal involution $\tau_1$ such that the Witt
index of the corresponding quadratic form $q_{\tau_1}$ is $n$, and
let $A_2 = M_{n_2}(K)$, where $n_2 = 2n$, with a symplectic
involution $\tau_2$. According to Proposition \ref{P:Stein}, for $i
= 1, 2$, the maximal $K$-tori of $G_i = \mathrm{SU}(A_i , \tau_i)$
are of the form $T_i = \mathrm{SU}(E_i , \sigma_i)$ for a commutative \'etale $K$-algebra $E_i$
of dimension $n_i$ with an involution
$\sigma_i$ satisfying (\ref{E:dim}). On the other hand, the
correspondence
$$
(E_2 , \sigma_2) \mapsto (E_1 , \sigma_1) := (E_2 \times K \, , \,
\sigma_2 \times \mathrm{id}_K)
$$
defines a natural bijection between the isomorphism classes of commutative \'etale
$K$-algebras with involution satisfying (\ref{E:dim}), of dimension
$n_2$ and $n_1$, respectively. Since $\mathrm{SU}(E_1 , \sigma_1) =
\mathrm{SU}(E_2 , \sigma_2)$ in these notations, we again obtain
that $G_1$ and $G_2$ have the same isomorphism classes of maximal
$K$-tori (cf.\,\cite{GarR}, Remark 2.6).

\vskip2mm

Now, let $K$ be a number field and $S$ be any finite set of places
of $K$ containing the set $V_{\infty}^K$ of archimedean places.
Furthermore, let $G_1$ be a split adjoint $K$-group of type
$\textsf{B}_{n}$, and $G_2$ be a split simply connected $K$-group of
type $\textsf{C}_{n}$ $(n \geqslant 2)$. Then the fact, discussed
above, that $G_1$ and $G_2$ have the same isomorphism classes of
maximal $K$-tori immediately implies that the $S$-arithmetic
subgroups in $G_1$ and $G_2$ are weakly commensurable
(cf.\,\cite[Examples 6.5 and 6.7]{PR-IHES}).

\vskip2mm

A complete determination of weakly commensurable $S$-arithmetic
subgroups in  algebraic groups $G_1$ and $G_2$ of types
$\textsf{B}_n$ and $\textsf{C}_n$ $(n \geqslant 3)$ respectively was
recently obtained  by Skip Garibaldi and the second-named author
\cite{GarR}. To formulate the result we need the following {
definition. Let $\sG_1$ and $\sG_2$ be absolutely almost simple
algebraic groups of types $\textsf{B}_{n}$ and $\textsf{C}_{n}$ with
$n \geqslant 2$, respectively, over a number field $K$. We say that
$\sG_1$ and $\sG_2$ are {\it twins} (over $K$) if for each place $v$
of $K$, either both groups are split or both are anisotropic over
the completion $K_v$. (We note that since groups of these types
cannot be anisotropic over $K_v$ when $v$ is nonarchimedean, our
condition effectively says that $\sG_1$ and $\sG_2$ must be
$K_v$-split for {\it all} nonarchimedean $v$.)

\begin{thm}\label{T:BC1}
{\rm (\cite[Theorem 1.2]{GarR})} Let $G_1$ and $G_2$ be absolutely
almost simple algebraic groups over a field $F$ of characteristic
zero having Killing-Cartan types $\textsf{B}_{n}$ and
$\textsf{C}_{n}$ $(n \geqslant 3)$ respectively, and let
$\Gamma_i$ be a Zariski-dense $(\sG_i, K, S)$-arithmetic subgroup of
$G_i(F)$ for $i = 1, 2$. Then $\Gamma_1$ and $\Gamma_2$ are weakly
commensurable if and only if the groups $\sG_1$ and $\sG_2$ are
twins.
\end{thm}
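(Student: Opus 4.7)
The statement is a biconditional, and I would prove each direction by translating weak commensurability into the language of maximal $K$-tori and étale subalgebras with involution, as set up in Proposition \ref{P:Stein} and the surrounding discussion.

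For the ``if'' direction (twins implies weakly commensurable), my plan is to globalize the local matching provided by twin-ness. Realize $\sG_i$ as $\mathrm{SU}(A_i,\tau_i)$ with $(A_1,\tau_1)$ of type $\textsf{B}_n$ and $(A_2,\tau_2)$ of type $\textsf{C}_n$. At every nonarchimedean $v$, the twin condition forces both $\sG_i$ to be $K_v$-split, so by Proposition \ref{P:Stein} any suitable local étale algebra with involution embeds into both $(A_1,\tau_1)\otimes K_v$ and $(A_2,\tau_2)\otimes K_v$; the bijection $(E_2,\sigma_2)\mapsto(E_2\times K,\sigma_2\times\id)$ matches local maximal tori between the two groups. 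At archimedean places the twin condition is designed to handle the compact vs.\ split dichotomy symmetrically. Next, invoke a Hasse-type local-global principle for embeddings of étale algebras with involution (in the spirit of \cite{PR-invol}, \cite{GarR}) to lift these local matchings to a bijection between large families of maximal $K$-tori of $\sG_1$ and of $\sG_2$ that are pairwise $K$-isomorphic. Finally, appeal to the existence of generic elements in finitely generated Zariski-dense subgroups (from \S\ref{S:Gen}) to conclude that every generic semi-simple element of $\Gamma_1$ lies in such a matched torus $T_1$, is conjugate (over $F$) into the corresponding $T_2\subset\sG_2(K)\subset G_2(F)$, and thus shares a nontrivial character value with some element of $\Gamma_2$, giving weak commensurability; the symmetric argument starting from $\Gamma_2$ completes the proof.

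For the ``only if'' direction (weakly commensurable implies twins), first apply Theorem \ref{T:WC3} to get $K_1=K_2=:K$ and $S_1=S_2=:S$, and Theorem \ref{T:WC6} (together with the strengthening to Tits indices mentioned afterwards) to obtain $\rk_K \sG_1=\rk_K \sG_2$. To upgrade this global equality to the pointwise local condition of twin-ness, I would argue by contradiction: suppose there is a place $v$ at which $\sG_1\otimes K_v$ and $\sG_2\otimes K_v$ are not simultaneously split or anisotropic (necessarily $v$ archimedean, by the footnote in \S 6). Using generic elements in $\Gamma_1$ combined with strong approximation, produce a semi-simple $\gamma_1\in\Gamma_1$ lying in a maximal $K$-torus $T_1\subset\sG_1$ whose local structure at $v$ reflects the local Tits index of $\sG_1$ and whose character values generate a ``rich'' multiplicative group. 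Weak commensurability with some $\gamma_2\in\Gamma_2$ forces, through the reformulation of weak commensurability in terms of characters on maximal tori, the existence of a maximal $K$-torus $T_2\subset\sG_2$ and a character identity linking the two. Analyzing this identity place-by-place, and using the $(E_2,\sigma_2)\leftrightarrow(E_2\times K,\sigma_2\times\id)$ correspondence for étale algebras with involution, shows that $T_2\otimes K_v$ is forced to have the same anisotropic kernel/Witt-type data as $T_1\otimes K_v$, contradicting the assumed mismatch of $\sG_1$ and $\sG_2$ at $v$.

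The main obstacle, as I see it, is the reverse direction at a single bad archimedean place: extracting a purely local obstruction from the global character-theoretic identity that defines weak commensurability. This requires a careful choice of generic element whose eigenvalues simultaneously control the $K$-isomorphism class of the ambient torus and isolate the $K_v$-structure, and it is here that the explicit étale-subalgebra description from \cite{GarR} (in place of the more abstract Galois-cohomological description) is decisive, since it converts the condition ``both split at $v$ or both anisotropic at $v$'' into a concrete compatibility of $(E_i,\sigma_i)\otimes K_v$ that can be read off from the eigenvalues of a single generic element.
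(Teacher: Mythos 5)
Your overall architecture matches the one indicated in the paper (and in \cite{GarR}): for sufficiency, the \'etale-algebras-with-involution description of maximal $K$-tori together with the local-global embedding results of \cite{PR-invol}; for necessity, generic elements and tori together with the classification of maximal tori in real groups (the \cite{DjTh} ingredient). However, there are a few genuine errors and gaps.

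First, the parenthetical ``(necessarily $v$ archimedean, by the footnote in \S 6)'' in the necessity direction is wrong, and it materially shapes the rest of your argument. The footnote says only that a group of type $\textsf{B}_n$ or $\textsf{C}_n$ cannot be \emph{anisotropic} over a nonarchimedean $K_v$; it does \emph{not} say the twin condition is automatic there. On the contrary, the footnote explicitly concludes that the twin condition at nonarchimedean $v$ amounts to \emph{both} groups being $K_v$-split, and this is a nontrivial constraint (e.g.\ a $(2n+1)$-dimensional quadratic form over a $p$-adic field may have anisotropic kernel of dimension $3$, so $\sG_1$ need not be $K_v$-split, and likewise $A_2$ may be $M_m(D)$ with $D$ a quaternion division algebra). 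So the ``bad place'' you argue about can be nonarchimedean, and your proof must cover that case; the right tool is Corollary \ref{C:Gen1}, which lets you choose a generic torus $T_1$ meeting $\Gamma_1$ with prescribed behavior at $v$ (e.g.\ maximal $K_v$-split rank), combined with the Isogeny Theorem \ref{T:Gen4}. You never actually invoke Theorem \ref{T:Gen4}, but it is the precise mechanism that converts weak commensurability of a generic element into a $K$-isogeny $T_2 \to T_1$ compatible with the root systems (including the explicit long/short root scaling in the $\textsf{B}/\textsf{C}$ case); the vague ``character identity linking the two'' should be replaced by a citation of this theorem. Relatedly, the Tits-index strengthening you cite after Theorem \ref{T:WC6} is stated only for $G_1$ and $G_2$ of the \emph{same} type, so it does not apply to the $\textsf{B}_n$/$\textsf{C}_n$ cross-type situation you are in.

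Second, in the sufficiency direction you stop too early: after transporting a semi-simple $\gamma_1 \in \Gamma_1 \cap T_1(K)$ under the $K$-isomorphism $T_1 \simeq T_2$ you get an element $\gamma_1' \in T_2(K) \subset \sG_2(K)$, but $\gamma_1'$ is not yet in $\Gamma_2$, and weak commensurability of the \emph{subgroups} requires producing an actual element of $\Gamma_2$. You need the additional observation that $\gamma_1'$ is $S$-integral up to finite index (because $\gamma_1$ is, and a $K$-torus isomorphism preserves commensurability classes of $S$-arithmetic subgroups of the torus), so that some power of $\gamma_1'$ lands in $\Gamma_2 \cap T_2(K)$, giving the required $\gamma_2$. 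You should also check that this argument handles \emph{all} semi-simple elements of $\Gamma_1$ of infinite order, not only the generic ones; the \'etale algebra bijection $(E_2,\sigma_2)\mapsto(E_2\times K,\sigma_2\times\id)$ and the local-global embedding results apply to all maximal $K$-tori, not just the generic ones, so this works, but it must be said.
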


(We recall that according to Theorem \ref{T:WC3}, if Zariski-dense
$(\sG_1, K_1, S_1)$- and $(\sG_2, K_2, S_2)$-arithmetic  subgroups
are weakly commensurable then necessarily $K_1 = K_2$ and $S_1 =
S_2$, so Theorem \ref{T:BC1} in fact treats the most general
situation.)

\vskip2mm

The necessity is proved using generic tori (cf.\,\S \ref{S:Gen}) in
conjunction with the analysis of maximal tori in real groups of
types $\textsf{B}_{n}$ and $\textsf{C}_{n}$ (which can also be found
in \cite{DjTh}). The proof of sufficiency is obtained using the
above description of maximal $K$-tori in terms of commutative \'etale $K$-subalgebras
with involution and the local-global results for the existence of an
embedding of commutative \'etale algebras with involution into simple algebras
with involution established in \cite{PR-invol}; an alternative
argument along the lines outlined in the beginning of this section
can be given using Galois cohomology of algebraic groups (cf.\,\cite[\S 9]{GarR}).

\vskip2mm

As we already mentioned in \S \ref{S:Tori}, the analysis of weak
commensurability involved in the proof of Theorem \ref{T:BC1} leads
to, and at the same time depends on, the following result describing
when groups of types $\textsf{B}_{n}$ and $\textsf{C}_{n}$ have the
same isogeny/isomorphism classes of maximal $K$-tori.

\begin{thm}\label{T:BC2}
{\rm (\cite[Theorem 1.4]{GarR})} Let $G_1$ and $G_2$ be absolutely
almost simple algebraic groups over a number field $K$ of types
$\textsf{B}_{n}$ and $\textsf{C}_{n}$ respectively for some
$n \geqslant 3$.

\vskip2mm

\noindent {\rm (1)} \parbox[t]{13.5cm}{The groups $G_1$ and $G_2$
have the same \emph{isogeny} classes of maximal $K$-tori if and only
if they are twins.}

\vskip1mm

\noindent {\rm (2)} \parbox[t]{13.5cm}{The groups $G_1$ and $G_2$
have the same \emph{isomorphism} classes of maximal $K$-tori
 if and only if they are
twins, $G_1$ is adjoint, and $G_2$ is simply connected.}
\end{thm}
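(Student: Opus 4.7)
The argument would combine two main tools: the description of maximal $K$-tori of $\sG_i$ via commutative étale $K$-subalgebras of the underlying matrix algebra with involution (together with Proposition \ref{P:Stein}), and the generic tori machinery of \S \ref{S:Gen}. For the \emph{if} direction of (2) --- from which the \emph{if} of (1) also follows --- the plan is to assume $\sG_1$ adjoint of type $\textsf{B}_n$ and $\sG_2$ simply connected of type $\textsf{C}_n$ are twins. The natural bijection $(E_2, \sigma_2) \leftrightarrow (E_2 \times K, \sigma_2 \times \mathrm{id}_K)$ between isomorphism classes of $(2n)$- and $(2n+1)$-dimensional étale $K$-algebras with involution satisfying (\ref{E:dim}) already pairs $\mathrm{SU}(E_1, \sigma_1) = \mathrm{SU}(E_2, \sigma_2)$ as $K$-tori whenever both sides embed into the respective algebras with involution. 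When $\sG_1$ and $\sG_2$ are everywhere simultaneously split or simultaneously anisotropic, the local obstructions to realizing $(E_i, \sigma_i) \hookrightarrow (A_i, \tau_i)$ vanish in tandem at every place, and the local-global principle for embeddings of étale algebras with involution into simple algebras with involution established in \cite{PR-invol} upgrades this to a global bijection, giving that $\sG_1$ and $\sG_2$ have the same isomorphism classes of maximal $K$-tori.

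For the \emph{only if} direction of (1), I would start from the observation that the $K_v$-rank of a $K$-torus is an isogeny invariant, so matching $K$-isogeny classes of maximal $K$-tori implies that the sets of realized $K_v$-ranks of maximal $K$-tori of $\sG_1$ and $\sG_2$ coincide at every place $v$. At nonarchimedean $v$ both groups are automatically split (by the footnote in \S \ref{S:BC}), so it suffices to treat archimedean $v$. Using the generic tori machinery of \S \ref{S:Gen} and \cite{PR-Reg} to construct $K$-rational maximal tori with prescribed $K_v$-splitting type, together with the classification of real forms of $\textsf{B}_n$ and $\textsf{C}_n$ --- in particular the fact that for $n \geq 3$ the real forms of type $\textsf{C}_n$ have $\mathbb{R}$-rank in $\{0\} \cup \{1, \ldots, \lfloor n/2 \rfloor\} \cup \{n\}$, while those of type $\textsf{B}_n$ cover all ranks $0, 1, \ldots, n$ --- a case analysis combined with the restrictions that the $(E_i, \sigma_i)$-description imposes on tori realizable inside a non-split form forces the common $K_v$-rank to be either $0$ (both anisotropic) or $n$ (both split). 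Hence $\sG_1$ and $\sG_2$ are twins.

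For the \emph{only if} direction of (2), part (1) already gives that $\sG_1$ and $\sG_2$ are twins. I would then choose a generic maximal $K$-torus $T_1 \subset G_1$; its splitting field is Galois with group the common Weyl group $W = W(\textsf{B}_n) = W(\textsf{C}_n) = (\mathbb{Z}/2)^n \rtimes S_n$, and $X(T_1)$ is the character lattice of $G_1$ with its natural signed-permutation $W$-action. The hypothesis yields a maximal $K$-torus $T_2 \subset G_2$ with $T_1 \simeq T_2$ over $K$, necessarily also generic, and this produces a $W$-module isomorphism between the character lattices of $G_1$ and $G_2$. Among the four candidates $Q(\textsf{B}_n) \simeq \mathbb{Z}^n$, $P(\textsf{B}_n)$, $Q(\textsf{C}_n) = D_n$, $P(\textsf{C}_n) \simeq \mathbb{Z}^n$, only the pair $Q(\textsf{B}_n) \simeq P(\textsf{C}_n)$ is $W$-isomorphic: $P(\textsf{B}_n)$ is singled out by carrying the spin-weights $W$-orbit of size $2^n$ absent from the other lattices, while $D_n$ is not $W$-isomorphic to $\mathbb{Z}^n$ because its only $W$-orbit of size $2n$ is $\{\pm 2e_i\}$, which spans a proper sublattice of $D_n$. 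This forces $G_1$ to be adjoint and $G_2$ to be simply connected. The principal obstacles in the plan are the local-global step of the \emph{if} direction --- which relies crucially on the embedding theorems of \cite{PR-invol} --- and, in the \emph{only if} direction, the careful $W$-module comparison of character lattices together with the production of generic $K$-tori with prescribed local splitting behavior.
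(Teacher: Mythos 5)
Your overall strategy aligns with the approach the paper attributes to \cite{GarR}: the ``if'' directions via the étale-algebra-with-involution description and the local-global embedding theorems of \cite{PR-invol}, and the ``only if'' directions via generic tori together with a local analysis of the possible tori in the various forms. The $W$-module comparison of the four character lattices $Q(\textsf{B}_n)$, $P(\textsf{B}_n)$, $Q(\textsf{C}_n)\cong D_n$, $P(\textsf{C}_n)\cong\Z^n$ in your ``only if'' of (2) is also a clean and correct way to isolate the adjoint/simply-connected pair. Note that the paper itself does not prove this theorem but merely cites \cite[Theorems 1.4 and 1.5]{GarR}, so the comparison is with the indicated strategy rather than a written-out argument.

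There is, however, a genuine error in your ``only if'' direction of (1). You assert that ``at nonarchimedean $v$ both groups are automatically split (by the footnote in \S \ref{S:BC}).'' That footnote says only that groups of types $\textsf{B}_n$, $\textsf{C}_n$ $(n\geqslant 2)$ cannot be \emph{anisotropic} over a $p$-adic field, and concludes that the twin condition \emph{forces} both to be $K_v$-split at nonarchimedean $v$. It does not say such groups are automatically split, and indeed they are not: over a $p$-adic field $K_v$, a group $\mathrm{SO}(q)$ of type $\textsf{B}_n$ with a $3$-dimensional anisotropic kernel has $K_v$-rank $n-1$, and the inner form $\mathrm{SU}_n(D,h)$ of type $\textsf{C}_n$ attached to the quaternion division algebra $D$ over $K_v$ has $K_v$-rank $\lfloor n/2\rfloor$. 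In the ``only if'' direction the twin property is what you are trying to prove, so you must actually establish splitness at finite places rather than read it off the footnote. Fortunately the very rank comparison you invoke at archimedean places works here too and yields the needed conclusion: since $n-1 \neq \lfloor n/2\rfloor$ for $n\geqslant 3$, matching the maximal $K_v$-rank realized by maximal $K$-tori (which equals $\mathrm{rk}_{K_v}\sG_i$, by weak approximation for the variety of maximal tori, Corollary \ref{C:Gen1}) forces both groups to be $K_v$-split at every nonarchimedean $v$. Separately, at archimedean $v$, matching only the maximal rank does not suffice --- $\R$-ranks in $\{1,\ldots,\lfloor n/2\rfloor\}$ occur for both $\mathrm{SO}(p,q)$ and $\mathrm{Sp}(p',q')$ --- so the ``case analysis'' you allude to, comparing the full sets of $G(\R)$-conjugacy classes of maximal $\R$-tori (i.e., the triples recording split, compact and complex factors, as in \cite{DjTh}), is where the real work lies; your sketch does not supply it, and the plan as written would not close without it.
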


Theorem \ref{T:BC1} has the following interesting geometric
applications \cite{PR-Fields}. Again, let $G_1$  and $G_2$ be simple
real algebraic groups of types $\textsf{B}_{n}$ and $\textsf{C}_{n}$
respectively. For $i = 1, 2$, let $\Gamma_i$ be a discrete
torsion-free $(\sG_i , K)$-arithmetic subgroup of $\cG_i = G_i(\R)$,
and let $\fX_{\Gamma_i}$ be the corresponding locally symmetric
space of $\cG_i$. Then if $\sG_1$ and $\sG_2$ are not twins, the
locally symmetric spaces $\fX_{\Gamma_1}$ and $\fX_{\Gamma_2}$ are
{\it not} length-commensurable. As one application of this result,
we would like to point out the following assertion: {\it Let $M_1$
be an arithmetic quotient of the real hyperbolic space
$\mathbb{H}^p$ $(p \geqslant 5)$ and $M_2$ be an arithmetic quotient
of the quaternionic hyperbolic space $\mathbb{H}^q_{\mathbf{H}}$ $(q
\geqslant 2)$. Then $M_1$ and $M_2$ are not length-commensurable.}
The results of \cite{GarR} are used to handle the case $p = 2n$ and
$q = n - 1$ for some $n \geqslant 3$; for other values of $p$ and
$q$, the claim follows from Theorem \ref{T:WC8}.

On the other hand, suppose $G_1 = \mathrm{SO}(n + 1 , n)$ and $G_2 =
\mathrm{Sp}_{2n}$ over $\R$ (i.e., $G_1$ and $G_2$ are the
$\R$-split forms of types $\textsf{B}_{n}$ and $\textsf{C}_{n}$,
respectively) for some $n \geqslant 3$. Furthermore, let $\Gamma_i$
be a discrete torsion-free $(\sG_i , K)$-arithmetic subgroup of
$\cG_i$ for $i = 1, 2$, and let $M_i = \fX_{\Gamma_i}$. If $\sG_1$
and $\sG_2$ are twins then
$$
\Q \cdot L(M_2) = \lambda \cdot \Q \cdot L(M_1) \ \ \text{where} \ \
\lambda = \sqrt{\frac{2n + 2}{2n - 1}}.
$$
Thus, despite the fact that they are associated with groups of
different types, the locally symmetric spaces $M_1$ and $M_2$ can be
made length-commensurable by scaling the metric on one of them;
this, however, will {\it never} make them isospectral \cite{SKY}.
What is interesting is that so far this is the {\it only}  situation
in our analysis commensurability of isospectral and
length-commensurable locally symmetric spaces where isospectrality
manifests itself as an essentially stronger condition.

\vskip4mm

\section{On the fields generated by the lengths of closed
geodesics}\label{S:Fields}

\vskip4mm

In \S \ref{S:WC2} (and also  at the end of \S \ref{S:BC}) we
discussed the consequences of length-commensu- rability of two
locally symmetric spaces $M_1$ and $M_2$; our focus in this section
will be on the consequences of {\it non-length-commensurability} of
$M_1$ and $M_2$. More precisely, we will explore how different in
this case  the sets $L(M_1)$ and $L(M_2)$ (or $\Q \cdot L(M_1)$ and
$\Q \cdot L(M_2)$) are and whether they can  in fact be related in
{\it any} reasonable way? Of course, one can ask a number of
specific questions that fit this general perspective: for example,
can $L(M_1)$ and $L(M_2)$ differ only in a finite number of
elements, in other words, can the symmetric difference $L(M_1)
\bigtriangleup L(M_2)$ be finite? Or can it happen that
$\overline{\Q} \cdot L(M_1) = \overline{\Q} \cdot L(M_2)$, where
$\overline{\Q}$ is the field of all algebraic numbers; in other
words, can the use of the field $\overline{\Q}$  in place of $\Q$ in
the definition of length-commensurability essentially change this
relation? One relation between $L(M_1)$ and $L(M_2)$ that would make
a lot of sense geometrically is that of similarity, requiring that
there be a real number $\alpha > 0$ such that
$$
L(M_2) = \alpha \cdot L(M_1) \ \ \ (\text{or} \ \ \Q \cdot L(M_2) =
\alpha \cdot \Q \cdot L(M_1) \ ),
$$
which means that $M_1$ and $M_2$ can be made iso-length-spectral
(resp., length-commensu- rable) by scaling the metric on one of
them. From the algebraic standpoint, one can generalize this
relation by considering arbitrary polynomial relations between
$L(M_1)$ and $L(M_2)$ instead of just linear relations although this
perhaps does not have a clear geometric interpretation. To formalize
this general idea, we need to introduce some additional notations
and definitions.

\vskip2mm

For a Riemannian manifold $M$, we let $\mathscr{F}(M)$ denote the
subfield of $\R$ generated by the set $L(M)$. Given two Riemannian
manifolds $M_1$ and $M_2$, we set $\mathscr{F}_i = \mathscr{F}(M_i)$
for $i \in \{1 , 2\}$ and consider the following condition

\vskip2mm

\noindent $(T_i)$ \  {\it the compositum
$\mathscr{F}_1\mathscr{F}_2$ has infinite transcendence degree over
the field $\mathscr{F}_{3-i}$.}

\vskip2mm

\noindent Informally, this condition means that $L(M_i)$ contains
``many" elements which are algebraically independent from all the
elements of $L(M_{3-i})$, implying the nonexistence of any
nontrivial polynomial dependence between $L(M_1)$ and $L(M_2)$. In
particular, $(T_i)$ implies the following condition

\vskip2mm

\noindent \ $(N_i)$ \ \,$L(M_i) \not\subset A \cdot \Q \cdot L(M_{3-i})$
{\it for any finite set $A$ of real numbers.}

\vskip2mm

In \cite{PR-Fields}, we have proved a series of results asserting
that if $M_i = \fX_{\Gamma_i}$ for $i = 1, 2$, are the quotients of
symmetric spaces $\fX_i$ associated with absolutely simple real
algebraic groups $G_i$ by Zariski-dense discrete torsion-free
subgroups $\Gamma_i \subset G_i(\R)$, then in many situations the
fact that $M_1$ and $M_2$ are not length-commensurable implies that
conditions $(T_i)$ and $(N_i)$ hold for at least one $i \in \{1 ,
2\}$. To give precise formulations, in addition to the standard
notations used earlier, we let $w_i$ denote the order of the
(absolute) Weyl group of $G_i$. We also need to emphasize that {\it
all geometric results in} \cite{PR-Fields} {\it assume the validity
of Shanuel's conjecture.} This assumption, however, enables one to
establish results that are somewhat stronger than the corresponding
results in \S \ref{S:WC2} and do not require that
$\Gamma_1$ and $\Gamma_2$ have property $(A)$ (2.2). We begin with
the following result which strengthens Theorem \ref{T:WC8}.

\begin{thm}\label{T:Fields1}
Assume that the Zariski-dense subgroups $\Gamma_1$ and $\Gamma_2$
are finitely generated (which is automatically the case if these
subgroups are lattices).

\vskip1mm

\noindent {\rm (1)} If $w_1 > w_2$ then $(T_1)$ holds;

\vskip.3mm

\noindent {\rm (2)} If $w_1 = w_2$ but $K_{\Gamma_1} \not\subset
K_{\Gamma_2}$ then again $(T_1)$ holds.

\vskip1mm

\noindent Thus, unless $w_1 = w_2$ and $K_{\Gamma_1} =
K_{\Gamma_2}$, the condition $(T_i)$ holds for at least one $i \in
\{1 , 2\}$; in particular, $M_1$ and $M_2$ are not
length-commensurable.
\end{thm}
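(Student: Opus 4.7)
The plan is to produce an infinite family of semi-simple elements $\gamma_1^{(n)}\in\Gamma_1$ whose $\lambda$-invariants are algebraically independent over $\mathscr{F}_2$, using the scheme of Proposition~\ref{P:LC-WC} iterated and adapted to the absence of property~$(A)$. Under either hypothesis of the theorem, Theorems~\ref{T:WC1} and~\ref{T:WC2} rule out weak commensurability between $\Gamma_1$ and $\Gamma_2$: in case~(1), weak commensurability would force the Killing--Cartan types of $G_1$ and $G_2$ to be either the same or a $\textsf{B}_n$--$\textsf{C}_n$ pair, both giving $w_1=w_2$, which contradicts $w_1>w_2$; in case~(2), it would force $K_{\Gamma_1}=K_{\Gamma_2}$, contradicting $K_{\Gamma_1}\not\subset K_{\Gamma_2}$.

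Invoking the theory of generic elements in finitely generated Zariski-dense subgroups (cf.~\S\ref{S:Gen}), one obtains an infinite sequence $\{\gamma_1^{(n)}\}_{n\geq 1}$ of semi-simple elements of infinite order in $\Gamma_1$, each lying in a distinct generic maximal $\R$-torus $T_1^{(n)}$ of $G_1$, so that $\Ga(\overline{K_{\Gamma_1}}/K_{\Gamma_1})$ acts on $X(T_1^{(n)})$ via the full absolute Weyl group $W_1$. For each $n$, pick a positive character $\chi^{(n)}\in P(T_1^{(n)})$ and set $a_n:=\log\chi^{(n)}(\gamma_1^{(n)})$. Let $V_2\subset\R$ denote the $\Q$-span of all $\log\eta(\delta)$ for $\delta$ a semi-simple element of $\Gamma_2$ and $\eta$ a positive character of a maximal $\R$-torus of $G_2$ containing $\delta$; by (\ref{E:positive}) the field $\mathscr{F}_2$ is algebraic over $\Q(V_2\cup\{e^v:v\in V_2\})$. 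The central claim, a multi-torus refinement of Lemma~\ref{L:1}, is that $\{a_n\}_{n\geq 1}$ is $\Q$-linearly independent modulo $V_2$. Indeed, a nontrivial relation $\sum_k s_k a_{n_k}=\sum_j t_j\log\eta_j(\delta_j)$ exponentiates to an identity $\psi(\gamma)=\psi'(\delta)\neq 1$ for integer combinations $\psi$ and $\psi'$ of positive characters on $\prod_k T_1^{(n_k)}$ and $\prod_j S_j$, respectively. In case~(1), Galois-conjugating this identity over $K_{\Gamma_1}K_{\Gamma_2}$ and tracking orbit sizes --- genericity of the $T_1^{(n_k)}$'s produces a $W_1$-orbit contribution on the left whose minimal field of definition reflects $w_1$, while the right-hand side is governed by $w_2$ --- yields a numerical contradiction from $w_1>w_2$. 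In case~(2), the same Galois-conjugation argument, rearranged to track the minimal field of definition of $\psi$, instead forces $K_{\Gamma_1}\subset K_{\Gamma_2}$.

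Once the $\Q$-linear independence is secured, Schanuel's conjecture applied to any finite $\Q$-linearly independent subset $\{a_{n_1},\ldots,a_{n_r}\}\cup B$ (with $B\subset V_2$ chosen large enough to contain the finitely many logs needed to generate the $\mathscr{F}_2$-coefficients of any hypothetical relation $P(\lambda_{\Gamma_1}(\gamma_1^{(n_1)}),\ldots,\lambda_{\Gamma_1}(\gamma_1^{(n_r)}))=0$) forces, jointly with the corresponding character values, a transcendence degree over $\Q$ of at least $r+|B|$. A direct accounting --- using that by (\ref{E:lambda4}) each $\lambda_{\Gamma_1}(\gamma_1^{(n_k)})$ is algebraic over the relevant $\Q(a_{n_k},\ldots)$ while $\mathscr{F}_2$-coefficients of $P$ are algebraic over $\Q(B\cup e^B)$, which has transcendence degree at most $2|B|$ --- contradicts the existence of such $P$, giving algebraic independence of the $\lambda_{\Gamma_1}(\gamma_1^{(n_k)})$ over $\mathscr{F}_2$ and hence $(T_1)$. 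The main obstacle is the multi-torus Galois step: Lemma~\ref{L:1} in its published form treats a single torus on each side, and upgrading it to the compound multi-torus setting requires a subtle selection of the generic tori $T_1^{(n)}$ ensuring that their splitting fields are ``sufficiently disjoint'' over $K_{\Gamma_1}K_{\Gamma_2}$ for the Galois-orbit comparison to succeed.
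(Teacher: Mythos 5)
Your outline correctly identifies the right scaffolding — generic elements in $\Gamma_1$, comparison of Galois/Weyl orbit sizes, and an application of Schanuel to convert linear independence of logarithms into algebraic independence of the $\lambda$-invariants — and this does parallel the paper's reduction of Theorem~\ref{T:Fields1} to condition $(C_1)$ together with the implication $(C_i)\Rightarrow(T_i)$ obtained from Schanuel (cf.\ the discussion surrounding Definition~8.8 and Theorem~\ref{T:Fields6}). However, the argument as written has a genuine gap at precisely the place you flag as ``the main obstacle.'' The step you call a ``multi-torus refinement of Lemma~\ref{L:1}'' is not a refinement of that lemma at all: Lemma~\ref{L:1} is elementary and only uses the \emph{failure of weak commensurability} of a single pair $(\gamma_1,\gamma_2)$, whereas what you need is the much stronger assertion that for every $m$ there exist $m$ multiplicatively independent elements of $\Gamma_1$ that are collectively \emph{not weakly contained} in $\Gamma_2$ — this is exactly condition $(C_1)$, i.e.\ the algebraic Theorem~\ref{T:Fields6}, and it is where all the substance of the proof lives. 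Your sketch of how to prove it (``Galois-conjugating this identity over $K_{\Gamma_1}K_{\Gamma_2}$ and tracking orbit sizes'') gestures at the correct mechanism but does not establish it; in particular, a relation $\chi^{(1)}_1(\gamma^{(1)}_1)\cdots = \chi^{(2)}_1(\gamma^{(2)}_1)\cdots$ as in (\ref{E:WCont}) takes place in the compositum of the splitting fields of \emph{all} tori involved on both sides, and extracting a clean contradiction from $w_1>w_2$ requires controlling that compositum — the ``sufficiently disjoint splitting fields'' assertion you defer is itself nontrivial and is part of what Theorem~\ref{T:Gen3} and the Isogeny Theorem machinery are engineered to deliver.

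Two further points. First, the opening deduction — that Theorems~\ref{T:WC1} and~\ref{T:WC2} rule out weak commensurability of $\Gamma_1$ and $\Gamma_2$ under either hypothesis — is correct but beside the point: failure of weak commensurability of the groups only produces \emph{one} bad element, which cannot yield the ``infinite transcendence degree'' statement $(T_1)$; you need, for each $m$, an $m$-tuple with the stronger weak-non-containment property, so this step cannot substitute for Theorem~\ref{T:Fields6}. Second, the final Schanuel accounting is not quite right when property $(A)$ fails (which you correctly note must be allowed here): the character values $\chi^{(n)}(\gamma^{(n)}_1)$ need not be algebraic, so one cannot simply apply the ``logs of algebraic numbers are algebraically independent'' consequence; one must use the full form of Schanuel and bound transcendence degrees on both sides carefully, which changes the bookkeeping you sketch (``transcendence degree at most $2|B|$'') in a way you do not verify. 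As it stands, the proposal is a correct strategic outline with the central lemma asserted rather than proved.
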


As follows from Theorem \ref{T:Fields1}, we only need to consider
the case where $w_1 = w_2$, which we will assume -- recall that this
entails that either $G_1$ and $G_2$ are of the same Killing-Cartan
type, or one of them is of type $\textsf{B}_{n}$  and the other is
of type $\textsf{C}_{n}$  $(n \geqslant 3)$. Then it is convenient
to divide our results for {\it arithmetic} subgroups $\Gamma_1$ and
$\Gamma_2$ into three theorems: the first one will treat the case
where $G_1$ and $G_2$ are of the {\it same type} which is different
from $\textsf{A}_{n}$, $\textsf{D}_{2n + 1}$ $(n
> 1)$ and $\textsf{E}_6$, the second one -- the case where both $G_1$ and
$G_2$ are one of the  types $\textsf{A}_{n}$, $\textsf{D}_{2n + 1}$ $(n
> 1)$ and $\textsf{E}_6$, and the third one
-- the case where $G_1$ is of type $\textsf{B}_{n}$ and $G_2$ is
of type $\textsf{C}_{n}$ for some $n \geqslant 3$.

\begin{thm}\label{T:Fields2}
With notations as above, assume that $G_1$ and $G_2$ are of the same
Killing-Cartan type which is different from $\textsf{A}_{n}$,
$\textsf{D}_{2n + 1}$ $(n > 1)$ and $\textsf{E}_6$ and that the
subgroups $\Gamma_1$ and $\Gamma_2$ are arithmetic. Then either $M_1
= \fX_{\Gamma_1}$ and $M_2 = \fX_{\Gamma_2}$ are commensurable
(hence also length-commensurable), or conditions $(T_i)$ and $(N_i)$
hold for at least one $i \in \{1 , 2\}$.
\end{thm}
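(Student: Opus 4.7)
My plan is to split the argument on whether $\Gamma_1$ and $\Gamma_2$ are weakly commensurable in the sense of Definition~2.2(b). If they are, then Theorem~\ref{T:WC3} yields $K_1=K_2=:K$ and $S_1=S_2=:S$; since the common Killing--Cartan type is outside the list $\textsf{A}_{n}$, $\textsf{D}_{2n+1}$, $\textsf{E}_6$, Theorem~\ref{T:WC4} provides a $K$-isomorphism $\mathscr{G}_1\simeq\mathscr{G}_2$, and Proposition~\ref{P:WC1} then shows that $\Gamma_1$ and $\Gamma_2$ are commensurable up to an $\R$-isomorphism $\sigma\colon\overline{G}_1\to\overline{G}_2$. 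Since $\fX_i=\mathcal{K}_i\backslash\mathcal{G}_i$ depends only on the adjoint group and $\sigma$ induces a Riemannian isometry (up to normalization of the Killing-form metric), the locally symmetric spaces $M_1$ and $M_2$ are commensurable, which is the first alternative in the conclusion.

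In the remaining case, $\Gamma_1$ and $\Gamma_2$ are not weakly commensurable. After reindexing if necessary, fix a semisimple element $\gamma_1^{(0)}\in\Gamma_1$ of infinite order that is not weakly commensurable to any semisimple element of $\Gamma_2$. The goal is to establish $(T_1)$, which immediately yields $(N_1)$. My strategy is to construct an infinite sequence $\gamma_1^{(1)},\gamma_1^{(2)},\ldots\in\Gamma_1$ of semisimple elements of infinite order, none weakly commensurable to any element of $\Gamma_2$, whose values $\lambda_{\Gamma_1}(\gamma_1^{(k)})$ are algebraically independent over $\mathscr{F}_2$. For this I would invoke the existence of generic elements in finitely generated Zariski-dense subgroups, established in \cite{PR-Reg} and discussed in \S\ref{S:Gen}: $\Gamma_1$ contains an abundant supply of generic semisimple elements, each lying in its own maximal $\R$-torus and providing, via property~(A) (automatic here because $\Gamma_1$ is arithmetic), fresh logarithms of nonzero algebraic numbers. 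The elements $\gamma_1^{(k)}$ are then picked inductively so that at each stage $\gamma_1^{(k)}$ is not weakly commensurable to any element of $\Gamma_2$ and its new positive-character logarithms extend, linearly independently over $\Q$, the finite family of logarithms already collected from $\gamma_1^{(1)},\ldots,\gamma_1^{(k-1)}$ and from finitely many relevant elements of $\Gamma_2$.

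With such a sequence in hand, Schanuel's conjecture upgrades $\Q$-linear independence of algebraic-number logarithms to algebraic independence over $\Q$. The formula (\ref{E:positive}) presents each $\lambda_{\Gamma_1}(\gamma_1^{(k)})^2$ and each $\lambda_{\Gamma_2}(\gamma_2)^2$ as a nonzero homogeneous rational-coefficient quadratic polynomial in \emph{disjoint} subsets of those logarithms, so the $\lambda_{\Gamma_1}(\gamma_1^{(k)})$ are algebraically independent over the field $\mathscr{F}_2$, which establishes $(T_1)$; the implication $(T_1)\Rightarrow(N_1)$ is immediate from the definitions.

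I expect the main obstacle to be the inductive selection of the $\gamma_1^{(k)}$: at each stage one must simultaneously defeat the weak-commensurability obstruction for every semisimple element of the infinite set $\Gamma_2$ while preserving a growing linear-independence condition on logarithms. The generic-element technology of \cite{PR-Reg}, together with an iterated application of Lemma~\ref{L:1} to reduce non-weak-commensurability to linear independence of character logarithms, is what should make this construction go through. Arithmeticity enters twice: to guarantee property~(A) in the non-weakly-commensurable branch, and in the weakly commensurable branch to bring Theorems~\ref{T:WC3} and~\ref{T:WC4} to bear on the $K$-forms $\mathscr{G}_i$.
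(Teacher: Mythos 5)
Your treatment of the first branch is correct and matches the paper's framework: if $\Gamma_1$ and $\Gamma_2$ are weakly commensurable, then Theorem \ref{T:WC3} gives $K_1=K_2$ and $S_1=S_2$, Theorem \ref{T:WC4} (which is where the exclusion of types $\textsf{A}_n$, $\textsf{D}_{2n+1}$, $\textsf{E}_6$ enters) gives $\mathscr{G}_1\simeq\mathscr{G}_2$ over $K$, and Proposition \ref{P:WC1} yields commensurability of the arithmetic subgroups up to an $\R$-isomorphism of adjoint groups, hence commensurability of $M_1$ and $M_2$.

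The second branch, however, has a genuine gap, and it is precisely where the main difficulty of the theorem lies. To establish $(T_1)$ you need infinitely many $\lambda_{\Gamma_1}(\gamma_1^{(k)})$ that are algebraically independent over the field $\mathscr{F}_2$ generated by the \emph{entire} weak length spectrum of $M_2$. Via Schanuel's conjecture this requires the logarithms of character values of the $\gamma_1^{(k)}$ to be $\Q$-linearly independent modulo the $\Q$-span of the logarithms of character values of \emph{all} semi-simple elements of $\Gamma_2$ simultaneously. Pairwise non-weak-commensurability, which is the only condition you are tracking, controls only relations of the form $\chi_1(\gamma_1)=\chi_2(\gamma_2)$ between \emph{single} elements; it does not rule out product relations such as $\chi^{(1)}_1(\gamma^{(1)}_1)\chi^{(1)}_2(\gamma^{(1)}_2)=\chi^{(2)}_1(\gamma^{(2)}_1)\chi^{(2)}_2(\gamma^{(2)}_2)$ that involve several elements on each side, yet such a relation would destroy the required independence. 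This is exactly why the paper introduces the notion of \emph{weak containment} (Definition 8.8(a)) and property $(C_i)$ (Definition 8.8(c)): $(C_1)$ demands, for every $m$, $m$ multiplicatively independent elements of $\Gamma_1$ whose \emph{tuple} is not weakly contained in $\Gamma_2$ — a condition strictly stronger than each element individually failing to be weakly commensurable to any element of $\Gamma_2$ — and the implication $(C_i)\Rightarrow(T_i),(N_i)$ is a genuine theorem ([PR-Fields, Corollary 7.3]) proved using Schanuel's conjecture, not something you can re-derive by iterating Lemma \ref{L:1}.

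Consequently, your inductive-selection sketch ("extend, linearly independently over $\Q$, ... from finitely many relevant elements of $\Gamma_2$") does not address how a fixed finite system of congruences on $\Gamma_1$ (which is all that generic-element technology hands you) will defeat the infinitely many potential product relations with elements ranging over all of $\Gamma_2$. The actual proof of $(C_i)$ in the non-commensurable case uses the Isogeny Theorem \ref{T:Gen4}, the local-global analysis of maximal tori, and the arithmetic structure of $\Gamma_2$ in an essential way, not merely non-weak-commensurability. I would recommend reformulating your second branch as: show that non-commensurability (equivalently, via the first branch, non-weak-commensurability) of the arithmetic subgroups implies $(C_i)$ for some $i$, then cite the $(C_i)\Rightarrow(T_i),(N_i)$ transference result.
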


(This theorem strengthens part of Theorem \ref{T:WC9}. We also note
that $(T_i)$ and $(N_i)$ may not hold for {\it both} $i = 1$ and
$2$; in fact, it is possible that one of $L(M_1)$ and $L(M_2)$ is contained in the other.)

\vskip2mm

\begin{thm}\label{T:Fields3}
Again, keep the above notations and assume that the common
Killing-Cartan type of $G_1$ and $G_2$ is one of the following:
$\textsf{A}_{n}$, $\textsf{D}_{2n + 1}$ $(n > 1)$ or
$\textsf{E}_6$ and that the subgroups $\Gamma_1$ and $\Gamma_2$ are
arithmetic. Assume in addition that $K_{\Gamma_i} \neq \Q$ for at
least one $i \in \{1 , 2\}$. Then either $M_1$ and $M_2$ are
length-commensurable (although not necessarily commensurable), or
conditions $(T_i)$ and $(N_i)$ hold for at least one $i \in \{1 ,
2\}$.
\end{thm}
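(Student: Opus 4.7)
The plan is to argue by contrapositive: assuming $M_1$ and $M_2$ are not length-commensurable, I will construct (by symmetry, up to reindexing) an infinite sequence of semi-simple elements $\gamma_1^{(k)} \in \Gamma_1$ whose lengths $\lambda_{\Gamma_1}(\gamma_1^{(k)})$ are algebraically independent over $\mathscr{F}_2$, which gives $(T_1)$ and hence also $(N_1)$. By Theorem \ref{T:Fields1} I may restrict to the case $w_1 = w_2$ and $K_{\Gamma_1} = K_{\Gamma_2} =: K$, where $K \neq \Q$ by hypothesis. Recall from (\ref{E:positive}) that each $\lambda_{\Gamma_i}(\gamma)^2$ is a nonzero homogeneous degree-two polynomial over $\Q$ in the logarithms $\log \chi(\gamma)$ of positive character values; since $\Gamma_1$ and $\Gamma_2$ are arithmetic they possess property $(A)$, so these values lie in $\overline{\Q}^{\times}$. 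Schanuel's conjecture then asserts that any $\Q$-linearly independent collection of such logs is algebraically independent over $\Q$, so it suffices to produce an unbounded $\Q$-linearly independent family of logs coming from $\Gamma_1$ that, together with those arising from $\Gamma_2$, remains $\Q$-linearly independent.

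The core construction is inductive, and rests on the existence of generic elements in finitely generated Zariski-dense subgroups (\cite{PR-Reg}; see \S\ref{S:Gen}). Suppose $\gamma_1^{(1)}, \dots, \gamma_1^{(k)}$ have been selected and let $\sL_k \subset \R$ denote the (countable) $\Q$-subspace spanned by the logs $\log \chi(\gamma_1^{(j)})$ for $j \leqslant k$ and positive characters $\chi$, together with all $\log \chi'(\gamma_2)$ for semi-simple $\gamma_2 \in \Gamma_2$ and positive characters $\chi'$ of the relevant maximal $\R$-tori. Using the abundance of generic regular elements in $\Gamma_1$, one selects $\gamma_1^{(k+1)} \in \Gamma_1$ lying in a maximal $\R$-torus $T$ whose positive characters furnish logs that are $\Q$-linearly independent modulo $\sL_k$. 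If no such $\gamma_1^{(k+1)}$ existed, every generic element of $\Gamma_1$ would satisfy a nontrivial $\Q$-linear relation between its logs and those arising from $\Gamma_2$ and the previous choices, which --- via the reformulation behind Lemma \ref{L:1} --- forces a weak-commensurability tie between $\Gamma_1$ and $\Gamma_2$ so pervasive that, combined with Theorems \ref{T:WC3} and \ref{T:WC5}, it would yield the length-commensurability of $M_1$ and $M_2$, contradicting our assumption. Having obtained the sequence, Schanuel's conjecture together with (\ref{E:positive}) shows that each $\lambda_{\Gamma_1}(\gamma_1^{(k)})$ is algebraically independent over $\mathscr{F}_2(\lambda_{\Gamma_1}(\gamma_1^{(1)}), \dots, \lambda_{\Gamma_1}(\gamma_1^{(k-1)}))$, completing the argument.

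The principal obstacle is executing this strategy for the types $\textsf{A}_{n}$, $\textsf{D}_{2n+1}$ $(n > 1)$, and $\textsf{E}_6$, where (unlike in Theorem \ref{T:WC4}) weakly commensurable but non-commensurable $(K, S)$-arithmetic subgroups genuinely exist via the Galois-cohomological construction of \cite[\S 9]{PR-IHES}. In that setting many individual logs are entangled across $\Gamma_1$ and $\Gamma_2$, and the hypothesis $K \neq \Q$ is essential: it guarantees that for a generic maximal $K$-torus $T$ of $G_1$, the Galois module $X(T)$ --- reflecting the nontrivial action of $\Ga(\overline{\Q}/K) \subsetneq \Ga(\overline{\Q}/\Q)$ --- carries enough $\Q$-independent character directions to supply a fresh $\Q$-linearly independent log at each inductive stage, ruling out the degenerate $\Q$-rational behaviour exemplified by \cite{LSV}. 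I expect the technical heart to be a quantitative refinement of Lemma \ref{L:1} showing that, for such a generic $T$, the $\Q$-span of $\{\log \chi(\gamma) : \chi \in X(T) \text{ positive}\}$ always strictly exceeds the $\Q$-span of the countably many logs coming from $\Gamma_2$ and previously chosen $\gamma_1^{(j)}$, unless $M_1$ and $M_2$ are in fact length-commensurable.
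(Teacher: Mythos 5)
Your overall strategy is in the right spirit --- the paper does prove this theorem by establishing an algebraic condition (property $(C_i)$: for each $m$ there are $m$ multiplicatively independent elements of $\Gamma_i$ of infinite order not weakly contained in $\Gamma_{3-i}$), and then invoking a Schanuel-based argument showing $(C_i)\Rightarrow(T_i)\wedge(N_i)$. Your inductive construction of $\gamma_1^{(k)}$ with logs $\Q$-independent modulo $\sL_k$ is a reformulation of the verification of $(C_1)$, and the reduction to $w_1=w_2$, $K_{\Gamma_1}=K_{\Gamma_2}=K\neq\Q$ via Theorem \ref{T:Fields1} is correct.

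The genuine gap is in the dichotomy step: you need to show that if the inductive construction is obstructed for both $i=1,2$, then $M_1$ and $M_2$ \emph{are} length-commensurable, and this is not what your cited tools deliver. Lemma \ref{L:1} (in contrapositive) only yields that a $\Q$-linear dependence among logs forces a weak-commensurability relation between individual elements; and Theorems \ref{T:WC3}, \ref{T:WC5} run in the opposite direction --- they derive $K_1=K_2$, $S_1=S_2$ and finiteness of commensurability classes \emph{from} weak commensurability, not length-commensurability from anything. Weak commensurability is a necessary but not sufficient condition for length-commensurability, and the whole difficulty in rank $>1$ is precisely that a single relation $\chi_1(\gamma_1)^{n_1}=\chi_2(\gamma_2)^{n_2}$ does not, by itself, make $\lambda_{\Gamma_1}(\gamma_1)/\lambda_{\Gamma_2}(\gamma_2)$ rational, because $\lambda_\Gamma(\gamma)^2$ is a sum of squares of \emph{several} logs. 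To actually produce rationally matched lengths one has to transport the whole root system, which is what the Isogeny Theorem \ref{T:Gen4} does for generic elements: the scaled isogeny $\pi^*$ carries $\Phi(G_1,T_1)$ to $\Phi(G_2,T_2)$, and it is this that makes $\lambda_{\Gamma_1}(\gamma_1)$ and $\lambda_{\Gamma_2}(\gamma_2)$ rationally commensurate. Your sketch does not invoke this (nor the multidimensional version \cite[Theorem 2.3]{PR-Fields} using weak containment), and without it the claim ``so pervasive that \dots it would yield length-commensurability'' is not established. Likewise, the asserted role of $K\neq\Q$ (more independent character directions) is not precise enough to supply the missing step; what actually has to be shown is that when the construction fails for both $i$, the generic tori of $G_1$ and $G_2$ pair up and the lengths match across the pairing.
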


To illustrate possible applications of these theorems, we will now
give explicit statements for real hyperbolic manifolds -- cf.
Theorem \ref{T:Hyper}; similar results are available for complex and
quaternionic hyperbolic spaces.

\begin{cor}\label{C:Fields1}
Let $M_i$ $(i = 1, 2)$ be the quotients of the real hyperbolic space
$\mathbb{H}^{d_i}$ with $d_i \neq 3$ by a torsion-free Zariski-dense
discrete subgroup $\Gamma_i$ of $G_i(\R)$, where $G_i = \mathrm{PSO}(d_i , 1)$.

\vskip1mm

\ $(i)$ If $d_1 > d_2$ then conditions $(T_1)$ and $(N_1)$ hold.

\vskip.5mm

$(ii)$ If $d_1 = d_2$ but $K_{\Gamma_1} \not\subset K_{\Gamma_2}$
then again conditions $(T_1)$ and $(N_1)$ hold.

\vskip1mm

\noindent Thus, unless $d_1 = d_2$ and $K_{\Gamma_1} =
K_{\Gamma_2}$, conditions $(T_i)$ and $(N_i)$ hold for at least one
$i \in \{1 , 2\}$.

\vskip2mm

\noindent Assume now that $d_1 = d_2 =: d$ and the subgroups
$\Gamma_1$ and $\Gamma_2$ are arithmetic.

\vskip1mm

$(iii)$ \parbox[t]{13cm}{If $d$ is either even or is congruent to
$3(\mathrm{mod}\: 4)$, then either $M_1$ and $M_2$ are
commensurable, hence length-commensurable, or $(T_i)$ and $(N_i)$
hold for at least one $i \in \{1 , 2\}$.}

\vskip.5mm

$(iv)$ \parbox[t]{13cm}{If $d \equiv 1(\mathrm{mod}\: 4)$ and in
addition $K_{\Gamma_i} \neq \Q$ for at least one $i \in \{1 , 2\}$
then either $M_1$ and $M_2$ are length-commensurable (although not
necessarily commensurable), or conditions $(T_i)$ and $(N_i)$ hold
for at least one $i \in \{1 , 2\}$.}
\end{cor}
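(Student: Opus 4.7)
The plan is to translate the geometric hypotheses on the real hyperbolic manifolds $M_i = \mathbb{H}^{d_i}/\Gamma_i$ into algebraic data for the groups $G_i = \mathrm{PSO}(d_i, 1)$ and then invoke Theorems~\ref{T:Fields1}, \ref{T:Fields2}, and \ref{T:Fields3} directly. For $d_i$ even, say $d_i = 2m_i$, the group $G_i$ is of Killing--Cartan type $\textsf{B}_{m_i}$, with Weyl group order $w_i = 2^{m_i}\, m_i!$; for $d_i$ odd and $\neq 3$, say $d_i = 2m_i+1 \geq 5$, it is of type $\textsf{D}_{m_i+1}$, with $w_i = 2^{m_i}\,(m_i+1)!$. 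The standing hypothesis $d_i \neq 3$ is exactly what ensures $G_i$ is absolutely simple (since $\mathrm{PSO}(3,1)$ has type $\textsf{D}_2 = \textsf{A}_1 \times \textsf{A}_1$), so all three theorems apply.

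For part $(i)$, I would write down the sequence of values $w(d)$ for $d = 2,4,5,6,7,8,9,\ldots$, namely $2, 8, 24, 48, 192, 384, 1920, \ldots$, and observe that it is strictly monotonic by comparing consecutive ratios: $w(2m+1)/w(2m) = m+1 \geq 2$ and $w(2m+2)/w(2m+1) = 2$. Consequently $d_1 > d_2$ forces $w_1 > w_2$, and Theorem~\ref{T:Fields1}(1) yields $(T_1)$; the implication $(T_1) \Rightarrow (N_1)$ is then immediate from the definitions of these properties. Part $(ii)$ follows in the same spirit: since $d_1 = d_2$ gives $w_1 = w_2$, one applies Theorem~\ref{T:Fields1}(2) to $K_{\Gamma_1} \not\subset K_{\Gamma_2}$ to obtain $(T_1)$, and then $(N_1)$.

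For parts $(iii)$ and $(iv)$, the key bookkeeping is to identify which hyperbolic dimensions fall inside the ``bad'' types excluded from Theorem~\ref{T:Fields2}, i.e.\ $\textsf{A}_n$, $\textsf{D}_{2n+1}$ ($n>1$), $\textsf{E}_6$. Among our groups only the $\textsf{D}$--type case can be relevant: for $d$ odd $\geq 5$, the type is $\textsf{D}_{(d+1)/2}$, and the subscript $(d+1)/2$ is odd precisely when $d \equiv 1 \pmod 4$. Thus in $(iii)$, where $d$ is either even or $\equiv 3 \pmod 4$, the common type of $G_1$ and $G_2$ (automatically equal since $d_1 = d_2$) is $\textsf{B}_{d/2}$ or $\textsf{D}_{(d+1)/2}$ with $(d+1)/2$ even, so Theorem~\ref{T:Fields2} applies verbatim. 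In $(iv)$, the type is $\textsf{D}_{2n+1}$ and Theorem~\ref{T:Fields2} is no longer available; Theorem~\ref{T:Fields3} is used instead, and its standing assumption $K_{\Gamma_i} \neq \Q$ for some $i$ is precisely the extra hypothesis inserted in the statement of $(iv)$.

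There is no serious obstacle: the real content lies in the general theorems, and the corollary is a dictionary translating them for $\mathrm{PSO}(d,1)$. The only thing to double-check is the monotonicity of $w(d)$ across the two families $\textsf{B}$ and $\textsf{D}$ and the parity analysis of the subscript in $\textsf{D}_{(d+1)/2}$; both are routine and are what dictate the mod~$4$ trichotomy appearing in $(iii)$--$(iv)$.
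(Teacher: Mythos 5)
Your proof is correct and matches the approach the paper intends: the corollary is a dictionary translating Theorems~\ref{T:Fields1}, \ref{T:Fields2}, and \ref{T:Fields3} to $\mathrm{PSO}(d,1)$, and your type identification ($\textsf{B}_{d/2}$ for even $d$, $\textsf{D}_{(d+1)/2}$ for odd $d\ge 5$), the strict monotonicity of $w(d)$ across the two families, the parity analysis of $(d+1)/2$ driving the mod~$4$ trichotomy, and the observation that $(T_i)\Rightarrow(N_i)$ is built into the definitions, are all exactly as required. One tiny imprecision worth noting: for $d=5$ the absolute type is $\textsf{D}_3=\textsf{A}_3$, which is an excluded type for Theorem~\ref{T:Fields2} under the label $\textsf{A}_n$ rather than $\textsf{D}_{2n+1}$ ($n>1$); your parity test $(d+1)/2$ odd still routes $d=5$ into case~$(iv)$, so the logic is unaffected, but the sentence ``in $(iv)$ the type is $\textsf{D}_{2n+1}$'' should really say ``the type is one of the excluded types $\textsf{A}_n$ or $\textsf{D}_{2n+1}$.''
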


Now, we consider the case where one of the groups is of type
$\textsf{B}_{n}$ and the other of type $\textsf{C}_{n}$ $(n
\geqslant 3)$. The theorem below strengthens the results of \S
\ref{S:BC}.

\begin{thm}\label{T:Fields4}
Notations as above, assume that $G_1$ is of type $\textsf{B}_{n}$
and $G_2$ is of type $\textsf{C}_{n}$ for some $n \geqslant 3$
and the subgroups $\Gamma_1$ and $\Gamma_2$ are arithmetic. Then
either $(T_i)$ and $(N_i)$ hold for at least one $i \in \{1 , 2\}$,
or
$$
\Q \cdot L(M_2) = \lambda \cdot \Q \cdot L(M_1) \ \ \text{where} \ \
\lambda = \sqrt{\frac{2n + 2}{2n - 1}}.
$$
\end{thm}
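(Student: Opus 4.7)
The proof naturally splits along the dichotomy provided by Corollary \ref{C:LC-WC} and Theorem \ref{T:BC1}. My plan is to show: (a) if $\Gamma_1$ and $\Gamma_2$ are \emph{not} weakly commensurable in a sufficiently strong sense, then conditions $(T_i)$ and $(N_i)$ hold for at least one $i$; and (b) if they are weakly commensurable, then $\sG_1$ and $\sG_2$ are twins by Theorem \ref{T:BC1}, and the scaling relation $\Q \cdot L(M_2) = \lambda \cdot \Q \cdot L(M_1)$ falls out of an explicit root-system computation.

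For (a), I would follow the template already used in the proofs of Theorems \ref{T:Fields1}--\ref{T:Fields3}. Using the existence of generic elements in finitely generated Zariski-dense subgroups (\S\ref{S:Gen}), I would produce an infinite family $\{\gamma_1^{(k)}\}_{k \geq 1}$ of regular semi-simple elements of $\Gamma_1$, lying in pairwise non-conjugate and ``very generic'' maximal $K$-tori, such that no $\gamma_1^{(k)}$ is weakly commensurable with any semi-simple element of $\Gamma_2$; the construction of this family uses the arithmeticity of $\Gamma_2$ together with the finiteness statement \ref{T:WC5} (and the refined variants from \cite{PR-Fields}) to limit how many $K$-tori of $\sG_1$ can possibly be matched to a torus of $\sG_2$. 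Proposition \ref{P:LC-WC} (which uses Schanuel) then makes each $\lambda_{\Gamma_1}(\gamma_1^{(k)})$ algebraically independent from every element of $L(M_2)$, and an iterated application of Lemma \ref{L:1} and Schanuel's conjecture to the full collection yields that the values $\{\lambda_{\Gamma_1}(\gamma_1^{(k)})\}_k$ are algebraically independent over $\mathscr{F}_2 = \mathscr{F}(M_2)$, giving $(T_1)$ (and hence $(N_1)$).

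For (b), assume $\Gamma_1$ and $\Gamma_2$ are weakly commensurable. Theorem \ref{T:BC1} says $\sG_1$ and $\sG_2$ are twins over $K = K_1 = K_2$. Using the description of \S\ref{S:BC} via commutative \'etale subalgebras with involution (or, equivalently, via the cohomological identification $\psi\colon H^1(K,W_1) \to H^1(K,W_2)$), there is a natural correspondence between maximal $K$-tori $T_1 \subset \sG_1$ and $T_2 \subset \sG_2$ whose character lattices are matched, after identifying the standard coordinates $(e_1,\dots,e_n)$, by sending each $B_n$-short root $e_i$ to the $C_n$-long root $2e_i$, with the roots $\pm e_i \pm e_j$ fixed. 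For a weakly commensurable pair $\gamma_1 \in T_1(\R)$ and $\gamma_2 \in T_2(\R)$ matched by this correspondence (and after replacing elements by suitable integer powers, tracked by the $n_\gamma$ of (\ref{E:length})), formula (\ref{E:positive}) together with the explicit $B_n$ and $C_n$ root data gives
$$
\lambda_{\Gamma_1}(\gamma_1)^2 = 2(2n-1)\sum_{i=1}^n (\log|t_i|)^2, \qquad \lambda_{\Gamma_2}(\gamma_2)^2 = 4(n+1)\sum_{i=1}^n (\log|t_i|)^2,
$$
where $t_i$ are the coordinates of $\gamma_1$ in the split part of $T_1$. Dividing yields exactly $\lambda^2 = (2n+2)/(2n-1)$. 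Running over all weakly commensurable pairs and invoking the density/exhaustion properties of generic elements (\S\ref{S:Gen}) to verify that such pairs produce all of $\Q\cdot L(M_i)$, one obtains $\Q\cdot L(M_2) = \lambda \cdot \Q\cdot L(M_1)$.

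The principal technical difficulty lies in step (a): one must ensure that the generic semi-simple elements built in $\Gamma_1$ are \emph{systematically} not weakly commensurable with any element of $\Gamma_2$, and that the resulting $\lambda$-values are numerous enough to give infinite transcendence degree over $\mathscr{F}_2$; this requires a careful adaptation of the generic-tori machinery to the mixed-type setting $\textsf{B}_n$/$\textsf{C}_n$. In step (b) the subtle point is to verify that the torus-level correspondence supplied by the twin condition (which globally is produced from local matchings via the étale-subalgebra dictionary of \cite{GarR}) really does intertwine the two $\lambda$-functions up to the predicted scalar; once this identification is set up, the value of the ratio is forced by the elementary sum-of-squares-of-roots computation sketched above.
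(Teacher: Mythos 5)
Your split into cases according to whether $\sG_1,\sG_2$ are twins, via Theorem \ref{T:BC1}, is the right architecture, and the root-system arithmetic in part (b) checks out: in the $\R$-split case (which the twin hypothesis forces once the $M_i$ are nontrivial), one finds $\lambda_{\Gamma_1}(\gamma)^2 = 2(2n-1)\sum_i(\log|t_i|)^2$ for $\textsf{B}_n$ and $\lambda_{\Gamma_2}(\gamma)^2 = 4(n+1)\sum_i(\log|t_i|)^2$ for $\textsf{C}_n$, so $\lambda^2 = (2n+2)/(2n-1)$. The remaining subtlety in (b) is to use the Isogeny Theorem \ref{T:Gen4}(2) (which already produces the scalar $\lambda$ relating $\Phi(G_1,T_1)$ to $\Phi(G_2,T_2)$) to see that the torus-to-torus dictionary matches the $\lambda$-values uniformly for \emph{all} weakly commensurable pairs, not only the generic ones you single out.

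There is a genuine gap in part (a). Condition $(T_1)$ requires infinite transcendence degree of $\mathscr{F}_1\mathscr{F}_2$ over $\mathscr{F}_2 = \Q(L(M_2))$, but Proposition \ref{P:LC-WC} only gives algebraic independence of $\lambda_{\Gamma_1}(\gamma_1)$ from $\lambda_{\Gamma_2}(\gamma_2)$ for a \emph{single pair}, and Lemma \ref{L:1} likewise treats one $\gamma_1$ and one $\gamma_2$ at a time. Pairwise independence of each $\lambda_{\Gamma_1}(\gamma_1^{(k)})$ from each element of $L(M_2)$ does \emph{not} imply that the family $\{\lambda_{\Gamma_1}(\gamma_1^{(k)})\}_k$ is algebraically independent over the whole field $\mathscr{F}_2$: a relation like $\chi(\gamma_1^{(1)})\cdot\chi'(\gamma_1^{(2)}) = \chi_2(\gamma_2')\cdot\chi_2'(\gamma_2'')$, involving several elements of $\Gamma_2$ simultaneously, is invisible to Lemma \ref{L:1} but destroys the required transcendence (just as $ab$ is algebraically independent from $a$ and from $b$ separately yet lies in $\Q(a,b)$). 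This is precisely why the paper replaces weak commensurability by \emph{weak containment} (Definition 8.8) and reduces everything to verifying property $(C_i)$: one must show that for every $m$ there are $m$ multiplicatively independent elements of infinite order in $\Gamma_i$ that are not weakly contained in $\Gamma_{3-i}$, and then apply the Schanuel-based implication $(C_i)\Rightarrow(T_i),(N_i)$ from \cite[Corollary 7.3]{PR-Fields}. Concretely, in the non-twin case one combines Theorem \ref{T:BC1} with the multidimensional Theorem \ref{T:Gen3} and the weak-containment version of the Isogeny Theorem (\cite[Theorem 2.3]{PR-Fields}); your ``iterated Lemma \ref{L:1}'' step needs to be replaced by this machinery.
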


\vskip2mm

The following interesting result holds for all types (cf.\,Theorem
\ref{T:WC10}).

\begin{thm}\label{T:Fields5}
For $i = 1, 2$, let $M_i = \fX_{\Gamma_i}$ be an arithmetically
defined locally symmetric space, and assume that $w_1 = w_2$. If
$M_2$ is compact and $M_1$ is not, then conditions $(T_1)$ and
$(N_1)$ hold.
\end{thm}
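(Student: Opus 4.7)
The strategy is to combine the rank-theoretic consequences of weak commensurability (Theorem \ref{T:WC6}) with Schanuel's conjecture in order to convert the compactness asymmetry between $M_1$ and $M_2$ into infinite transcendence degree of $\sF_1$ over $\sF_2$.

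To set up the reduction: if $K_{\Gamma_1} \not\subset K_{\Gamma_2}$, Theorem \ref{T:Fields1}(2) delivers $(T_1)$ directly, and the remaining strict inclusion $K_{\Gamma_1} \subsetneq K_{\Gamma_2}$ can be handled by the same scheme below applied over $K_{\Gamma_1}$, since $\sG_1$ is $K_{\Gamma_1}$-isotropic. So I may assume $K_{\Gamma_1} = K_{\Gamma_2} =: K$, with $\Gamma_i$ being $(\sG_i, K, S_i)$-arithmetic. By the Godement compactness criterion, compactness of $\fX_{\Gamma_i}$ is equivalent to $K$-anisotropy of $\sG_i$, so $\rk_K \sG_2 = 0 < \rk_K \sG_1$. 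Combining Theorems \ref{T:WC3} and \ref{T:WC6}, this $K$-rank gap implies that $\Gamma_1$ and $\Gamma_2$ are not weakly commensurable --- any such relation would force $S_1 = S_2$ and equal $K$-ranks, contradicting the strict inequality.

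The main step is promoting non-weak-commensurability to $(T_1)$. I would construct inductively an infinite sequence of semi-simple elements $\gamma_1^{(k)} \in \Gamma_1$ $(k \geq 1)$ of infinite order with the properties that (i) no $\gamma_1^{(k)}$ is weakly commensurable to any semi-simple element of $\Gamma_2$ of infinite order, and (ii) the union of the associated log-character values (for a $\Q$-basis of positive characters on the maximal $\R$-torus through each $\gamma_1^{(k)}$) is $\Q$-linearly independent, and remains so after adjoining any finite collection of log-character values coming from semi-simple elements of $\Gamma_2$. Property (ii) is maintained inductively through Lemma \ref{L:1}. For (i), the $K$-isotropy of $\sG_1$ provides a nontrivial $K$-split subtorus $S \subset \sG_1$; $\Gamma_1 \cap S(K)$ then contains $S(\cO_K(S_1))$ up to finite index, which --- after enlarging $S_1$ by anisotropic places if necessary, a move that does not affect the commensurability class of $\Gamma_1$ --- has positive rank by Dirichlet's $S$-unit theorem, producing infinitely many semi-simple $\gamma_1^{(k)}$ with multiplicatively independent character values in $K^\times$. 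The $K$-anisotropy of $\sG_2$ in turn forces every semi-simple $\gamma_2 \in \Gamma_2$ of infinite order to lie in a $K$-torus whose character-value group meets $K^\times$ only in roots of unity, which, combined with the generic-elements technique of \cite{PR-Reg} for circumventing the finitely many potential exceptional relations, maintains (i) throughout the induction.

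I then conclude exactly as in Proposition \ref{P:LC-WC}: arithmeticity of the $\Gamma_i$ gives property $(A)$, so all log-character values involved are logs of algebraic numbers; Schanuel's conjecture upgrades the $\Q$-linear independence from (ii) to algebraic independence over $\Q$; and via formula (\ref{E:lambda4}) the lengths $\lambda_{\Gamma_1}(\gamma_1^{(k)}) \in L(M_1)$ are algebraically independent over $\sF_2$. This is exactly $(T_1)$, and $(N_1)$ follows immediately since $L(M_1) \subset A \cdot \Q \cdot L(M_2)$ with finite $A$ would force $\sF_1 \subset \sF_2(A)$, contradicting infinite transcendence degree. The main obstacle is guaranteeing property (i) uniformly across the infinite inductive sequence: weak commensurability is an eigenvalue condition over $\overline{K}$ whereas $K$-anisotropy is a condition on $K$-rational characters, and making the transition between the two rigorous --- at every inductive step, without the exceptional loci accumulating --- is precisely where the generic-tori machinery of \cite{PR-Reg} and the analysis of weak commensurability in $K$-anisotropic groups from the preceding sections become indispensable.
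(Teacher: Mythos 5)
Your overall scaffolding is in the right spirit — Godement's criterion translates the compactness asymmetry into a $K$-rank gap $\rk_K \sG_1 > 0 = \rk_K \sG_2$, and Theorems \ref{T:WC3} and \ref{T:WC6} then give non-weak-commensurability — but the step you call ``promoting non-weak-commensurability to $(T_1)$'' contains a genuine gap, and it is precisely the gap that the paper's machinery of \emph{weak containment} and property $(C_i)$ (Definition 8.8) was invented to close.

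Your property (ii) is indeed the right target: you need the log-character values of your sequence $\gamma_1^{(k)}$ to stay $\Q$-linearly independent after adjoining the log-character values of \emph{any finite collection} of semi-simple elements of $\Gamma_2$. But you assert that this ``is maintained inductively through Lemma \ref{L:1}.'' Lemma \ref{L:1} only handles a single pair $(\gamma_1,\gamma_2)$: if $\gamma_1$ and $\gamma_2$ are not weakly commensurable and $S_1$, $S_2$ are each independent, then $S_1 \cup S_2$ is independent. It says nothing about a relation of the form $\chi_1(\gamma_1^{(1)})\,\chi_2(\gamma_1^{(2)}) = \chi(\gamma_2) \neq 1$, which is perfectly compatible with no $\gamma_1^{(k)}$ being weakly commensurable to any element of $\Gamma_2$. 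In other words, your condition (i) (pairwise non-weak-commensurability) is strictly weaker than what (ii) requires; to rule out such joint relations you need the elements $\gamma_1^{(1)}, \ldots, \gamma_1^{(m)}$ to fail to be \emph{weakly contained} in $\Gamma_2$ in the sense of Definition 8.8(a), which is exactly condition $(C_1)$. The paper's stated route to Theorem \ref{T:Fields5} (indicated in \S\ref{S:Fields} after Definition 8.8) is to prove $(C_1)$ directly and then invoke the Schanuel-based implication $(C_i) \Rightarrow (T_i),(N_i)$ from [Corollary 7.3 of \cite{PR-Fields}]; Lemma \ref{L:1} cannot substitute for that implication because it is a two-element statement. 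A secondary issue is the claim that $K$-anisotropy of $\sG_2$ forces eigenvalues of $\gamma_2 \in \Gamma_2$ to meet $K^\times$ only in roots of unity: weak commensurability is an eigenvalue condition over $\overline{K}$, whereas anisotropy constrains only $K$-rational characters, and in higher rank the two do not line up as cleanly as your sketch suggests — this is again a place where the generic-tori/Isogeny-Theorem machinery, not just the unit group of a split subtorus, is what actually carries the argument. Finally, your reduction in the case $K_{\Gamma_1} \subsetneq K_{\Gamma_2}$ is waved at but not filled in: Theorem \ref{T:Fields1}(2) in that direction only gives $(T_2)$, not $(T_1)$, so the claimed ``same scheme over $K_{\Gamma_1}$'' would need to be spelled out.
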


Finally, we have the following result which shows that the notion
of ``length-similarity" for arithmetically defined locally symmetric
spaces is redundant if $G_1$ and $G_2$ are of the same type (cf.,
however, Theorem \ref{T:Fields4} regarding the case where $G_1$ and
$G_2$ are of types $\textsf{B}_{n}$ and $\textsf{C}_{n}$).

\begin{cor}\label{C:Fields2}
Let $M_i = \fX_{\Gamma_i}$, for $i = 1, 2$, be arithmetically
defined locally symmetric spaces. Assume that there exists $\lambda
\in \R_{> 0}$ such that
$$
\Q \cdot L(M_1) = \lambda \cdot \Q \cdot L(M_2).
$$
Then

\vskip1mm

\ $(i)$ \parbox[t]{13cm}{if $G_1$ and $G_2$ are of the same type
which is different from $\textsf{A}_{n}$, $\textsf{D}_{2n+1}$
$(n > 1)$ and $\textsf{E}_6$, then $M_1$ and $M_2$ are
commensurable, hence length-commensurable;}

\vskip1mm

$(ii)$ \parbox[t]{13cm}{if $G_1$ and $G_2$ are of the same type
which is one of the following: $\textsf{A}_{n}$,
$\textsf{D}_{2n+1}$ $(n > 1)$ or $\textsf{E}_6$, then,
provided that $K_{\Gamma_i} \neq \Q$ for at least one $i \in \{1 ,
2\}$, the spaces $M_1$ and $M_2$ are length-commensurable (although
not necessarily commensurable).}
\end{cor}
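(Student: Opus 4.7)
The plan is to view this corollary as a direct consequence of Theorems \ref{T:Fields2} and \ref{T:Fields3}: the ``length-similarity'' hypothesis $\Q \cdot L(M_1) = \lambda \cdot \Q \cdot L(M_2)$ rules out the dispersion alternative in both dichotomies, leaving only the commensurability (resp., length-commensurability) conclusion.

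First I would extract from the hypothesis the containments $L(M_1) \subset \{\lambda\} \cdot \Q \cdot L(M_2)$ and, symmetrically, $L(M_2) \subset \{\lambda^{-1}\} \cdot \Q \cdot L(M_1)$. Since $\{\lambda\}$ and $\{\lambda^{-1}\}$ are in particular finite subsets of $\R$, this is precisely the negation of condition $(N_1)$ and of condition $(N_2)$ as formulated in \S\ref{S:Fields}. Thus neither $(N_1)$ nor $(N_2)$ can hold under the hypothesis of the corollary, so the assertion ``$(T_i)$ and $(N_i)$ hold for at least one $i\in\{1,2\}$'' is excluded.

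For part (i), the subgroups $\Gamma_1$ and $\Gamma_2$ are arithmetic and $G_1$, $G_2$ are of the same Killing--Cartan type distinct from $\textsf{A}_n$, $\textsf{D}_{2n+1}$ $(n>1)$ and $\textsf{E}_6$, so Theorem \ref{T:Fields2} gives the dichotomy that either $M_1$ and $M_2$ are commensurable, or $(T_i)$ and $(N_i)$ hold for some $i$. Since the second alternative has been ruled out by the previous paragraph, we conclude that $M_1$ and $M_2$ are commensurable, which implies in particular that they are length-commensurable. For part (ii), the common type is one of $\textsf{A}_n$, $\textsf{D}_{2n+1}$ $(n>1)$ or $\textsf{E}_6$ and we have the extra hypothesis $K_{\Gamma_i}\neq \Q$ for at least one $i$, so Theorem \ref{T:Fields3} applies and produces the analogous dichotomy with ``length-commensurable'' in place of ``commensurable.'' The same failure of $(N_i)$ forces the length-commensurability alternative, yielding (ii).

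In this sense there is essentially no obstacle left in the argument: all the real work has already been absorbed into Theorems \ref{T:Fields2} and \ref{T:Fields3}, whose proofs rely on Schanuel's conjecture together with the results on weakly commensurable arithmetic subgroups from \S\ref{S:WC-Res}. The only point I would want to double-check is the (entirely formal) step that a singleton qualifies as a finite set $A$ in the statement of $(N_i)$, ensuring that length-similarity is a genuine special case of the negation of $(N_i)$; beyond that verification, the corollary is a clean specialization of the earlier theorems to the geometrically natural scaling relation between the rational length spectra.
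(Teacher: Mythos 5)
Your derivation is correct and is exactly the argument the paper intends: the length-similarity hypothesis $\Q \cdot L(M_1) = \lambda \cdot \Q \cdot L(M_2)$ gives $L(M_1) \subset \{\lambda\}\cdot \Q \cdot L(M_2)$ and $L(M_2) \subset \{\lambda^{-1}\}\cdot\Q\cdot L(M_1)$, so $(N_1)$ and $(N_2)$ both fail (a singleton is, of course, a finite set), which rules out the second alternative in the dichotomies of Theorems \ref{T:Fields2} and \ref{T:Fields3} and leaves the (length-)commensurability conclusion. The paper treats this corollary as an immediate specialization of those theorems, and your reading of the quantifier structure (``$(T_i)$ and $(N_i)$ hold for at least one $i$'' is negated as soon as $(N_i)$ fails for both $i$, a fortiori since $(T_i)\Rightarrow(N_i)$) is sound.
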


The proofs of the results in this section use a generalization of
the notion of weak commensurability which we termed {\it weak
containment}. To give a precise definition, we temporarily return to
the general set-up where $G_1$ and $G_2$ are semi-simple algebraic
groups defined over a field $F$ of characteristic zero, and
$\Gamma_i$ is a Zariski-dense subgroup of $G_i(F)$ for $i = 1,
2$.

\vskip2mm

\noindent {\bf 8.8. Definition.} (a) Semi-simple elements
$\gamma^{(1)}_1, \ldots , \gamma^{(1)}_{m_1} \in \Gamma_1$ are {\it
weakly contained} in $\Gamma_2$ if there are semi-simple elements
$\gamma^{(2)}_1, \ldots , \gamma^{(2)}_{m_2}$ such that
\begin{equation}\label{E:WCont}
\chi^{(1)}_1(\gamma^{(1)}_1) \cdots
\chi^{(1)}_{m_1}(\gamma^{(1)}_{m_1}) = \chi^{(2)}_1(\gamma^{(2)}_1)
\cdots \chi^{(2)}_{m_2}(\gamma^{(2)}_{m_2}) \neq 1
\end{equation}
for some maximal $F$-tori $T^{(j)}_k$ of $G_j$ whose group of $F$-rational points contains
$\gamma^{(j)}_k$ and some characters $\chi^{(j)}_k$ of $T^{(j)}_k$
for $j \in \{ 1 , 2 \}$ and $k \leqslant m_j$. \footnote{We note
that (\ref{E:WCont}) means that the subgroups of
$\overline{F}^{\times}$ generated by the eigenvalues of
$\gamma^{(1)}_1, \ldots , \gamma^{(1)}_{m_1}$ and by those of
$\gamma^{(2)}_1, \ldots , \gamma^{(2)}_{m_2}$ for some
(equivalently, any) matrix realizations of $G_1 \subset
\mathrm{GL}_{N_1}$ and $G_2 \subset \mathrm{GL}_{N_2}$, intersect
nontrivially.}

\vskip1mm

(b) Semi-simple elements $\gamma^{(1)}_1, \ldots ,
\gamma^{(1)}_{m_1} \in \Gamma_1$ are {\it multiplicatively
independent} if for some (equivalently, any) choice of maximal
$F$-tori $T^{(1)}_i$ of $G_1$ such that $\gamma^{(1)}_i \in T_i(F)$
for $i \leqslant m_1$, a relation of the form
$$
\chi^{(1)}_1(\gamma^{(1)}_1) \cdots
\chi^{(1)}_{m_1}(\gamma^{(1)}_{m_1}) = 1,
$$
where $\chi_i \in X(T_i)$ implies that
$$
\chi^{(1)}_1(\gamma^{(1)}_1) = \cdots =
\chi^{(1)}_{m_1}(\gamma_{m_1}) = 1.
$$

\vskip1mm

(c) We say that $\Gamma_1$ and $\Gamma_2$ as above satisfy {\it
property $(C_i)$}, where $i = 1$ or $2$, if for any $m \geqslant 1$
there exist semi-simple elements $\gamma^{(i)}_1, \ldots ,
\gamma^{(i)}_m \in \Gamma_i$ of infinite order that are
multiplicatively independent and are {\it not} weakly contained in
$\Gamma_{3-i}$.

\vskip2mm

Using Schanuel's conjecture, we prove (cf.\,\cite[Corollary
7.3]{PR-Fields}) that {\it if $\fX_{\Gamma_1}$ and $\fX_{\Gamma_2}$
are locally symmetric spaces as above with finitely generated
Zariski-dense fundamental groups $\Gamma_1$ and $\Gamma_2$, then the
fact that these groups satisfy property $(C_i)$ for some $i \in \{1
, 2\}$ implies that the locally symmetric spaces satisfy conditions
$(T_i)$ and $(N_i)$ for the same $i$.} So, the way we prove Theorems
\ref{T:Fields1}-\ref{T:Fields3} and \ref{T:Fields4}-\ref{T:Fields5}
is by showing that condition $(C_i)$ holds in the respective
situations. For example, Theorem \ref{T:Fields1} is a consequence of
the following algebraic result.

\addtocounter{thm}{1}

\begin{thm}\label{T:Fields6}
Assume that $\Gamma_1$ and $\Gamma_2$ are finitely generated (and
Zariski-dense).

\vskip2mm

\ $(i)$ If $w_1 > w_2$ then condition $(C_1)$ holds;

\vskip.5mm

$(ii)$ If $w_1 = w_2$ but $K_{\Gamma_1} \not\subset K_{\Gamma_2}$
then again $(C_1)$ hods.

\vskip2mm

\noindent Thus, unless $w_1 = w_2$ and $K_{\Gamma_1} =
K_{\Gamma_2}$, condition $(C_i)$ holds for at least one $i \in \{1 ,
2\}$.
\end{thm}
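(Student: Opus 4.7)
\medskip

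\noindent\textit{Proof plan for Theorem \ref{T:Fields6}.}
The strategy is to combine the theory of generic elements of \cite{PR-Reg} with refinements of the arguments used to prove Theorems \ref{T:WC1} and \ref{T:WC2}. Throughout, we fix $i = 1$ and aim to produce, for each $m \geqslant 1$, $m$ semi-simple elements $\gamma^{(1)}_1,\ldots,\gamma^{(1)}_m \in \Gamma_1$ of infinite order that are multiplicatively independent in the sense of Definition 8.8(b) and are not weakly contained in $\Gamma_2$. The plan is to first arrange the multiplicative independence from generic considerations internal to $\Gamma_1$, and then use the genericity to rule out any weak containment relation with $\Gamma_2$, deriving either $w_1 \leqslant w_2$ or $K_{\Gamma_1} \subset K_{\Gamma_2}$ as a contradiction in cases (i) and (ii) respectively.

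The first step is the production of the elements. By the main result of \cite{PR-Reg}, the set of generic semi-simple elements in $\Gamma_1$ (those lying in a generic maximal $F$-torus of $G_1$, with infinite order and with $\mathrm{Gal}(\overline{F}/K_{\Gamma_1})$ acting on the character lattice via the full Weyl group $W_1$) is Zariski-dense. I would inductively pick $\gamma^{(1)}_1,\ldots,\gamma^{(1)}_m$, each generic and lying in its own generic torus $T^{(1)}_k$, ensuring at each step that the eigenvalues of $\gamma^{(1)}_k$ are multiplicatively independent from those of the previously chosen elements modulo roots of unity. The Zariski-density of the generic locus, combined with the fact that only countably many multiplicative relations can involve the finitely many previously chosen elements (each corresponding to a proper subvariety of the ambient torus), guarantees that such a choice is possible.

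The second, and principal, step is to rule out weak containment. Suppose toward a contradiction that a relation \eqref{E:WCont} holds with elements $\gamma^{(2)}_1,\ldots,\gamma^{(2)}_{m_2} \in \Gamma_2$ and characters $\chi^{(j)}_k \in X(T^{(j)}_k)$. Because the $\gamma^{(1)}_k$ are multiplicatively independent, none of the exponents of the $\chi^{(1)}_k$ can all vanish, so the relation is genuinely witnessed by a product of eigenvalues $\alpha \in \overline{F}^\times$ equal to a product of eigenvalues coming from $\Gamma_2$. Now, the genericity of each $T^{(1)}_k$ implies that $\mathrm{Gal}(\overline{F}/K_{\Gamma_1})$ permutes the $|W_1|$ characters in each Galois orbit faithfully, and by picking generic enough elements from $\Gamma_2$ (or, more carefully, analyzing the characters $\chi^{(2)}_k$ via the Galois-theoretic structure of the tori $T^{(2)}_k$ containing arbitrary semi-simple $\gamma^{(2)}_k \in \Gamma_2$), the Galois orbit of $\alpha$ over $K_{\Gamma_2}$ has size at most a power of $w_2$. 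This is precisely the same dichotomy extracted in the proofs of Theorems \ref{T:WC1} and \ref{T:WC2}: in case (i) a comparison of Galois orbit sizes forces $w_1 \leqslant w_2$, and in case (ii), the fact that $\alpha$ and all its Galois conjugates lie in a field containing $K_{\Gamma_2}$ but arise from characters of generic $\Gamma_1$-elements forces $K_{\Gamma_1} \subset K_{\Gamma_2}$; both contradict the hypothesis.

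The main obstacle is the passage from the classical weak-commensurability setting (a single equality $\chi_1(\gamma_1) = \chi_2(\gamma_2)$) to the multivariable product relation \eqref{E:WCont}. In the latter, the Galois-theoretic analysis must be performed on products of character values drawn from several distinct tori on each side, and one must verify that genericity still forces the symmetry and field-of-definition constraints needed to invoke the Weyl-group order comparison or the trace-field argument. The careful point, and what I expect to require the most work, is choosing the generic elements $\gamma^{(1)}_k$ in coordinated fashion so that the Galois orbits of products of their character values cannot accidentally coincide with orbits arising from $\Gamma_2$-products, which amounts to a more delicate application of the generic-element machinery than is needed in the proofs of Theorems \ref{T:WC1} and \ref{T:WC2}.
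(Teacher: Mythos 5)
Your high-level strategy---produce multiplicatively independent generic elements in $\Gamma_1$, then argue by contradiction from genericity that they cannot be weakly contained in $\Gamma_2$---is the right framework, and your first step is essentially Theorem~\ref{T:Gen3}, which the paper already supplies. That said, the inductive sketch you give for that step (``only countably many multiplicative relations'') does not really engage with the mechanism in \cite{PR-Fields}; the actual construction controls multiplicative independence via valuations at the places $v_i$ used to force genericity, not via a cardinality argument. This is a minor quibble, though, since you could simply cite Theorem~\ref{T:Gen3}.

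The genuine gap is in your second step. You suppose a relation \eqref{E:WCont} holds and claim that ``a comparison of Galois orbit sizes forces $w_1 \leqslant w_2$'' and, in case (ii), that the conjugates of $\alpha$ living over $K_{\Gamma_2}$ forces $K_{\Gamma_1} \subset K_{\Gamma_2}$; but neither inference is actually established. The elements $\gamma^{(2)}_l$ are existentially quantified, so you cannot ``pick generic enough elements from $\Gamma_2$'' as your proposal momentarily suggests---you must rule out \emph{every} choice, including elements lying in highly non-generic tori with small splitting fields. Nothing about the Galois orbit of the single product $\alpha$ directly pins down $w_2$ or $K_{\Gamma_2}$: the size of the orbit of $\alpha$ over $K_{\Gamma_2}$ depends on the stabilizers of the characters $\chi^{(2)}_l$ and on the Galois structure of the particular tori $T^{(2)}_l$, not just on the order of $W_2$. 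What makes the argument work in \cite{PR-Fields} is a multidimensional analogue of the Isogeny Theorem (Theorem~\ref{T:Gen4}), namely \cite[Theorem 2.3]{PR-Fields}, which converts a weak-containment relation between a \emph{generic irreducible} torus on the $\Gamma_1$ side and the product of tori on the $\Gamma_2$ side into an isogeny of tori respecting the root data. It is this isogeny---not a naive orbit-size count---that propagates the Weyl-group order and the minimal splitting/trace-field information from $G_1$ to $G_2$ and produces the contradiction with $w_1 > w_2$ or $K_{\Gamma_1}\not\subset K_{\Gamma_2}$. Your proposal correctly senses that this is where the difficulty lies, but it leaves the key lemma unstated and unsubstituted, so as written the argument does not close.
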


In \cite{PR-Fields} we prove much more precise results in the case
where the $\Gamma_i$ are arithmetic, which leads to the geometric
applications described above.  We refer the interested reader to
\cite{PR-Fields} for the technical formulations of these results;
a point we would like to make here, however, is that our
``algebraic" results (i.e., those asserting that condition $(C_i)$
holds in certain situation) do not depend on Schanuel's
conjecture.

\vskip3mm

\section{Generic elements and tori}\label{S:Gen}

The analysis of weak commensurability and its variations in
\cite{GarR},  \cite{PR-IHES} and  \cite{PR-Fields} relies on the
remarkable fact, first established in \cite{PR-Reg}, that any
Zariski-dense subgroup of the group of rational points of a
semi-simple group over a finitely generated field of characteristic
zero contains special elements, to be called {\it generic elements} here. It is
convenient to begin our discussion of these elements with the
definition of {\it generic tori}.

Let $G$ be a connected semi-simple algebraic group defined over an
infinite field $K$. Fix a maximal $K$-torus $T$ of $G$, and, as
usual, let $\Phi = \Phi(G , T)$ denote the corresponding root
system, and let $W(G , T)$ be its Weyl group. Furthermore, we let
$K_T$ denote the (minimal) splitting field of $T$ in a fixed
separable closure $\overline{K}$ of $K$. Then the natural action of
the Galois group $\Ga(K_T/K)$ on the character group $X(T)$ of $T$ induces
an injective homomorphism
$$
\theta_T \colon \Ga(K_T/K) \to \mathrm{Aut}(\Phi(G , T)).
$$
We say that $T$ is {\it generic} (over $K$) if
\begin{equation}\label{E:Gen1}
\theta_T(\Ga(K_T/K)) \supset W(G , T).
\end{equation}
For example, any maximal $K$-torus of $G = \mathrm{SL}_n/K$ is of
the form $T = \mathrm{R}^{(1)}_{E/K}(\mathrm{GL}_1)$ for some
$n$-dimensional commutative  \'etale $K$-algebra $E$. Then such a torus is
generic over $K$ if and only if $E$ is a separable field extension
of $K$ and the Galois group of the normal closure $L$ of $E$ over
$K$ is isomorphic to the symmetric group $S_n$. It is well-known
that for each $n \geqslant 2$ one can write down a system of congruences
such that any monic polynomial $f(t) \in \Z[t]$ satisfying this
system of congruences has Galois group $S_n$. It turns out that one
can prove a similar statement for maximal tori of an arbitrary
semi-simple algebraic group $G$ over a finitely generated field $K$
of characteristic zero (cf.\,\cite[Theorem 3]{PR-Reg}). In order to
avoid technical details, we will restrict ourselves here to the case
of absolutely almost simple groups.

\begin{thm}\label{T:Gen1}
{\rm (\cite[Theorem 3.1]{PR-IHES})} Let $G$ be a connected
absolutely almost simple algebraic group over a finitely generated
field $K$ of characteristic zero, and let $r$ be the number of
nontrivial conjugacy classes of the Weyl group of $G$. Then

\vskip2mm

\noindent {\rm (1)} \parbox[t]{13.5cm}{There exist $r$ inequivalent
nontrivial discrete valuations $v_1, \ldots , v_r$ of $K$ such that
the completion $K_{v_i}$ is locally compact and $G$ splits over
$K_{v_i}$ for all $i = 1, \ldots , r$;}

\vskip1mm

\noindent {\rm (2)} \parbox[t]{13.5cm}{For any choice of discrete
valuations $v_1, \ldots , v_r$ as in {\rm (1)}, one can find maximal
$K_{v_i}$-tori $T(v_i)$ of $G$, one for each $i \in \{1, \ldots ,
r\}$, with the property that  any maximal $K$-torus $T$ of $G$ which
is conjugate to $T(v_i)$ by an element of $G(K_{v_i})$, for all $i =
1, \ldots , r$, is generic (i.e., the inclusion (\ref{E:Gen1})
holds).}
\end{thm}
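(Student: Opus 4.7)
My plan is to reduce the genericity condition $\theta_T(\Ga(K_T/K)) \supset W(G,T)$ to a purely group-theoretic statement and then feed in local Frobenius data from the $r$ valuations. The crucial input is Jordan's lemma: no finite group is the union of conjugates of a proper subgroup, so any subgroup $H$ of $W(G,T)$ meeting every conjugacy class of $W(G,T)$ must in fact coincide with $W(G,T)$. Since by assumption $W(G,T)$ has exactly $r$ nontrivial conjugacy classes, it suffices to exhibit, for each such class $c_i$, at least one element of $\theta_T(\Ga(K_T/K)) \cap W(G,T)$ lying in $c_i$ -- this is what the Frobenii at $v_1,\dots,v_r$ will supply (the identity class is automatic).

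For part (1), I would use that $K$ is finitely generated of characteristic zero and that $G$ splits over some finite Galois extension $L/K$. Taking a smooth integral model $\mathcal{X} \to \mathrm{Spec}\,\Z[1/N]$ of finite type with function field $K$, closed points of $\mathcal{X}$ give rise to discrete valuations of $K$ whose completions have finite residue field and are therefore locally compact. A Chebotarev-type density theorem for finitely generated fields (closed points at which the Frobenius is trivial have positive density in the appropriate sense) then produces infinitely many such valuations which split completely in $L/K$, so that $L \hookrightarrow K_v$ and $G$ becomes $K_v$-split. Choosing any $r$ of these yields $v_1,\dots,v_r$.

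For part (2), list the nontrivial conjugacy classes of $W := W(G,T)$ as $c_1,\dots,c_r$ and pick representatives $w_i \in c_i$. Over the split field $K_{v_i}$, fix a split maximal $K_{v_i}$-torus $T_0^{(i)}$ with normalizer $N_0^{(i)}$. Since $G$ is $K_{v_i}$-split, the map $H^1(K_{v_i}, N_0^{(i)}) \to H^1(K_{v_i}, W)$ is surjective, and the source classifies $G(K_{v_i})$-conjugacy classes of maximal $K_{v_i}$-tori. Using the unramified quotient $\mathrm{Gal}(K_{v_i}^{\mathrm{unr}}/K_{v_i}) \twoheadrightarrow \hat{\Z}$, I would build an unramified cocycle sending the Frobenius to $w_i$ and take $T(v_i)$ to be the corresponding twisted maximal $K_{v_i}$-torus, so that Frobenius acts on $X(T(v_i))$ through $w_i$. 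Now let $T$ be a maximal $K$-torus of $G$ that is $G(K_{v_i})$-conjugate to $T(v_i)$ for every $i$; the conjugation is Galois-equivariant on characters, hence the decomposition subgroup at $v_i$ acts on $X(T \otimes_K K_{v_i})$ through a $W$-conjugate of $w_i$ (using normality of $W$ in $\mathrm{Aut}(\Phi)$ to stay inside $W$). Thus $\theta_T(\Ga(K_T/K)) \cap W$ meets each $c_i$, and Jordan's lemma forces it to equal $W$, proving that $T$ is generic.

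The main technical hurdles I expect are twofold. First, in part (1), setting up a sufficiently nice integral model of $K$ and invoking the correct form of Chebotarev density for finitely generated fields of characteristic zero -- this is classical but requires some care, particularly in handling bad primes on the model. Second, in part (2), the delicate bookkeeping needed to track the Galois-equivariance of the $G(K_{v_i})$-conjugation identifying $T(v_i)$ with $T \otimes_K K_{v_i}$: one must verify that the Frobenius at $v_i$ genuinely contributes an element of $W(G,T)$ lying in the conjugacy class $c_i$, rather than merely an element of $\mathrm{Aut}(\Phi)$ conjugate to $w_i$. This cocycle and decomposition-group bookkeeping, rather than any single hard step, is where most of the effort goes.
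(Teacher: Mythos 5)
Your overall strategy matches the paper's: build the $T(v_i)$ so that the Frobenius at $v_i$ acts on $X(T)$ through the prescribed nontrivial conjugacy class $c_i$, observe that the resulting Galois image in $W(G,T)$ then meets every conjugacy class, and conclude by Jordan's lemma (a proper subgroup of a finite group cannot meet every conjugacy class). For part (2) your construction via unramified cocycles sending Frobenius to $w_i$, together with the surjectivity of $H^1(K_{v_i},N) \to H^1(K_{v_i},W)$ for split $G$ and the vanishing of $H^1(K_{v_i},\widetilde G)$, is precisely the paper's argument; you also correctly identify the delicate point, namely that the $G(K_{v_i})$-conjugation carrying $T(v_i)$ to $T$ is Galois-equivariant on character lattices, so the Frobenius contribution lands in the canonically corresponding conjugacy class of $W(G,T)$ rather than merely in some class of $\operatorname{Aut}(\Phi)$. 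The one place where your route genuinely diverges from the paper's is part (1). You spread $K$ out to a smooth finite-type $\Z$-scheme and invoke a Chebotarev-type density theorem for finitely generated fields to produce closed points splitting completely in the splitting field $L$ of $G$. The paper instead reduces part (1) to its Proposition 9.2 (the $p$-adic embedding theorem): applied to the finitely generated field $L$, it yields embeddings $L \hookrightarrow \Q_p$ for infinitely many $p$; restricting to $K$ produces valuations $v$ with $K_v = \Q_p$ over which $G$ splits. Both arguments are correct; the paper's is lighter because the embedding theorem is a tool already in hand (and it gives the sharper conclusion $K_{v_i}=\Q_{p_i}$), whereas Chebotarev for finitely generated fields is somewhat heavier machinery aimed at the same end. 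One small bonus of your write-up: you make the Jordan's-lemma step explicit, whereas the paper only says ``it is not difficult to show that the tori $T(v_1),\ldots,T(v_r)$ are as required.''
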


The first assertion is an immediate consequence of the following,
which actually shows that we can  find the $v_j$'s so that $K_{v_j}
= \Q_{p_j}$, where $p_1, \ldots ,p_r$ are distinct primes.

\begin{prop}\label{P:Gen1}
{\rm (\cite{PR-Sub}, \cite{PR-Reg})} Let $\mathcal{K}$ be a finitely
generated field of characteristic zero and $\mathcal{R} \subset
\mathcal{K}$ a finitely generated subring. Then there exists an
infinite set $\Pi$ of primes such that for each $p \in \Pi$ there
exists an embedding $\varepsilon_p \colon \mathcal{K}
\hookrightarrow \Q_p$ with the property $\varepsilon_p(\mathcal{R})
\subset \Z_p$.
\end{prop}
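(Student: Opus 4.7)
The plan is a specialization argument: send a transcendence basis of $\mathcal{K}$ to algebraically independent $p$-adic integers and then use Hensel's lemma to extend across the residual algebraic extension. Since any embedding of a finitely generated $\mathcal{R}' \supset \mathcal{R}$ restricts to one of $\mathcal{R}$, I will first enlarge $\mathcal{R}$: pick a transcendence basis $t_1, \ldots, t_d$ of $\mathcal{K}/\Q$ and a primitive element $\alpha$ with $\mathcal{K} = \Q(t_1, \ldots, t_d)(\alpha)$, and assume $\mathcal{R} = \Z[t_1, \ldots, t_d, \alpha, D^{-1}]$ for some nonzero $D \in \Z[t_1, \ldots, t_d, \alpha]$. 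Let $f(T) \in \Z[t_1, \ldots, t_d][T]$ be a polynomial of minimal positive degree with $f(\alpha) = 0$, and let $c, \Delta \in \Z[t_1, \ldots, t_d]$ be its leading coefficient and its discriminant in $T$ (both nonzero, since $f$ is separable of positive degree). Because the hypersurface cut out by $f$ need not be geometrically irreducible, I will choose a number field $L \subset \overline{\Q}$ over which $f$ factors into absolutely irreducible pieces, and fix an absolutely irreducible factor $g \in \mathcal{O}_L[t_1, \ldots, t_d][T]$ with $g(\alpha) = 0$; then $V := V(g) \subset \bA^{d+1}_L$ is geometrically irreducible of dimension $d$.

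By Chebotarev's density theorem, infinitely many rational primes $p$ split completely in $L$, and for each such $p$ I fix an embedding $L \hookrightarrow \Q_p$ carrying $\mathcal{O}_L$ into $\Z_p$. For all but finitely many such $p$ the reduction $V_{\bF_p}$ remains geometrically irreducible of dimension $d$, so the Lang--Weil estimate $|V(\bF_p)| = p^d + O(p^{d-1/2})$ supplies, once $p$ is large, a smooth $\bF_p$-point $\overline{v} = (\bar a_1, \ldots, \bar a_d, \bar b)$ at which none of $c$, $\Delta$, $D$ vanishes. Because $(\partial g/\partial T)(\overline{v}) \neq 0$ in $\bF_p$ (ensured by $\Delta(\bar a) \neq 0$), Hensel's lemma gives, for every $(a_1, \ldots, a_d)$ in a $p$-adic neighborhood $U$ of $(\bar a_1, \ldots, \bar a_d)$ in $\Z_p^d$, a unique $b \in \Z_p$ close to $\bar b$ with $g(a, b) = 0$. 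The subset of $U$ consisting of tuples algebraically dependent over $\Q$ is a countable union of proper algebraic subvarieties of $\bA^d$, hence has Haar measure zero, so I can pick $(a_1, \ldots, a_d) \in U$ algebraically independent over $\Q$ together with its Hensel partner $b \in \Z_p$. The assignment $t_i \mapsto a_i$, $\alpha \mapsto b$ then defines a field embedding $\varepsilon_p \colon \mathcal{K} \hookrightarrow \Q_p$: algebraic independence yields injectivity on $\Q(t_1, \ldots, t_d)$, and $g(a, b) = 0$ allows the extension by $\alpha$. Since $D(a, b) \in \Z_p^{\times}$, one has $\varepsilon_p(\mathcal{R}) \subset \Z_p$, as required. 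Running over the infinite set of primes supplied by Chebotarev, one obtains the infinite set $\Pi$.

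The main obstacle is guaranteeing $\bF_p$-rational (and not merely $\overline{\bF}_p$-rational) points on $V$ for infinitely many $p$, and this is exactly where the passage from $f$ to its absolutely irreducible factor $g$ over a number field $L$ is essential: without geometric irreducibility, $V(\bF_p)$ could conceivably be empty for every $p$, and the whole specialization strategy would collapse. The Lang--Weil estimate combined with Chebotarev's theorem (to arrange $L \hookrightarrow \Q_p$) carries the burden of this step; everything afterwards --- the Hensel lift and the selection of an algebraically independent tuple --- is comparatively routine, since the Hensel-liftable locus is $p$-adically open of positive Haar measure while the algebraically dependent locus is null.
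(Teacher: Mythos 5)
Your proof is correct and is essentially the specialization-and-Hensel argument of \cite{PR-Sub}, \cite{PR-Reg}, to which the present survey defers without reproducing the proof; the care you take with the absolutely irreducible factor $g$, with $c$ and $\Delta$ to make Hensel's lemma applicable, and with $D$ to control the image of $\mathcal{R}$, is all as it should be. One small remark: the appeal to Lang--Weil can be avoided — since $V(g)$ is geometrically irreducible over $L$, its smooth locus contains a $\overline{\Q}$-rational point defined over some number field $E \supset L$, and choosing $p$ to split completely in $E$ (and large enough that this point has $\cO_E$-integral coordinates at $p$ and reduces to a smooth $\bF_p$-point off the divisors of $c$, $\Delta$, $D$) already furnishes the required Hensel-liftable center, after which the Haar-measure selection of an algebraically independent specialization proceeds exactly as you wrote it.
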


\vskip2mm

To sketch a proof of the second assertion of Theorem \ref{T:Gen1},
we fix a maximal $K$-torus $T_0$ of $G$. Given any other maximal
torus $T$ of $G$ defined over an extension $F$ of $K$ there exists
$g \in G(\overline{F})$ such that $T = \iota_g(T_0)$, where
$\iota_g(x) = gxg^{-1}$. Then $\iota_g$ induces an isomorphism
between the Weyl groups $W(G , T_0)$ and $W(G , T)$. A different
choice of $g$ will change this isomorphism by an inner automorphism
of the Weyl group, implying that there is a {\it canonical
bijection} between the sets  $[W(G , T_0)]$ and $[W(G , T)]$ of
conjugacy classes in the respective groups;  we will denote this
bijection by $\iota_{T_0, T}$.

Now, let $v$ be a nontrivial discrete valuation of $K$ such that the
completion $K_v$ is locally compact and splits $T_0$. Using
the Frobenius automorphism of the maximal
unramified extension $K^{\mathrm{ur}}_v$ in conjunction with the
fact that $H^1(K_v , \widetilde{G})$, where $\widetilde{G}$ is the
simply connected cover of $G$, vanishes (cf.\,\cite{BrT},
\cite{Kneser}),\footnote{One can alternatively use the fact that if
we endow $\widetilde{G}$ with the structure of a group scheme over
$\mathcal{O}_v$ as a Chevalley group, then
$H^1(K^{\mathrm{ur}}_v/K_v ,
\widetilde{G}(\mathcal{O}^{\mathrm{ur}}_v))$, where
$K^{\mathrm{ur}}_v$ is the maximal unramified extension of $K_v$
with the valuation ring $\mathcal{O}^{\mathrm{ur}}_v$, vanishes,
which follows from Lang's theorem \cite{Lang} or its generalization
due to Steinberg  \cite{Stein}, see \cite[Theorem 6.8]{PlR}.} one
shows that given a nontrivial conjugacy class $c \in [W(G, T_0)]$,
one can find a maximal $K_v$-torus $T(v , c)$ such that given any
maximal $K_v$-torus $T$ of $G$ that is conjugate to $T(v , c)$ by an
element of $G(K_v)$, for its splitting field ${K_v}_T$ we have
\begin{equation}
\theta_T(\Ga({K_v}_T/K_v)) \cap \iota_{T_0, T}(c) \neq \emptyset.
\end{equation}
Now, if $v_1, \ldots , v_r$ are as in part (1), then using the weak
approximation property of  the variety of maximal tori (cf.\,\cite{PlR},
Corollary 7.3), one can pick a maximal $K$-torus $T_0$ which
splits over $K_{v_i}$ for all $i = 1, \ldots , r$. Let $c_1, \ldots
, c_r$ be the nontrivial conjugacy classes of $W(G , T_0)$. Set $T(v_i)
= T(v_i , c_i)$ for $i = 1, \ldots , r$ in the above notation. Then
it is not difficult to show that the tori $T(v_1), \ldots , T(v_r)$
are as required.

The method described above enables one to construct generic tori
with various additional properties, in particular, having prescribed
local behavior.
\begin{cor}\label{C:Gen1}
{\rm (Corollary 3.2 in \cite{PR-IHES})}. Let $G$
and $K$ be as in Theorem \ref{T:Gen1}, and let $V$ be a finite set
of inequivalent nontrivial rank 1 valuations of $K$. Suppose that
for each $v \in V$ we are given a maximal $K_v$-torus $T(v)$ of $G$.
Then there exists a maximal $K$-torus $T$ of $G$ for which
(\ref{E:Gen1}) holds and which is conjugate to $T(v)$ by an element
of $G(K_v)$, for all $v \in V$.
\end{cor}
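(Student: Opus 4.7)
The plan is to derive this from Theorem \ref{T:Gen1} by augmenting the prescribed local data with auxiliary local tori that force genericity, and then invoking weak approximation on the variety of maximal tori to produce a single global $K$-torus matching all the prescribed local data simultaneously.

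More precisely, I would proceed as follows. First, using Proposition \ref{P:Gen1} (and the reasoning behind Theorem \ref{T:Gen1}(1)), I would pick $r$ nontrivial rank~1 valuations $v_1', \ldots, v_r'$ of $K$, inequivalent to each other and to every $v \in V$, such that each completion $K_{v_i'}$ is locally compact and $G$ splits over $K_{v_i'}$; the content of Proposition \ref{P:Gen1} guarantees an infinite supply of such valuations (one can realize them as $p$-adic completions avoiding the finitely many places in $V$), so the disjointness from $V$ is automatic. Then I would apply Theorem \ref{T:Gen1}(2) to obtain maximal $K_{v_i'}$-tori $T(v_i')$ of $G$ such that any maximal $K$-torus conjugate under $G(K_{v_i'})$ to $T(v_i')$ for each $i = 1, \ldots, r$ is generic.

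At this point I have prescribed local maximal tori $T(w)$ for all $w$ in the finite set $V' := V \cup \{v_1', \ldots, v_r'\}$ of pairwise inequivalent valuations. The variety $\mathfrak{T}_G$ of maximal tori of $G$ is a smooth rational $K$-variety, and hence satisfies weak approximation with respect to any finite set of rank~1 valuations (this is the statement of \cite[Corollary~7.3]{PlR} already invoked in the sketch proof of Theorem \ref{T:Gen1}). Because the map $G \to \mathfrak{T}_G$ given by conjugation of a fixed maximal torus is surjective and smooth, any $K_w$-torus can be moved to a $K_w$-rational point of $\mathfrak{T}_G$ in the $G(K_w)$-orbit of its conjugacy class, and weak approximation then produces a $K$-rational point of $\mathfrak{T}_G$ arbitrarily close, in the $v$-adic topology for each $w \in V'$, to the $K_w$-point corresponding to $T(w)$. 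Translating back, this gives a maximal $K$-torus $T$ of $G$ which is $G(K_w)$-conjugate to $T(w)$ for every $w \in V'$.

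By construction, $T$ is $G(K_{v_i'})$-conjugate to $T(v_i')$ for $i = 1, \ldots, r$, so Theorem \ref{T:Gen1}(2) implies that (\ref{E:Gen1}) holds, i.e.\ $T$ is generic. At the same time, $T$ is $G(K_v)$-conjugate to the prescribed $T(v)$ for each $v \in V$, which is the remaining requirement of the corollary. The main technical point, and the step I expect to be the chief obstacle, is the application of weak approximation: one must be careful to phrase the prescribed local conjugacy classes as actual $K_w$-points of $\mathfrak{T}_G$ and then use the openness of $G(K_w)$-orbits on $\mathfrak{T}_G(K_w)$ (in the $w$-adic topology) to convert ``$K$-point sufficiently close'' into ``$G(K_w)$-conjugate'' to $T(w)$. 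Once this is set up, the rest of the argument is a straightforward combination of Proposition \ref{P:Gen1} and Theorem \ref{T:Gen1}.
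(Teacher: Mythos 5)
Your proposal is correct and matches the paper's approach: the corollary is precisely the statement, flagged just before it, that ``the method described above enables one to construct generic tori\ldots having prescribed local behavior,'' and the intended argument is exactly to enlarge the set of valuations from $V$ to $V \cup \{v_1',\ldots,v_r'\}$ (choosing the auxiliary places via Proposition \ref{P:Gen1} to be disjoint from $V$), impose the genericity-forcing tori $T(v_i')$ of Theorem \ref{T:Gen1}(2) there, and invoke weak approximation on the variety of maximal tori together with openness of $G(K_w)$-orbits to produce the global torus $T$. The one point you rightly flag as requiring care -- converting ``close in the $w$-adic topology'' into ``$G(K_w)$-conjugate'' via the implicit function theorem -- is handled the same way in the paper's sketch of Theorem \ref{T:Gen2}.
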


\vskip2mm

It should be noted that the method of $p$-adic embeddings that we
used in the proof of Theorem \ref{T:Gen1}, and which is based on
Proposition \ref{P:Gen1}, has many other applications -- see
\cite{PR-Chinese}.

\vskip2mm

We are now prepared to discuss {\it generic elements} whose
existence in any finitely generated Zariski-dense subgroup is the
core issue in this section.

\vskip2mm

\noindent {\bf 9.4. Definition.} Let $G$ be a connected semi-simple
algebraic group defined over a field $K$. A regular semi-simple
element $g \in G(K)$ is called {\it generic} (over $K$) if the maximal  torus
$T := Z_G(g)^{\circ}$ is generic (over $K$). We shall refer to $T$ as the torus associated with $g$.

\vskip1mm

Before  proceeding with the discussion of generic elements, we would
like to point out that some authors adopt a slight variant of this
definition by requiring that the extension $K_g$ of $K$ generated by
the eigenvalues of $g$ be ``generic," which is more consistent with
the notion of a ``generic polynomial" in Galois theory. We note that
$K_g \subset K_T$ making the Galois group $\Ga(K_g/K)$ a {\it
quotient} of the group $\Ga(K_T/K)$. Then the requirement that the
order of $\Ga(K_g/K)$ be divisible by $\vert W(G , T) \vert$ (which
is probably one of the most natural ways to express the
``genericity" of $K_g/K$) a priori may not imply the inclusion
(\ref{E:Gen1}), which is most commonly used in applications
(although the former does imply the latter if $G$ is an inner form of a split group\,--\:cf.\,\cite[Lemma 4.1]{PR-IHES}, in particular, if  $G$ is absolutely 
almost simple of type different from $\textsf{A}_{n}$ $(n
> 1)$, $\textsf{D}_{n}$ $(n \geqslant 4)$ and $\textsf{E}_6$).
On the other hand, even for a regular element $g \in T(K)$, where
$T$ is a maximal generic $K$-torus of $G$, the field $K_g$ may be
strictly smaller than $K_T$. (Example: Let $G = \mathrm{PSL}_2$ over
$K$, and let $T$ be a maximal $K$-torus of $G$ of the form
$\mathrm{R}^{(1)}_{L/K}(\mathrm{GL}_1)$ where $L/K$ is a quadratic
extension; then an element $g \in T(K)$ of order two is regular with
$K_g = K$, while $K_T = L$). This problem, however, does not arise
if $G$ is absolutely almost simple, $T$ is generic and $g \in T(K)$ has infinite order. 
Indeed, then the $K$-torus $T$ is {\it irreducible}, i.e., it does not contain any 
proper $K$-subtori since $W(G,T)$ acts irreducibly on $X(T)\otimes_{\Z}\Q$. 
It follows that every element $g \in T(K)$ of
infinite order generates a Zariski-dense subgroup of $T$, hence $K_g
= K_T$, so the order of $\Ga(K_g/K)$ is divisible by
$\vert W(G , T) \vert$. Thus, the above variant of the definition of
a generic element leads to the same concept for elements of infinite
order. Recall that according to a famous result due to Selberg
(cf.\,\cite[Theorem 6.11]{Rag-book}), any finitely generated
subgroup $\Gamma$ of  $G(K)$ contains a torsion-free subgroup
$\Gamma'$ of finite index, which therefore is also Zariski-dense.
So, the following theorem  (Theorem 9.6) that asserts the existence of generic
elements in an arbitrary Zariski-dense subgroup in the sense of our
definition also implies the existence of generic elements in the sense of the other definition. 
\vskip2mm

\noindent{\bf 9.5.}  Let $K$ be a field and $G$ a connected absolutely almost simple $K$-group. Let $g\in G(K)$ be a generic element and let $T = Z_G(g)^{\circ}$. Then $g\in T(K)$ (see \cite{Borel}, Corollary 11.12).  
As $T$ does not contain proper $K$-subtori, the cyclic group generated by any $t\, (\in T(K))$ of infinite order is Zariski-dense in $T$.  So $Z_G(t)=Z_G(T)=T$; which implies that $t$ is generic (over $K$) and 
$Z_G(t)$ is connected; in particular, if $g$ is of infinite order, then $Z_G(g) \,(=T)$ is connected.  If $n\in G(K)$ is such that $ntn^{-1}$ commutes with $t$, then $n$ normalizes $T$, i.e., $n$ lies in the normalizer 
$N_G(T)(K)$ of $T$ in $G(K)$. If $x$ is an element of $G(K)$ of infinite order such that for some nonzero integer $a$, $t :=x^a$ lies in $T(K)$, then 
as $x$ commutes with $t$, it commutes with $T$ and hence it lies in $T(K)$. 
\vskip1mm
It is also clear that the tori associated to two generic elements are equal if and only if the elements commute.
\vskip1mm

It is easily seen that the natural action of  $N_G(T)(K)$ on the character group $X(T)$  commutes  
with the natural action of \,${\mathrm{Gal}}(K_T/K)$. Now since  $T$ is generic, $\theta_T({\mathrm{Gal}}(K_T/K))$ $\supset W(G,T)$, and as  $W(G,T)$ 
acts irreducibly on $X(T)\otimes_{\Z}\C$, we see that the elements of $N_G(T)(K)$ act by $\pm I$ on $X(T)$. Therefore,  for $n\in N_G(T)(K)$, $n^2$ commutes 
with $T$ and hence lies in $T(K)$. Now if $n\in G(K)$ is such that $ngn^{-1}$ commutes with $g$, then $n$ belongs to $N_G(T)(K)$ and $n^2\in T(K)$.   
So, if moreover $n$ is of infinite order, then it actually lies in $T(K)$.  Thus an element of $G(K)$ 
of infinite order which does not belong to $T(K)$ cannot normalize $T$. 

\addtocounter{thm}{2}

\begin{thm}\label{T:Gen2}
Let $G$ be a connected absolutely almost simple algebraic group over
a finitely generated field $K$, and let $\Gamma$ be a
finitely generated Zariski-dense subgroup of $G(K)$. Then $\Gamma$ contains a
generic element (over $K$) of infinite order.
\end{thm}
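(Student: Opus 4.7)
The plan is to combine Theorem \ref{T:Gen1} with a local--global approximation argument at the valuations provided there, so that a suitably chosen element $g\in\Gamma$ automatically has a generic centralizer torus.

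First, invoke Theorem \ref{T:Gen1} to choose $r$ inequivalent nontrivial discrete valuations $v_1,\ldots,v_r$ of $K$ for which the completions $K_{v_i}$ are locally compact and $G$ splits, together with the maximal $K_{v_i}$-tori $T(v_i)$ from part (2). For each $i$, set
$$
U_i \;:=\; \{\, g\in G(K_{v_i}) \;:\; g\text{ is regular semi-simple and } Z_G(g)^{\circ}\text{ is } G(K_{v_i})\text{-conjugate to } T(v_i)\,\}.
$$
The regular semi-simple condition is open, and the assignment $g\mapsto Z_G(g)^{\circ}$ is continuous into the variety of maximal tori, so each $U_i$ is open in the $v_i$-adic topology on $G(K_{v_i})$; it is nonempty because it contains all regular elements of $T(v_i)(K_{v_i})$. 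Any $g\in\Gamma$ whose image in $G(K_{v_i})$ lies in $U_i$ for every $i$ will, by Theorem \ref{T:Gen1}(2), have centralizer torus $T:=Z_G(g)^{\circ}$ generic over $K$, hence will itself be generic in the sense of Definition 9.4.

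The core step, and the main obstacle, is to produce such a $g$ inside $\Gamma$. Embed $\Gamma$ diagonally in $\mathcal{H}:=\prod_{i=1}^{r} G(K_{v_i})$ via the completions $K\hookrightarrow K_{v_i}$. What is needed is that the closure $\overline{\Gamma}$ in $\mathcal{H}$ (in the product analytic topology) has nonempty interior---equivalently, contains an open subgroup of $\mathcal{H}$---so that $\overline{\Gamma}\cap(U_1\times\cdots\times U_r)$, and hence $\Gamma$ itself, meets $U_1\times\cdots\times U_r$. This is a form of simultaneous strong approximation for finitely generated Zariski-dense subgroups. Using Proposition \ref{P:Gen1}, one may arrange the $v_i$ to come from embeddings $\varepsilon_{p_i}\colon K\hookrightarrow\bQ_{p_i}$ sending a fixed finitely generated subring of $K$ that contains the entries of the generators of $\Gamma$ into $\Z_{p_i}$. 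The required density of $\Gamma$ in $\mathcal{H}$ is then proved by analyzing the reductions of $\Gamma$ modulo the maximal ideals at sufficiently many good primes, in the spirit of the work of Nori, Weisfeiler, and Matthews--Vaserstein--Weisfeiler, and combined when necessary with Corollary \ref{C:Gen1} to adjust the local shapes of the tori. This is precisely where the finitely-generated and Zariski-dense hypotheses on $\Gamma$ are used in an essential way.

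Granted $g_0\in\Gamma\cap(U_1\times\cdots\times U_r)$, the torus $T_0:=Z_G(g_0)^{\circ}$ is generic. It remains only to ensure that $g_0$ has infinite order, which is routine: by Selberg's lemma $\Gamma$ contains a torsion-free subgroup $\Gamma'$ of finite index, and $\Gamma'$ is again finitely generated and Zariski-dense in $G$, so one may replace $\Gamma$ by $\Gamma'$ at the outset and extract from it a generic element of infinite order. Alternatively, the torsion elements of $\Gamma$ satisfy $g^N=1$ for some fixed $N$ (say the exponent of the finite quotient $\Gamma/\Gamma'$), and each such relation cuts out a proper Zariski-closed subset of $G$; these proper subsets cannot cover the nonempty $v_i$-adic open set $U_1\times\cdots\times U_r$, so an element of infinite order in $\Gamma$ meeting this open set exists, completing the proof.
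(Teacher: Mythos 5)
Your proposal follows essentially the same route as the paper's own proof of Theorem~\ref{T:Gen2}: reduce to valuations $v_i$ furnished by Proposition~\ref{P:Gen1} and Theorem~\ref{T:Gen1}, show that the set of elements of $G(K_{v_i})$ conjugate into the regular locus of $T(v_i)$ is open, invoke the openness of the closure of $\Gamma$ in $\prod_i G(K_{v_i})$ (the paper cites \cite{Rap-SA}, you invoke Nori/Weisfeiler/MVW), and intersect. The treatment of the infinite-order requirement differs slightly (you pass to a torsion-free finite-index subgroup via Selberg, or argue via a proper Zariski-closed torsion locus; the paper instead chooses the open subgroup $\prod U_i \subset \overline{\Gamma}$ to be torsion-free), but this is cosmetic.

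There is, however, one step you assert without justification, and it is precisely the step the paper takes care to make explicit. You write that since $\overline{\Gamma}$ contains an open subgroup of $\mathcal{H}$ and each $U_i$ is nonempty and open, $\overline{\Gamma}$ meets $U_1\times\cdots\times U_r$. That inference is not automatic: two nonempty open subsets of a nondiscrete locally compact group need not intersect, and indeed the identity lies in $\overline{\Gamma}$ but not in any $U_i$ (the identity is not regular). What is actually needed is that each $\Omega_i = U_i$ \emph{intersects every open subgroup} of $G(K_{v_i})$; this holds because the regular locus $\Sigma_i(K_{v_i})$ is $v_i$-adically dense in $T(v_i)(K_{v_i})$ (being Zariski-open dense in a smooth variety over a local field), so it accumulates at $1$, and hence $\Omega_i$ meets any neighborhood of the identity. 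The paper states exactly this property of $\Omega_i$; you should add a sentence to that effect. Your passing remark that Corollary~\ref{C:Gen1} might be ``combined when necessary'' into the approximation argument is also a bit off target: that corollary produces generic $K$-tori with prescribed local behavior and plays no role in establishing density of $\Gamma$ in $\mathcal{H}$.
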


(It is not difficult to show, e.g.\:using Burnside's characterization
of absolutely irreducible linear groups, that any Zariski-dense
subgroup $\Gamma \subset G(K)$ contains a {\it finitely generated}
Zariski-dense subgroup, so the assumption in the theorem that
$\Gamma$ be finitely generated can actually be omitted.)

\vskip2mm

\noindent {\it Sketch of the proof.} Fix a matrix $K$-realization $G
\subset \mathrm{GL}_N$, and pick a finitely generated subring $R
\subset K$ so that $\Gamma \subset G(R) := G \cap \mathrm{GL}_N(R)$.
Let $r$ be the number of nontrivial conjugacy classes in the Weyl
group $W(G , T)$. Using Proposition \ref{P:Gen1}, we can find $r$
distinct primes $p_1, \ldots , p_r$ such that for each $i \leqslant
r$ there exists an embedding $\varepsilon_i \colon K \hookrightarrow
\Q_{p_i}$  such that $\varepsilon_i(R) \subset \Z_{p_i}$ and $G$
splits over $\Q_{p_i}$. Let $v_i$ be the discrete valuation of $K$
obtained as the pullback of the $p_i$-adic valuation of $\Q_{p_i}$
so that $K_{v_i} = \Q_{p_i}$. Pick maximal $K_{v_i}$-tori $T(v_1),
\ldots , T(v_r)$ as in part (2) of Theorem \ref{T:Gen1}. Let
$\Sigma_i$ be the Zariski-open $K_{v_i}$-subvariety  of regular
elements in $T(v_i)$. It follows from the Implicit Function Theorem
that the image $\Omega_i$ of the map $$G(K_{v_i}) \times
\Sigma_i(K_{v_i}) \to G(K_{v_i}), \ \ (g , t) \mapsto gtg^{-1},$$ is
open in $G(K_{v_i})$ and intersects every open subgroup of the
latter. On the other hand, as explained in \cite[\S 3]{Rap-SA},
the closure of the image of the diagonal embedding $\Gamma
\hookrightarrow \prod_{i = 1}^r G(K_{v_i})$ is open, hence contains
some $U = \prod_{i = 1}^r U_i$ where $U_i \subset G(K_{v_i})$ is a
Zariski-open torsion-free subgroup. Then $$U_0 := \prod_{i = 1}^r
(U_i \cap \Omega_i)$$ is an open set that intersects $\Gamma$, and
it follows from our construction that any element $g \in \Gamma \cap
U_0$ is a generic element of infinite order.
\vskip2mm

Basically, our proof  shows that given a finitely generated
Zariski-dense subgroup $\Gamma$ of  $G(K)$, one can produce a
finite system of congruences (defined in terms of suitable
valuations of $K$) such that the set of elements $\gamma \in \Gamma$
satisfying this system of congruences consists entirely of generic
elements (and additionally this set is in fact a coset of a finite
index subgroup in $\Gamma$, in particular, it is Zariski-dense in
$G$). Recently, Jouve-Kowalski-Zywina \cite{JKZ}, Gorodnik-Nevo
\cite{GorNe} and Lubotzky-Rosenzweig \cite{LubRo} developed
different {\it quantitative} ways of showing that generic elements
exist in abundance (in fact, these results demonstrate  that ``most"
elements in $\Gamma$ are generic). More precisely, the result of
\cite{GorNe} gives the asymptotics of the number of generic elements
of a given height in an arithmetic group, while the results of
\cite{LubRo}, generalizing earlier results of \cite{JKZ},  are
formulated in terms of random walks on groups and apply to arbitrary
Zariski-dense subgroups in not necessarily connected semi-simple
groups. These papers introduce several new ideas and techniques, but
at the same time employ the elements of the argument from
\cite{PR-Reg} we outlined above.

\vskip2mm

The proofs of the results in \cite{PR-IHES}, \cite{PR-Fields} use
not only Theorem \ref{T:Gen2} itself but also its different
variants that provide generic elements with additional properties,
e.g. having  prescribed local behavior (cf.\,Corollary \ref{C:Gen1}).
We refer the interested reader to these papers for precise
formulations (which are rather technical), and will only indicate
here the basic ``multidimensional" version of Theorem \ref{T:Gen2}
that was developed in \cite{PR-Fields}.
\begin{thm}\label{T:Gen3}
{\rm (cf.\,\cite[Theorem 3.4]{PR-Fields})} Let $G$, $K$  and $\Gamma
\subset G(K)$ be as in Theorem \ref{T:Gen2}. Then for any $m
\geqslant 1$ one can find generic semisimple elements $\gamma_1,
\ldots , \gamma_m \in \Gamma$ of infinite order that are
multiplicatively independent.
\end{thm}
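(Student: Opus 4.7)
\noindent\emph{Proof plan.} The strategy is to apply (the natural semi-simple extension of) Theorem~\ref{T:Gen2} to the product group $H := G^m$ and the subgroup $\Gamma^{(m)} := \Gamma \times \cdots \times \Gamma \subset H(K)$; a single generic element of infinite order in $\Gamma^{(m)}$ automatically supplies the desired multiplicatively independent $m$-tuple in $\Gamma$. By Selberg's lemma, replace $\Gamma$ first by a torsion-free subgroup of finite index, which remains finitely generated and Zariski-dense in $G$. Then $\Gamma^{(m)}$ is finitely generated, torsion-free, and Zariski-dense in the semi-simple group $H$.

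Applying the semi-simple version of Theorem~\ref{T:Gen2}, one produces a generic semi-simple element $\vec{\gamma} = (\gamma_1, \ldots, \gamma_m) \in \Gamma^{(m)}$ of infinite order, whose associated torus canonically decomposes as $T = T_1 \times \cdots \times T_m$ with $T_i := Z_G(\gamma_i)^{\circ}$, and whose genericity in $H$ reads $\theta_T(\Ga(K_T/K)) \supset W(H, T) = \prod_{i=1}^{m} W(G, T_i)$. Regularity of $\vec\gamma$ in $H$ forces each $\gamma_i$ to be regular in $G$ (from $\dim Z_H(\vec\gamma) = \sum_i \dim Z_G(\gamma_i) \geqslant m \cdot \rk(G)$ with equality only if each $\gamma_i$ is regular), hence nontrivial, hence of infinite order since $\Gamma$ is torsion-free. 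Restricting the genericity condition to each summand $X(T_i)$ shows each $T_i$ is generic in $G$, so every $\gamma_i$ is a generic element of $G$ in the sense of Definition~9.4.

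Because each $T_i$ is irreducible and $\gamma_i$ has infinite order, paragraph~9.5 gives that $\gamma_i$ generates a Zariski-dense subgroup of $T_i$, and in this setting multiplicative independence of $\gamma_1, \ldots, \gamma_m$ is equivalent to $\vec{\gamma}$ generating a Zariski-dense subgroup of $T$. To verify the latter, I would classify Galois-stable saturated sublattices $L \subset X_*(T) = \bigoplus_i X_*(T_i)$: given $v = (v_1, \ldots, v_m) \in L$ with $v_j \neq 0$, genericity $\prod W(G, T_i) \subset \theta_T(\Ga(K_T/K))$ yields Galois elements acting as an arbitrary $w \in W(G, T_j)$ on the $j$-th summand and as the identity on the others, so $\{w v_j - v_j : w \in W(G, T_j)\} \subset L \cap X_*(T_j)$. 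This set $\Q$-spans $X_*(T_j) \otimes \Q$ by irreducibility of the Weyl representation for absolutely almost simple $G$, which by saturation forces $L \cap X_*(T_j) = X_*(T_j)$. Iterating gives $L = \bigoplus_i (L \cap X_*(T_i))$ with each summand equal to $\{0\}$ or $X_*(T_i)$, so the connected $K$-subtori of $T$ are exactly the coordinate products $\prod_i S_i$ with $S_i \in \{1, T_i\}$. Since the identity component of $\overline{\langle \vec{\gamma} \rangle}$ projects onto each $T_i$ (the projection contains $\gamma_i$, which generates a dense subgroup), that identity component must be all of $T$, completing the argument.

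\emph{Main obstacle.} The chief technical point is the semi-simple extension of Theorem~\ref{T:Gen2} invoked above, which in turn depends on the semi-simple version of Theorem~\ref{T:Gen1}(2) remarked on immediately after its statement. For $H = G^m$ one produces the requisite local generic tori componentwise, applying the Frobenius plus unramified-extension construction from the sketch of Theorem~\ref{T:Gen1}(2) to each factor of $H$ in order to realize an arbitrary conjugacy class in $W(H) \cong W(G)^m$. Once these tori are available at the primes supplied by Proposition~\ref{P:Gen1}, the remainder of the proof of Theorem~\ref{T:Gen2} --- $p$-adic density of $\Gamma^{(m)}$ in $\prod_i H(K_{v_i})$ and intersection with a torsion-free open set of regular elements --- carries over verbatim and delivers $\vec\gamma$.
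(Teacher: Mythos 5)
Your proof is correct in its main logic, and the reduction to $G^m$ is a clean way to package the argument. The survey itself gives no proof of Theorem~\ref{T:Gen3} (it only cites \cite[Theorem 3.4]{PR-Fields}, calling it the ``multidimensional version'' of Theorem~\ref{T:Gen2}), so a direct comparison is not possible, but the mechanism you use --- running the $p$-adic construction once for $H=G^m$, $\Gamma^m$, instead of iterating it --- is exactly the ``multidimensional'' idea the name suggests, and the individual steps hold up under scrutiny.

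To spell out what you got right, since several places need care: (a) regularity of $\vec\gamma$ in $H=G^m$ does force each $\gamma_i$ to be regular in $G$ via the dimension count $\dim Z_H(\vec\gamma)=\sum_i\dim Z_G(\gamma_i)$; (b) genericity of $T=T_1\times\cdots\times T_m$ in $H$ does descend to each $T_j$, because the Galois action on $X(T)$ preserves each summand $X(T_j)$ and the $j$-th projection of $\theta_T\bigl(\Ga(K_T/K)\bigr)$ equals $\theta_{T_j}\bigl(\Ga(K_{T_j}/K)\bigr)$, which therefore contains $W(G,T_j)$; (c) the classification of $\Ga(K_T/K)$-stable saturated sublattices of $X_*(T)$ as the coordinate subsums $\bigoplus_{j\in J}X_*(T_j)$ is correct, using that $\prod_i W(G,T_i)$ lies in the image of Galois, that each $X_*(T_i)\otimes\Q$ is $W(G,T_i)$-irreducible for $G$ absolutely almost simple, and that saturation in $X_*(T)$ implies saturation in each $X_*(T_j)$; (d) the equivalence between multiplicative independence of $\gamma_1,\dots,\gamma_m$ and Zariski-density of $\langle\vec\gamma\rangle$ in $T$ is right, as is the final step showing the identity component of $\overline{\langle\vec\gamma\rangle}$ projects onto each $T_i$.

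The only place where your write-up is thinner than I would want is the claimed ``semi-simple version'' of Theorems~\ref{T:Gen1}(2) and~\ref{T:Gen2}. Your remark about producing local tori componentwise is correct as far as it goes, but you should note explicitly that the conjugacy classes of $W(H)\cong W(G)^m$ are products $c_1\times\cdots\times c_m$ of conjugacy classes of $W(G)$, that the Frobenius of the compositum $K_{v,T}=K_{v,T_1}\cdots K_{v,T_m}$ restricts to the Frobenius of each $K_{v,T_j}$ (so the componentwise construction really does realize the product class), and that Jordan's theorem is then applied to $W(H)$ with all $(r+1)^m-1$ nontrivial classes, requiring that many primes from Proposition~\ref{P:Gen1}. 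Likewise the openness of the closure of $\Gamma^m$ in $\prod_j H(K_{v_j})$ should be observed to reduce, after permuting factors, to $\overline{\Gamma}^m$ being open in $\bigl(\prod_j G(K_{v_j})\bigr)^m$, which is the absolutely-almost-simple case already used. With these points made explicit the argument is complete.
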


Finally, we would like to formulate a result that enables one to
pass from the weak commensurability of two generic semi-simple
elements to an isogeny, and in most cases even to an isomorphism, of
the ambient tori. This result relates the analysis of weak
commensurability to the problem of characterizing algebraic group
having the same isomorphism/isogeny classes of maximal tori.
\begin{thm}\label{T:Gen4}
{\rm (Isogeny Theorem, \cite[Theorem 4.2]{PR-IHES})} Let $G_1$ and
$G_2$ be two connected absolutely almost simple algebraic groups
defined over an infinite field $K$, and let $L_i$ be the minimal
Galois extension of $K$ over which $G_i$ becomes an inner form of a
split group. Suppose that for $i = 1, 2$, we are given a semi-simple
element $\gamma_i \in G_i(K)$ contained in a maximal $K$-torus $T_i$
of $G_i$. Assume that (i) $G_1$ and $G_2$ are either of the same
Killing-Cartan type, or one of them is of type $\textsf{B}_{n}$ and
the other is of type $\textsf{C}_{n}$, (ii) $\gamma_1$ has infinite
order, (iii) $T_1$ is $K$-irreducible, and (iv) $\gamma_1$ and
$\gamma_2$ are weakly commensurable. Then

\vskip2mm

\noindent {\rm (1)} \parbox[t]{13.5cm}{there exists a $K$-isogeny
$\pi \colon T_2 \to T_1$ which carries $\gamma^{m_2}_2$ to
$\gamma^{m_1}_1$ for some integers $m_1 , m_2 \geqslant 1$;}

\vskip1mm

\noindent {\rm (2)} \parbox[t]{13.5cm}{if $L_1 = L_2 =: L$ and
$\theta_{T_1}(\Ga(L_{T_1}/L)) \supset W(G_1 , T_1)$, then $\pi^*
\colon X(T_1)\otimes_{\Z}\Q \to X(T_2) \otimes_{\Z} \Q$ has the
property that $\pi^*(\Q \cdot \Phi(G_1 , T_1)) = \Q \cdot \Phi(G_2 ,
T_2)$. Moreover, if $G_1$ and $G_2$ are of the same Killing-Cartan
type different from $\textsf{B}_2 = \textsf{C}_2$, $\textsf{F}_4$ or
$\textsf{G}_2$, then a suitable rational multiple of $\pi^*$  maps
$\Phi(G_1 , T_1)$ onto $\Phi(G_2 , T_2)$, and if $G_1$ is of type
$\textsf{B}_{n}$ and $G_2$ is of type $\textsf{C}_{n}$, with $n >
2$, then a suitable rational multiple $\lambda$ of $\pi^*$ takes the
long roots in $\Phi(G_1 , T_1)$ to the short roots in $\Phi(G_2 ,
T_2)$ while $2\lambda$ takes the short roots in $\Phi(G_1 , T_1)$ to
the long roots in $\Phi(G_2 , T_2)$.}
\end{thm}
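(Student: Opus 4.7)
The strategy is to promote the single numerical identity provided by weak commensurability into a full Galois-equivariant $\Q$-isomorphism between the character lattices $X(T_1)\otimes\Q$ and $X(T_2)\otimes\Q$, exploiting the $K$-irreducibility of $T_1$. Dualizing then produces a $K$-isogeny $\pi\colon T_2\to T_1$ with the required compatibility with $\gamma_1$ and $\gamma_2$. The refinement (2) is obtained by tracking Weyl group actions through $\pi^*$.

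For part (1), the key observation is that since $\gamma_1\in G_1(K)$ has infinite order and $T_1$ is $K$-irreducible, the Zariski closure of $\langle\gamma_1\rangle$ is a nontrivial $K$-subtorus of $T_1$, hence equals $T_1$ itself. Consequently the evaluation map $\varepsilon_1\colon X(T_1)\to\overline{K}^{\times}$, $\chi\mapsto\chi(\gamma_1)$, is injective, and it is $\Ga(\overline{K}/K)$-equivariant because $\gamma_1$ is fixed by Galois. Set $H_1=\im \varepsilon_1$, a Galois-stable subgroup of $\overline{K}^{\times}$ isomorphic as a Galois module to $X(T_1)$. Define
$$
B_2 \;=\; \{\chi\in X(T_2)\colon \chi(\gamma_2)\in H_1\},
$$
which is a Galois-stable subgroup of $X(T_2)$ containing, by weak commensurability, a nontrivial character. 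Evaluation at $\gamma_2$ realizes $B_2$ as a Galois-equivariant subgroup of $H_1\cong X(T_1)$ via an explicit map $B_2\to A_1\subset X(T_1)$, where $A_1$ is its image. Since $T_1$ is $K$-irreducible, $X(T_1)\otimes\Q$ is an irreducible $\Ga(\overline{K}/K)$-module, so the nonzero Galois-stable subgroup $A_1$ has full rank $n$. A rank count then forces $B_2$ to have rank $n$ in $X(T_2)$ and the map $B_2\to A_1$ to be injective; tensoring with $\Q$ yields a Galois-equivariant $\Q$-isomorphism $B_2\otimes\Q=X(T_2)\otimes\Q\to A_1\otimes\Q=X(T_1)\otimes\Q$. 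Inverting and clearing denominators gives a Galois-equivariant homomorphism $\pi^*\colon X(T_1)\to X(T_2)$ of finite cokernel, which corresponds to a $K$-isogeny $\pi\colon T_2\to T_1$. Tracing back through the construction, $\pi^*$ satisfies $\pi^*(\chi_1)(\gamma_2)=\chi_1(\gamma_1)^N$ for a suitable positive integer $N$ and every $\chi_1\in X(T_1)$; this translates to $\chi_1(\pi(\gamma_2))=\chi_1(\gamma_1^N)$ for all characters, which (after possibly raising $\gamma_2$ to a power to absorb the finite kernel of the isogeny) gives $\pi(\gamma_2^{m_2})=\gamma_1^{m_1}$.

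For part (2), the hypothesis $\theta_{T_1}(\Ga(L_{T_1}/L))\supset W(G_1,T_1)$ means that over $L$ the Galois action on $X(T_1)\otimes\Q$ contains the whole Weyl group, while the fact that over $L$ the group $G_2$ is already an inner form of a split group forces $\theta_{T_2}(\Ga(L_{T_2}/L))\subset W(G_2,T_2)$. The Galois equivariance of $\pi^*$ conjugates the action of $\Ga(L_{T_1}/L)$ on $X(T_1)\otimes\Q$ to a subgroup of $\mathrm{Aut}(X(T_2)\otimes\Q)$ of order $|W(G_1,T_1)|=|W(G_2,T_2)|$ lying inside $W(G_2,T_2)$, hence equal to it. Thus $\pi^*$ intertwines $W(G_1,T_1)$ and $W(G_2,T_2)$, and in particular sends the Weyl-invariant subspace $\Q\cdot\Phi(G_1,T_1)$ onto $\Q\cdot\Phi(G_2,T_2)$. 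For the sharper claim that a rational multiple of $\pi^*$ carries roots to roots, I would fix a single root $\alpha\in\Phi(G_1,T_1)$, rescale $\pi^*$ so that its image is a root in $\Phi(G_2,T_2)$, and propagate this along the Weyl orbit using the intertwining property. When the root system has a single root length, i.e.\ outside $\textsf{B}_n$, $\textsf{C}_n$, $\textsf{F}_4$, $\textsf{G}_2$, the Weyl group acts transitively on roots and this is immediate. In type $\textsf{B}_n$ versus $\textsf{C}_n$ with $n\geq 3$, the Weyl orbit on roots splits into long and short pieces, and the two scalars required to match long-to-short and short-to-long differ by the factor of $2$ dictated by the Cartan integer $\langle\alpha_{\mathrm{short}},\alpha_{\mathrm{long}}^{\vee}\rangle=\pm 2$ in these root systems.

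The main obstacle I anticipate lies in part (2): pinning down the sharp factor-of-$2$ relationship between the two scalings in the $\textsf{B}_n$--$\textsf{C}_n$ case, and explaining why the exceptional types $\textsf{B}_2=\textsf{C}_2$, $\textsf{F}_4$, $\textsf{G}_2$ must be excluded from the clean ``roots-to-roots'' conclusion. For those types the automorphism group of the root system strictly contains the Weyl group (via the nontrivial diagram automorphisms interchanging long and short roots), so a merely Weyl-equivariant $\Q$-isomorphism $\pi^*$ need not preserve root lengths up to a single global scalar; teasing apart what is forced and what is genuinely ambiguous requires careful Lie-theoretic bookkeeping of the root-length normalization in each case.
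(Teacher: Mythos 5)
Your approach matches the paper's (PR-IHES, Theorem 4.2): Zariski-density of $\langle\gamma_1\rangle$ in the $K$-irreducible torus $T_1$ makes evaluation at $\gamma_1$ a Galois-equivariant injection $X(T_1)\hookrightarrow\overline{K}^{\times}$, irreducibility of $X(T_1)\otimes\Q$ forces the single weak-commensurability datum to propagate to a full-rank Galois-equivariant lattice correspondence, and dualizing yields the $K$-isogeny; part (2) is then extracted by intertwining the Weyl groups through $\pi^*$. The ``obstacle'' you flag in part (2) is not a real gap but just bookkeeping you have essentially already set up: since every reflection in a Weyl group is a root reflection and $W$ acts transitively on roots of a fixed length, $\pi^*$ sends each root to a rational multiple of a root with a scalar constant on each length-orbit, and the Cartan-integer computation you indicate forces the single-scalar conclusion when the (common) type has a trivial long/short-swapping outer automorphism of the root system (excluding $\textsf{B}_2{=}\textsf{C}_2$, $\textsf{F}_4$, $\textsf{G}_2$) and gives the $\lambda$, $2\lambda$ dichotomy in the $\textsf{B}_n$--$\textsf{C}_n$ case because the orbit sizes force long${}\leftrightarrow{}$short.
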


It follows that in the situations where $\pi^*$ can be, and has
been, scaled so that $\pi^*(\Phi(G_1 , T_1)) = \Phi(G_2 , T_2)$, it
induces $K$-isomorphisms $\widetilde{\pi} \colon \widetilde{T}_2 \to
\widetilde{T}_1$ and $\overline{\pi} \colon \overline{T}_2 \to \overline{T}_1$ between
the corresponding tori in the simply connected and adjoint groups
$\widetilde{G}_i$ and $\overline{G}_i$, respectively, that extend to
$\overline{K}$-isomorphisms $\widetilde{G}_2 \to \widetilde{G}_1$ and
$\overline{G}_2 \to \overline{G}_1$. Thus, the fact that Zariski-dense
torsion-free subgroups $\Gamma_1 \subset G_1(K)$ and $\Gamma_2
\subset G_2(K)$ are weakly commensurable implies (under some minor
technical assumptions) that $G_1$ and $G_2$ have the same
$K$-isogeny classes (and under some additional assumptions, even the
same $K$-isomorphism classes) of generic maximal  $K$-tori that
nontrivially intersect $\Gamma_1$ and $\Gamma_2$, respectively.

\vskip2mm

For a ``multidimensional" version of Theorem \ref{T:Gen4}, which is
formulated using the notion of weak containment (see \S
\ref{S:Fields}) in place of weak commensurability, see
\cite[Theorem 2.3]{PR-Fields}.
\vskip3mm

{\bf 9.9.} We would like to conclude this section with one new
observation (Theorem 9.10) which is directly related to the main theme of the
workshop -- thin groups. This observation was inspired by the conversations of  the first-named author with Igor Rivin at the Institute for Advanced Study.

Let $G$ be a connected absolutely almost simple
algebraic group over a field $K$ of characteristic zero, and let $T$ be a maximal $K$-torus of $G$. We
let $\Phi_{>}(G , T)$ (resp., $\Phi_{<}(G ,
T)$) denote the set of all long (resp., short) roots in the root
system $\Phi(G , T)$; by convention,
$$
\Phi_{>}(G , T) = \Phi_{<}(G , T) = \Phi(G
, T)$$
if all roots have the same length. Furthermore, we let
$G^{>}_T$ denote the $K$-subgroup of $G$ generated by $T$
and the 1-parameter unipotent subgroups $U_{a}$ for $a \in
\Phi_{>}(G , T)$. Then $G^{>}_T$ is a
connected semi-simple subgroup of $G$  of maximal absolute rank (so, in fact, just the
$U_{a}$'s for $a \in \Phi_{>}(G , T)$ generate $G^{>}_T$). By direct inspection, one verifies
that $G^{>}_T \neq G$ precisely when $\Phi(G , T)$ has
roots of different lengths, and then $G^{>}_T$ is a
semi-simple group of type $(\textsf{A}_1)^n$ if $G$ is of type
$\textsf{C}_n$, and an absolutely almost simple group of type
$\textsf{D}_n$, $\textsf{D}_4$ and $\textsf{A}_2$ if $G$ is of type
$\textsf{B}_n$, $\textsf{F}_4$ and $\textsf{G}_2$, respectively. 
On the other hand, the subgroups $U_a$ for $a \in
\Phi_{<}(G , T)$ generate $G$ in all cases. Finally,
for any connected subgroup of $G$ containing $T$ there exists a
subset $\Psi \subset \Phi(G , T)$ such that $G$ is generated by $T$
and $U_a$ for all $a \in \Psi$. \addtocounter{thm}{1}
\begin{thm}\label{T:thin}
Let $g$ be a generic element of infinite order and  $T:=Z_G(g)$ be the associated maximal torus. 
Let $x\in G(K)$ be any element of infinite order not contained in $T(K)$. 
Furthermore, let $\Gamma$ be the (abstract) subgroup of $G(K)$
generated by $g$ and $x$, and let $H$ be the identity component of the Zariski-closure of $\Gamma$. 
Then either $H = G$ or $H
= G^{>}_T$. Consequently,
$g$ and $x$ generate a Zariski-dense subgroup
of $G$ if all roots in the root system $\Phi(G,T)$ are of same length.
\end{thm}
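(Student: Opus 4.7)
The plan is to analyze $\Psi:=\Phi(H,T)$ and invoke genericity of $T$ to force $W(G,T)$-invariance of $\Psi$; the classification of Weyl orbits on $\Phi(G,T)$ then yields the dichotomy.

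First I would show $T\subset H$: since $g$ has infinite order and $T$ is $K$-irreducible (having no proper $K$-subtori, by genericity; cf.\,9.5), the cyclic subgroup $\langle g\rangle$ is Zariski-dense in $T$. Hence $T$ lies in the Zariski closure of $\Gamma$, and being connected, in its identity component $H$. In particular, $T$ is a maximal torus of $H$, so $\Psi:=\Phi(H,T)$ is well-defined as a subset of $\Phi(G,T)$.

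Next I would verify $H\supsetneq T$. If $H=T$, then $\overline{\Gamma}/T$ is finite, so $x^k\in T(K)$ for some $k\geqslant 1$. But the observation in 9.5 shows that an element of $G(K)$ of infinite order with a nonzero power in $T(K)$ must itself lie in $T(K)$, contradicting the hypothesis $x\notin T(K)$. Consequently $\Psi$ is nonempty: the Lie algebra $\mathfrak{h}$ is $T$-stable and therefore a direct sum of $\mathfrak{t}$ and root spaces $\mathfrak{g}_a$ for $a\in\Psi$, and it strictly contains $\mathfrak{t}$.

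The crucial step is showing $\Psi$ is $W(G,T)$-invariant. Since $\Gamma\subset G(K)$, the subgroup $H$ is defined over $K$, so $\mathrm{Lie}(H)\otimes_K\overline{K}$ is stable under $\Ga(\overline{K}/K)$. As this Galois action permutes the root spaces $\mathfrak{g}_a$ according to its action on $X(T)$ (which factors through $\Ga(K_T/K)$), $\Psi$ is stable under $\theta_T(\Ga(K_T/K))$. By genericity of $T$, $\theta_T(\Ga(K_T/K))\supset W(G,T)$, so $\Psi$ is a $W(G,T)$-invariant nonempty subset of $\Phi(G,T)$.

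Finally, the nonempty $W(G,T)$-stable subsets of $\Phi(G,T)$ are unions of its two Weyl orbits $\Phi_>(G,T)$ and $\Phi_<(G,T)$ (which coincide in the simply-laced case), so $\Psi$ must equal one of $\Phi_>(G,T)$, $\Phi_<(G,T)$, or $\Phi(G,T)$. Using the fact recalled just before the theorem that the subgroups $U_a$ for $a\in\Phi_<(G,T)$ already generate $G$, the second and third cases force $H=G$, while $\Psi=\Phi_>(G,T)$ gives $H=G^{>}_T$ by construction. In the simply-laced case $\Phi_>=\Phi_<$ leaves only $H=G$, whence $\overline{\Gamma}=G$ and $\Gamma$ is Zariski-dense. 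The main and essentially the only nontrivial step is the transfer from Galois-stability of $\Psi$ to $W(G,T)$-stability, which is immediate from the very definition of a generic torus.
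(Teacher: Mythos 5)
Your proof is correct and follows essentially the same route as the paper: show $T \subsetneq H$, then use genericity of $T$ to promote Galois-stability of $\Phi(H,T)$ to $W(G,T)$-stability, and conclude from the Weyl-orbit structure of $\Phi(G,T)$ together with the fact that the short-root unipotents generate $G$. The only noticeable variation is in showing $T \subsetneq H$: you appeal to the observation in 9.5 that an infinite-order element of $G(K)$ with a nonzero power in $T(K)$ must already lie in $T(K)$, whereas the paper instead exhibits the distinct maximal torus $xTx^{-1}$ inside $H$ using the companion observation from 9.5 that $x$ cannot normalize $T$ --- both are valid and flow from the same discussion.
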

\begin{proof}
As $g$ is a generic element of infinite order, the cyclic group generated by it is Zariski-dense in $T$, and so the cyclic group generated by $xgx^{-1}$ 
is Zariski-dense in the conjugate torus $xTx^{-1}$. Since $x\notin T(K)$ and is of infinite order, it cannot normalize $T$ (see 9.5). Thus $H$ contains at least  two different (generic) 
maximal $K$-tori, namely $T$ and $xTx^{-1}$.  Assume that $H\ne G $. Since $H$ is connected and properly contains
$T$, it must contain a 1-parameter subgroup $U_a$ for some
$a \in \Phi(G , T)$. Then being defined over $K$, $H$ also
contains $U_b$ for all $b$ of the form $b = \sigma(a)$
with $\sigma \in \Ga(K_{T}/K)$. Now since $T$ is
generic, using the fact that the Weyl group $W(G , T)$ acts
transitively on the subsets of roots of same length (cf.\,\cite[Ch.\,VI, Prop.\,11]{Bou}), 
we see that $H$ contains $U_b$ for all roots $b 
\in \Phi(G , T)$ of same length as $a$. If $a$
were a short root then the above remarks would imply that $H = G$,
which is not the case. Thus, $a$ must be long, and therefore
$H$ contains $G^{>}_T$ but does not contain
$U_b$ for any short root $b$. This clearly implies
that $H = G^{>}_T$.
\end{proof}

\vskip2mm

\noindent{\bf Remark 9.11.} It is worth noting that the types with roots of different lengths
are honest exceptions in Theorem \ref{T:thin} in the sense that for
any absolutely almost simple algebraic group $G$ of one of those
types over a finitely generated field $K$ one can find 
two generic elements $\gamma_1 , \gamma_2 \in G(K)$ that generate
$G^{>}_T\neq G$ for a generic maximal $K$-torus $T$. 
To see this, we first pick an arbitrary generic element $\gamma_1
\in G(K)$ of infinite order provided by Theorem \ref{T:Gen2}, and
let $T = Z_G(\gamma_1)$ be the corresponding torus. Since $H :=
G^{>}_T$ is semi-simple, the group $H(K)$ is
Zariski-dense in $H$ (cf.\,\cite[Corollary 18.3]{Borel}). So, there
exists $h \in H(K)$ such that $\gamma_2 := h\gamma_1 h^{-1} \notin
T(K)$. Then $\gamma_2 \in H(K)$ is also generic over $K$, and the
Zariski-closure of the subgroup generated by $\gamma_1$ and
$\gamma_2$ is contained in (in fact, is equal to) $H$.

\section{Some open problems}\label{S:Prob}

The analysis of weak commensurability has led to a number of
interesting problems in the theory of algebraic and Lie groups (cf.,
for example, \S \ref{S:Tori}) and its applications to locally
symmetric spaces, and we would like to conclude this article with a
brief discussion of some of these problems.

According to Theorem \ref{T:WC7}, if two lattices in the
groups of rational points of connected absolutely almost simple groups over a
nondiscrete locally compact field are weakly commensurable and one
of the lattices is arithmetic, then so is the other. At the same time, it has been shown by
means of an example
(cf. \cite[Remark 5.5]{PR-IHES}) that a Zariski-dense subgroup
weakly commensurable to a {\it rank one} arithmetic subgroup need
not be arithmetic. It would be interesting, however, to understand
what happens with  {\it higher rank} $S$-arithmetic subgroups.

\vskip2mm

\noindent {\bf Problem \ref{S:Prob}.1.} {\it Let $G_1$ and $G_2$ be
two connected absolutely almost simple algebraic groups defined over
a field $F$ of characteristic zero, and let $\Gamma_1 \subset
G_1(F)$ be a Zariski-dense $(K , S)$-arithmetic subgroup whose
$S$-rank\footnotemark is $\geqslant 2$. If $\Gamma_2 \subset G_2(F)$
is a Zariski-dense subgroup weakly commensurable to $\Gamma_1$, then
is $\Gamma_2$ necessarily $S$-arithmetic?}

\footnotetext{We recall that if $\Gamma$ is $(\sG, K,
S)$-arithmetic, then the $S$-rank of $\Gamma$ is defined to be
$ \sum_{v \in S} \mathrm{rk}_{K_v}\: \sG$, where
$\mathrm{rk}_F \: \sG$ denotes the rank of $\sG$ over a field $F
\supset K$.}

\vskip2mm  This problem appears to be very challenging; the answer
is not known even in the cases where $\Gamma_1$ is $\mathrm{SL}_3(\Z)$ or
$\mathrm{SL}_2(\Z[1/p])$. One should probably start by considering Problem
\ref{S:Prob}.1 in a more specialized situation, e.g., assuming that
$F$ is a nondiscrete locally compact field, $\Gamma_1 \subset
G_1(F)$ is a {\it discrete} Zariski-dense (higher rank)
$S$-arithmetic subgroup, and $\Gamma_2 \subset G_2(F)$ is a
(finitely generated) {\it discrete} Zariski-dense subgroup weakly
commensurable to $\Gamma_1$ (these restrictions would eliminate
$\mathrm{SL}_2(\Z[1/p])$ as a possibility for $\Gamma_1$, but many
interesting groups such as $\mathrm{SL}_3(\Z)$ would still be included). The
nature of these assumptions brings up another question of
independent interest.

\vskip2mm

\noindent {\bf Problem \ref{S:Prob}.2.} {\it Let $G_1$ and $G_2$ be
connected absolutely almost simple algebraic groups over a
nondiscrete locally compact field $F$, and let $\Gamma_i$ be a
finitely generated Zariski-dense subgroup of $G_i(F)$ for $i = 1,
2$. Assume that $\Gamma_1$ and $\Gamma_2$ are weakly commensurable.
Does the discreteness of $\Gamma_1$ imply the discreteness of
$\Gamma_2$?}

\vskip2mm

An affirmative answer to Problem \ref{S:Prob}.2 was given in
\cite[Propisition 5.6]{PR-IHES} for the case where $F$ is a {\it
nonarchimedean} local field, but the case $F = \R$ or $\C$ remains
open. Another interesting question is whether weak commensurability
preserves cocompactness of lattices.

\vskip2mm

\noindent {\bf Problem \ref{S:Prob}.3.} {\it Let $G_1$ and $G_2$ be
connected absolutely almost simple algebraic groups over $F = \R$ or
$\C$, and let $\Gamma_i \subset G_i(F)$ be a lattice for $i = 1, 2$.
Assume that $\Gamma_1$ and $\Gamma_2$ are weakly commensurable. Does
the compactness of $G_1(F)/\Gamma_1$ imply the compactness of
$G_2(F)/\Gamma_2$?\footnotemark}

\footnotetext{It is well-known that for a semi-simple algebraic
group $G$ over a nondiscrete nonarchimedean locally compact field
$F$ of characteristic zero and a discrete subgroup $\Gamma \subset
G(F)$, the quotient $G(F)/\Gamma$ has finite measure if and only if
it is compact, so the problem in this case becomes vacuous.}

\vskip2mm

We recall that the co-compactness of a lattice in a semi-simple real
Lie group is equivalent to the absence of nontrivial unipotents in
it (cf.\,\cite[Corollary 11.13]{Rag-book}). So, Problem
\ref{S:Prob}.3 can be rephrased as the question whether for two
weakly commensurable lattices $\Gamma_1$ and $\Gamma_2$, the
existence of nontrivial unipotent elements in one of them implies
their existence in the other; in this form the question is
meaningful for arbitrary Zariski-dense subgroups (not necessarily
discrete or of finite covolume). The combination of Theorems
\ref{T:WC6} and \ref{T:WC7} implies the affirmative answer to
Problem \ref{S:Prob}.3 in the case where one of the lattices is
arithmetic, but no other cases have been considered so far.

\vskip2mm

From the general perspective, one important problem is to try to
generalize our results on length-commensurable and/or isospectral
arithmetically defined locally symmetric spaces of absolutely simple
real Lie groups to arithmetically defined locally symmetric spaces
of arbitrary semi-simple Lie groups, or at least those of
$\R$-simple Lie groups. To highlight the difficulty, we will make
some comments about the latter case. An $\R$-simple adjoint group
$G$ can be written in the form $G = \mathrm{R}_{\C/\R}(H)$
(restriction of scalars) where $H$ is an absolutely simple complex
algebraic group. Arithmetic lattices in $G(\R) \simeq H(\C)$ come
from the forms of $H$ over a number field admitting exactly one
complex embedding. The analysis of weak commensurability of even
arithmetic lattices $\Gamma_1 \subset G_1(\R)$ and $\Gamma_2 \subset
G_2(\R)$, where $G_i = \mathrm{R}_{\C/\R}(H_i)$ for $i = 1, 2$,
cannot be implemented via the study of the forms of the $G_i$'s,
forcing us to study directly the forms of the $H_i$'s. But the
relation of weak commensurability of semi-simple elements $\gamma_1
\in \Gamma_1$ and $\gamma_2 \in \Gamma_2$ in terms of $G_1$ and
$G_2$, i.e. the fact that $\chi_1(\gamma_1) = \chi_2(\gamma_2) \neq
1$ for some characters $\chi_i$ of maximal $\R$-tori $T_i$ of $G_i$
such that $\gamma_i \in T_i(\R)$, translates into a significantly
more complicated relation in terms of $H_1$ and $H_2$. Indeed, pick
maximal $\C$-tori $S_i$ of $H_i$ so that $T_i =
\mathrm{R}_{\C/\R}(S_i)$, and let $\delta_i \in S_i(\C)$ be the
element corresponding to $\gamma_i$ under the identification
$T_i(\R) \simeq S_i(\C)$. Then there exist characters $\chi'_i ,
\chi''_i$ of $S_i$ such that $\chi_i(\gamma_i) = \chi'_i(\delta_i)
\overline{\chi''_i(\delta_i)}$. So, the relation of weak
commensurability of $\gamma_1$ and $\gamma_2$ assumes the following
form in terms of $\delta_1$ and $\delta_2$:
$$
\chi'_1(\delta_1) \overline{\chi''_1(\delta_1)} = \chi'_2(\delta_2)
\overline{\chi''_2(\delta_2)}.
$$
It is not  clear  if this type of relation would lead to
the results similar to those we described in this article for the
weakly commensurable arithmetic subgroups of absolutely almost
simple groups. So, the general problem at this stage is to formulate
for general semi-simple groups (or at least $\R$-simple groups) the
``right" notion of weak commensurability and explore its
consequences. We will now formulate a particular case of this
general program that would be interesting for geometric
applications.

\vskip2mm

\noindent {\bf Problem \ref{S:Prob}.4.} {\it Let $G_1$ and $G_2$ be
almost simple complex algebraic groups. Two semisimple elements
$\gamma_i \in G_i(\C)$ are called $\R$-weakly commensurable if there
exist complex maximal tori $T_i$ of $G_i$ for $i = 1, 2$ such that
$\gamma_i \in T_i(\C)$ and for suitable characters $\chi_i$ of $T_i$
we have
$$
\vert \chi_1(\gamma_1) \vert = \vert \chi_2(\gamma_2) \vert \neq 1.
$$
Furthermore, Zariski-dense (discrete) subgroups $\Gamma_i \subset
G_i(\C)$ are $\R$-weakly commensurable if every semisimple element
$\gamma_1 \in \Gamma_1$ of infinite order is $\R$-weakly
commensurable to some semisimple element $\gamma_2 \in \Gamma_2$ of
infinite order, and vice versa. Under what conditions does the
$\R$-weak commensurability of Zariski-dense (arithmetic) lattices
$\Gamma_i \subset G_i(\C)$ $(i = 1, 2)$ imply their
commensurability?}

\vskip2mm

The result of \cite{CHLR} seems to imply that the $\R$-weak
commensurability of arithmetic lattices in $\mathrm{SL}_2(\C)$ does
imply their commensurability, but no other results in this direction
are available.

\vskip3mm

Turning now to the geometric aspect, we would like to reiterate that
most of our results deal with the analysis of the new relation of
length-commensurability, which eventually implies the results about
isospectral locally symmetric spaces. At the same time, the general
consequences of isospectrality and iso-length-spectrality are much
better understood than those of length-commensurability. So, as an
overarching problem, we would like to propose the following.

\vskip2mm

\noindent {\bf Problem \ref{S:Prob}.5.} {\it Understand consequences
(qualitative and quantitative) of length-com- mensurability for
locally symmetric spaces.}

\vskip2mm

(Here by {\it quantitative consequences} we mean results stating
that in certain situations a family of length-commensurable locally
symmetric spaces consists either of a single commensurability class
or of a certain bounded number of commensurability classes, and by
{\it qualitative consequences} - results guaranteeing that the
number of commensurability classes in a given class of
length-commensurable locally symmetric spaces is finite.)

\vskip2mm

There are various concrete questions within the framework provided
by Problem \ref{S:Prob}.5 that were resolved in \cite{PR-IHES} for
arithmetically defined locally symmetric spaces but remain open for
locally symmetric spaces which are not arithmetically defined. For example, according to Theorem
\ref{T:WC10}, if $\fX_{\Gamma_1}$ and $\fX_{\Gamma_2}$ are
length-commensurable and at least one of the spaces is
arithmetically defined, then the compactness of one of them implies
the compactness of the other. It is natural to ask if this can be proved for locally symmetric
spaces which are not arithmetically defined.

\vskip2mm

\noindent {\bf Problem \ref{S:Prob}.6.} (Geometric version of
Problem \ref{S:Prob}.3) {\it Let $\fX_{\Gamma_1}$ and
$\fX_{\Gamma_2}$ be length-commensurable locally symmetric spaces of
finite volume. Does the compactness of $\fX_{\Gamma_1}$ always imply
the compactness of $\fX_{\Gamma_2}$?}

\vskip2mm

In \cite[\S9]{PR-IHES}, for each of the exceptional types $\textsf{A}_{n}$, $\textsf{D}_{2n
+ 1}$ $(n > 1)$ and $\textsf{E}_6$, we have constructed  examples of length-commensurable,
but not commensurable,
compact arithmetically defined locally symmetric spaces associated
with a simple real algebraic group of this type. It would be
interesting to see if this construction can be refined to  provide
examples of iso-length spectral or even isospectral compact
arithmetically defined locally symmetric spaces that are not
commensurable.

\vskip2mm

\noindent {\bf Problem \ref{S:Prob}.7.} {\it For each of the types
$\textsf{A}_{n}$ $(n > 1)$, $\textsf{D}_{2n + 1}$ $(n > 1)$ and
$\textsf{E}_6$, construct examples of isospectral compact
arithmetically defined locally symmetric spaces that are not
commensurable.}

\vskip2mm

Currently, such a construction is known only for {\it inner forms}
of type $\textsf{A}_{n}$ (cf.\,\cite{LSV}); it relies on some
delicate results from the theory of automorphic forms \cite{HT}, the
analogues of which are not yet available for groups of other types.

\vskip2mm

As we already mentioned, in \cite{PR-IHES} we focused on the case
where $G_1$ and $G_2$ are absolutely (almost) simple real algebraic
groups. From the geometric perspective, however, it would be
desirable to consider a more general situation where $G_1$ and $G_2$
are allowed to be either arbitrary real semi-simple groups (without
compact factors), or at least arbitrary $\R$-simple groups. This
problem is intimately related to the problem, discussed above, of
generalizing our results on weak commensurability from absolutely
almost simple to arbitrary semi-simple groups. In particular, a
successful resolution of Problem \ref{S:Prob}.4 would enable us to
extend our results to the (arithmetically defined) locally symmetric
spaces associated with $\R$-simple groups providing thereby a
significant generalization of the result of \cite{CHLR} where the
case $G_1 = G_2 = \mathrm{R}_{\C/\R}(\mathrm{SL}_2)$ (that leads to
arithmetically defined hyperbolic 3-manifolds) was considered.

\vskip2mm

Finally, the proof of the result that connects the
length-commensurability of $\fX_{\Gamma_1}$ and $\fX_{\Gamma_2}$ to
the weak commensurability of $\Gamma_1$ and $\Gamma_2$ relies (at
least in the higher rank case) on Schanuel's conjecture. It would be
interesting to see if our geometric results can be made
independent of Schanuel's conjecture.

\vskip3mm

\noindent {\small {\bf Acknowledgements.} Both authors were
partially supported by the NSF (grants DMS-1001748  and DMS-0965758)
and the Humboldt Foundation. The first-named author thanks the
Institute for Advanced Study (Princeton) for its hospitality and
support during 2012. The hospitality of the MSRI during the workshop
``Hot Topics: Thin Groups and Super-strong Approximation" (February
6-10, 2012) is also thankfully acknowledged. We thank Skip Garibaldi
and Igor Rapinchuk for carefully reading the manuscript and for
their comments and corrections.}

\vskip5mm

\bibliographystyle{amsplain}

\end{document}